% This is LLNCS.DEM the demonstration file of
% the LaTeX macro package from Springer-Verlag
% for Lecture Notes in Computer Science,
% version 2.4 for LaTeX2e as of 16. April 2010
%
\documentclass[11pt]{article}

\usepackage{makeidx}  % allows for indexgeneration
\usepackage[english]{babel}
\usepackage{amssymb,amsmath,enumerate,a4}
\usepackage{latexsym}
\usepackage{graphicx}
\usepackage{epsfig}
\usepackage{pstricks}
\usepackage{pst-node}
\usepackage{colortab}
\usepackage{color}
\usepackage{framed}

\newcommand{\conp}{\tt coNP}
\newcommand{\np}{\tt NP}

\newcommand{\vRLF}{\tt vRLF}
\newcommand{\IPRI}{\tt IPRI}
\newcommand{\RCGR}{\tt RCGR}
\newcommand{\IF}{\tt IF}
\newcommand{\vol}{\mathrm{vol}}
\newcommand{\bb}{\mathbb}
\newcommand{\R}{\bb R}
\newcommand{\Q}{\bb Q}
\newcommand{\Z}{\bb Z}
\newcommand{\N}{\bb N}
\newcommand{\floor}[1]{\left\lfloor#1\right\rfloor}

\newcommand{\conv}{\operatorname{conv}}

\newcommand{\cone}{\operatorname{cone}}
\newcommand{\reccone}{\operatorname{rec}}
\newcommand{\intcone}{\operatorname{int{.}cone}}
\newcommand{\linhull}[1]{\langle #1 \rangle}
\newcommand{\affhull}{\operatorname{aff}}
\newcommand{\interior}{\operatorname{int}}
\newcommand{\bd}{\operatorname{bd}}
\newcommand{\aff}{\operatorname{aff}}
\newcommand{\dime}{\operatorname{dim}}
\newcommand{\sm}{\setminus}

\newcommand{\relint}{\operatorname{relint}}

\newtheorem{Prop}{Proposition}
\newtheorem{Obs}[Prop]{Observation}
\newtheorem{lem}[Prop]{Lemma}

\newtheorem{thm}[Prop]{Theorem}

\newtheorem{cl}{Claim}
\newtheorem{Rm}{Remark}

\newenvironment{proof}{\begin{trivlist} \item[] {\em Proof.}}
{\hspace*{\stretch{1}} $\Box$ \end{trivlist}}

\newenvironment{cpf}{\begin{trivlist} \item[] {\em Proof.}}
{\hspace*{\stretch{1}} $\diamond$ \end{trivlist}}

\title{Reverse Chv\'atal-Gomory rank\thanks{This work was supported by the {\em Progetto di Eccellenza 2008--2009} of the {\em Fondazione Cassa di Risparmio di Padova e Rovigo}. A previous version has appeared in the proceedings of IPCO 2013.}}

\author{Michele Conforti\thanks{Dipartimento di Matematica, Universit\`a degli Studi di Padova, Italy.}
\and Alberto Del Pia\thanks{Department of Industrial and Systems Engineering $\&$ Wisconsin Institute for Discovery, University of Wisconsin-Madison, United States, {\tt delpia@wisc.edu}.}
\and Marco Di Summa\footnotemark[2]
\and Yuri Faenza\thanks{DISOPT, Institut de math\'ematiques
d'analyse et applications, EPFL Switzerland, {\tt yuri.faenza@epfl.ch}. Supported by the German Research Foundation (DFG) as part of the priority program  “SPP 1307: Algorithm Engineering.}
\and Roland Grappe\thanks{Laboratoire d'Informatique, Universit\'e Paris-Nord, France, {\tt roland.grappe@lipn.univ-paris13.fr}.}}

\begin{document}

\maketitle

\begin{abstract}
We introduce the \emph{reverse Chv\'atal-Gomory rank} $r^*(P)$ of an integral polyhedron $P$, defined as the supremum of the Chv\'atal-Gomory ranks of all rational polyhedra whose integer hull is $P$. A well-known example in dimension two shows that there exist integral polytopes $P$ with $r^*(P)=+\infty$. We provide a geometric characterization of polyhedra with this property in every dimension, and investigate upper bounds on $r^*(P)$ when this value is finite. \\[4pt]
{\bf Key words.} Chv\'atal-Gomory closure, Chv\'atal rank, cutting plane, integral polyhedron\\[4pt]
{\bf AMS subject classification.} 90C10, 52B20, 52C07
\end{abstract}

\section{Introduction}\label{sec:intro}

A polyhedron is \emph{integral} if it is the convex hull of its integer points. Given an integral polyhedron $P\subseteq\R^n$, a \emph{relaxation} of $P$ is a rational polyhedron $Q \subseteq \R^n$ such that $Q\cap\Z^n=P\cap\Z^n$. Note that if $Q$ is a relaxation of $P$, then $P=\conv(Q\cap\Z^n)$, i.e., $P$ is the {\em integer hull} of $Q$, where we denote the convex hull of a set $S$ by $\conv(S)$ (for the definition of convex hull and other standard preliminary notions not given in here, we refer the reader to textbooks, e.g.~\cite{Grun} and~\cite{sch}). An inequality $cx\le\floor{\delta}$ is a \emph{Chv\'atal--Gomory inequality} (\emph{CG inequality} for short) for a polyhedron $Q\subseteq \R^n$ if $c$ is an integer vector and $cx\le\delta$ is valid for $Q$. Note that $cx\le\floor\delta$ is a valid inequality for $Q\cap\Z^n$. The \emph{CG closure} $Q'$ of $Q$ is the set of points in $Q$ that satisfy all the CG inequalities for $Q$. If $Q$ is a rational polyhedron, then $Q'$ is again a rational polyhedron~\cite{sch80}. For $p\in\N$, the {\em $p$-th CG closure} $Q^{(p)}$ of $Q$ is defined iteratively as $Q^{(p)}=(Q^{(p-1)})'$, with $Q^{(0)}=Q$. If $Q$ is a rational polyhedron, then there exists some $p \in \N$ such that $Q^{(p)}=\conv(Q\cap\Z^n)$~\cite{sch80}. The minimum $p$ for which this occurs is called the \emph{CG rank} of $Q$ and is denoted by $r(Q)$.

Cutting plane procedures in general and CG inequalities in particular are of crucial importance to the integer programming community, because of their convergence properties (see e.g.~\cite{Gom,sch}) and relevance in practical applications (see e.g.~\cite{cplex}). Hence, a theoretical understanding of their features has been the goal of several papers from the literature. Many of them aimed at giving upper or lower bounds on the CG rank for some families of polyhedra. For instance, Bockmayr et al.~\cite{Bock} proved that the CG rank of a polytope $Q\subseteq[0,1]^n$ is at most $O(n^3\log n)$. The bound was later improved to $O(n^2\log n)$ by Eisenbrand and Schulz~\cite{EiSc}. Recently, Rothvo\ss{} and Sanit\`a~\cite{RothSan}, improving over earlier results of Eisenbrand and Schulz~\cite{EiSc} and Pokutta and Stauffer~\cite{PoSt}, showed that this bound is almost tight, as there are polytopes in the unit cube whose CG rank is at least $\Omega(n^2)$. An upper bound on the CG rank for polytopes contained in the cube $[0,\ell]^n$ for an arbitrary given $\ell$ was provided by Li~\cite{Li}. Recently, Averkov et al.~\cite{non-full-dim} studied the rate of convergence -- in terms of number of iterations of the CG closure operator -- of the affine hull of a rational polyhedron to the affine hull of its integer hull.

\medskip

{\bf Our contribution.} In this paper we investigate a question that is, in a sense, reverse to that of giving bounds on the CG rank for a fixed polyhedron $Q$. In fact, in most applications, even if we do not have a complete linear description of the integer hull $P$, we know many of its properties: for instance, the integer points of most polyhedra
stemming from combinatorial optimization problems have 0-1 coordinates. Hence, for a fixed \emph{integral} polyhedron $P$, we may want to know how ``bad'' a relaxation of $P$ can be in terms of its CG rank. More formally, we want to answer the following question: given an integral polyhedron $P$, what is the supremum of $r(Q)$ over all rational polyhedra $Q$ whose integer hull is $P$? We call this number the \emph{reverse CG rank} of $P$ and denote it by $r^*(P)$:
\[r^*(P)=\sup\{r(Q):\mbox{$Q$ is a relaxation of $P$}\}.\]
Note that $r^*(P) < +\infty$ if and only if there exists $p \in \N$ such that $r(Q) \le p$ for every relaxation $Q$ of $P$.
Our main result gives a geometric characterization of those integral polyhedra $P$ for which $r^*(P)=+\infty$. Recall that the \emph{recession cone} of a polyhedron $P$ is the set of vectors $v$ such that $x+ \alpha v \in P$ for each $x \in P$ and $\alpha \in \R_+$. Denoting by $\reccone(P)$ the recession cone of $P$, by $\linhull v$ the line generated by a non-zero vector $v$, and by $+$ the Minkowski sum of two sets, we prove the following:

\begin{thm}\label{th:main}
Let $P\subseteq\R^n$ be an integral polyhedron.
Then $r^*(P) = +\infty$ if and only if $P$ is non-empty and there exists $v \in \Z^n \sm \reccone (P)$ such that $P + \linhull v$ does not contain any integer point in its relative interior.
\end{thm}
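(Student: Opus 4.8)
The plan is to prove the two implications separately, after a couple of reductions. First, note that every relaxation $Q$ of $P$ has the same recession cone as $P$: the inclusion $\reccone(Q)\supseteq\reccone(P)$ holds because $P\subseteq Q$, and conversely, since $\reccone(Q)$ is rational it suffices to take $v\in\reccone(Q)\cap\Z^n$ and a point $z\in P\cap\Z^n$ (which exists as $P\neq\emptyset$) and observe that $z+\alpha v\in Q\cap\Z^n=P\cap\Z^n\subseteq P$ for every $\alpha\in\N$, so $v\in\reccone(P)$. Second, when a candidate vector $v$ is given we may assume it primitive, and then, after a unimodular transformation, that $v=e_1$; writing $\pi\colon\R^n\to\R^{n-1}$ for the projection forgetting the first coordinate, $\pi(P)$ is an integral polyhedron, $P+\linhull{e_1}=\pi^{-1}(\pi(P))$, and the condition ``$P+\linhull v$ has no integer point in its relative interior'' becomes ``$\relint\pi(P)\cap\Z^{n-1}=\emptyset$''.

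For the ``if'' direction, suppose $e_1\in\Z^n\sm\reccone(P)$ satisfies $\relint\pi(P)\cap\Z^{n-1}=\emptyset$. Fix a rational $\bar p\in\relint(P)$ and put $Q_k:=\conv(P\cup\{\bar p+ke_1,\,\bar p-ke_1\})$ for $k\in\N$, a rational polyhedron. This is a relaxation of $P$: a point of $Q_k$ is a convex combination of a point of $P$ with the two added points, so a point $x\in Q_k\sm P$ equals $q+te_1$ where $q$ is a convex combination of points of $P$ carrying positive weight on $\bar p$, hence $q\in\relint(P)$; thus $x\in\relint(P)+\linhull{e_1}=\relint(P+\linhull{e_1})$, which by hypothesis contains no integer point, so $Q_k\cap\Z^n=P\cap\Z^n$. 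To bound $r(Q_k)$ from below, choose $c^*\in\Z^n$ with $c^*e_1>0$ and $c^*w\le0$ for all $w\in\reccone(P)$ — which exists, and may be taken integral, precisely because $e_1\notin\reccone(P)$ — and set $\gamma:=c^*e_1>0$ and $N(R):=\sup\{c^*x:x\in R\}$.

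The key estimate is: \emph{for every rational polyhedron $R$ with $P\subseteq R\subseteq P+\linhull{e_1}$ and $\reccone(R)\subseteq\reccone(P)$ one has $N(R')\ge N(R)-\gamma$.} Since $\pi(R)=\pi(P)$ and $e_1\notin\reccone(R)$, we may write $R=\{(t,y):y\in\pi(P),\ \underline{\ell}(y)\le t\le\overline{\ell}(y)\}$ with $\overline{\ell}$ a concave piecewise-linear ceiling; moreover $N(R)<\infty$ and is attained at $(\overline{\ell}(\hat y),\hat y)$ for some $\hat y\in\pi(P)$. A CG inequality of $R$ whose coefficient on the first variable is $0$ is already valid for $R$, as its right-hand side may be taken equal to $\max\{d'y:y\in\pi(P)\}\in\Z$ ($\pi(P)$ being integral); so only CG cuts $d_1t+d'y\le\floor{\delta_d}$ with integer $d_1\ge1$ can lower the ceiling, and each imposes $t\le(\floor{\delta_d}-d'y)/d_1$, where $\delta_d:=\max_R(d_1t+d'y)\ge d_1\overline{\ell}(\hat y)+d'\hat y$. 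Hence at $y=\hat y$ this bound is at least $\overline{\ell}(\hat y)-1/d_1\ge\overline{\ell}(\hat y)-1$, so the ceiling of $R'$ at $\hat y$ is still at least $\overline{\ell}(\hat y)-1$, and evaluating $c^*$ at the corresponding point of $R'$ gives $N(R')\ge N(R)-\gamma$ since $c^*e_1>0$. Applying this to $R=Q_k^{(p)}$ (all of which satisfy the hypotheses, being sandwiched between $P$ and $Q_k\subseteq P+\linhull{e_1}$ with recession cone inside $\reccone(P)$), we get $N(Q_k^{(p)})\ge N(Q_k)-\gamma p\ge c^*\bar p+(k-p)\gamma$, whereas $N(P)$ is a fixed finite number; so $Q_k^{(p)}\neq P$ whenever $(k-p)\gamma>N(P)-c^*\bar p$, hence $r(Q_k)\to\infty$ and $r^*(P)=+\infty$.

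For the ``only if'' direction we prove the contrapositive: if $P=\emptyset$, or $P\neq\emptyset$ and $\relint(P+\linhull v)\cap\Z^n\neq\emptyset$ for every $v\in\Z^n\sm\reccone(P)$, then $r^*(P)<\infty$. When $P=\emptyset$ this amounts to bounding the CG rank of an integer-point-free rational polyhedron by a function of $n$ only: induct on $n$, using the Flatness Theorem (a bounded such polyhedron has lattice width $O_n(1)$, so one round of CG cuts confines it to $O_n(1)$ parallel lattice hyperplanes, on each of which one recurses) and the fact that projecting along a primitive lattice vector in the recession cone keeps the polyhedron integer-point-free and preserves its CG rank (so one may assume it bounded). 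When $P\neq\emptyset$ and no witness $v$ exists, induct on $\dim(P)$: if $\reccone(P)$ contains a primitive lattice vector $r$, the projection $\pi_r(P)$ modulo $\linhull r$ is integral, still has no witness (a witness for $\pi_r(P)$ lifts to one for $P$), and projecting along $r$ preserves the CG rank of every relaxation, so we may assume $\reccone(P)$ pointed; then the no-witness hypothesis should force every relaxation $Q$ of $P$ to lie in $P+B$ for a bounded set $B$ depending only on $P$ — otherwise $Q$ extends unboundedly in some rational direction $v$, either within bounded transverse width (so $Q$ has small lattice width and the problem drops in dimension) or with unbounded transverse width (which would place an integer point of $\relint(P+\linhull v)$ in $Q\sm P$, contradicting that $Q$ is a relaxation) — and relaxations confined to the fixed region $P+B$ have CG rank bounded in terms of $n$ and the size of $P$ by the box bound of~\cite{Li}. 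I expect this last step — making the ``thin versus thick'' dichotomy for unbounded extensions precise and extracting a uniform bound from it — to be the main obstacle, as it is exactly the phenomenon the theorem describes, viewed from the opposite side.
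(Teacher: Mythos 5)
Your ``if'' direction is essentially sound and takes a genuinely different route from the paper (which invokes the Chv\'atal--Cook--Hartmann lemma, Lemma~\ref{lem:iterat-cc}, on the points $\bar x+\alpha v - j v$): you instead prove directly that one CG round lowers $\max\{c^*x\}$ by at most $\gamma=c^*e_1$. But as written the key estimate has an oversight: you only control cuts with $d_1\ge 1$ and $d_1=0$, and you never check that the surviving point $(\overline{\ell}(\hat y)-1,\hat y)$ lies in $R$ or satisfies cuts with $d_1\le -1$; if the fiber of $R$ over $\hat y$ is short, the argument collapses. This is patchable: since $\pi(R)=\pi(P)$, the fiber over $\hat y$ contains a point of $P$, whose $c^*$-value is at most $N(P)$, so its length is at least $(N(R)-N(P))/\gamma$; hence the estimate holds whenever $N(R)\ge N(P)+2\gamma$ (each negative cut, evaluated at the bottom of the fiber, excludes at most one unit from below), and that weaker statement still gives $r(Q_k)\to\infty$. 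Also note that $\conv(P\cup\{\bar p\pm ke_1\})$ need not be closed when $P$ is unbounded; take $\conv(V\cup\{\bar p\pm ke_1\})+\reccone(P)$ as in the paper.

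The ``only if'' direction, however, has genuine gaps, and they sit exactly where the paper does its real work (Proposition~\ref{prop:full-dim-nec}, $(2)\Rightarrow(3)$, and Theorem~\ref{th:main-non-full-dim}). First, your reduction to pointed recession cone rests on the claim that projecting along a lattice vector $r\in\reccone(P)$ ``preserves the CG rank of every relaxation''; CG closure does not commute with such a projection, and you give no argument that $r(Q)$ is bounded in terms of $r(\pi_r(Q))$. Second, the claim ``no witness forces every relaxation to lie in $P+B$'' is simply false when $P$ is not full-dimensional: for $P=\{0\}\subseteq\R^2$ there is no witness $v$, yet the segments $\conv\{(0,0),(M,1/2)\}$ are relaxations escaping every bounded region; the paper handles low dimension not by confinement but by Theorem~\ref{th:main-non-full-dim} (after $f(n)$ rounds every relaxation is inside $\aff(P)$). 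Third, for full-dimensional $P$ the confinement statement is true but is precisely the hard implication, and your ``thin versus thick'' dichotomy does not prove it: if relaxations $Q_k$ reach farther and farther out, you must first extract a limit direction $v$ (which need not be rational, so an extra argument --- in the paper, the characterization of maximal lattice-free convex sets --- is needed to replace it by an integral one), and, more importantly, ``unbounded transverse width'' does not by itself place an integer point of $\relint(P+\linhull v)$ inside $Q_k$: the integer points guaranteed by the no-witness hypothesis could lie anywhere in the cylinder, and showing that some of them are actually captured by $Q_k$ is the content of the paper's Dirichlet-approximation and cone-of-revolution argument (Claims~\ref{cl:cos-theta} and~\ref{cl:final-full}). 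You flag this step yourself as the main obstacle; it is not a technicality but the core of the theorem, so the proposal does not yet constitute a proof of necessity.
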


\smallskip

Theorem \ref{th:main} can also be interpreted in terms of proof systems for integer programming. As the CG procedure always terminates in finite time, CG cuts provide one such proof system. One of the main aims of this research area is to understand what are the ``obstacles'' on the way to determination of the integer hull using those proof systems (see e.g.~\cite{BaCoMa,DelP}). In our context, this boils down to highlighting the features of a given integral polyhedron which make it difficult to be ``proved'' using CG cuts. Theorem \ref{th:main} gives an exact description of those obstacles, characterizing when the finiteness of the CG procedure is an intrinsic property of the integer hull rather than a property of one of its infinitely-many relaxations.

\smallskip

Let us illustrate Theorem~\ref{th:main} with an example in dimension two. Let $P=\conv\{(0,0),(0,1)\}$, and consider the family $\{Q_t\}_{t \in \N}$ of relaxations of $P$, where we define $Q_t=\conv\{(0,0),(0,1),(t,1/2)\}$. It is folklore that the CG rank of $Q_t$ increases linearly with $t$ (see Figure \ref{fig:2-dim}). This implies that $r^*(P)=+\infty$. Note that if one chooses $v=(1,0)$, then $P+\linhull v$ does not contain any integer point in its (relative) interior. A simple application of Theorem~\ref{th:main} shows that the previous example can be generalized to every dimension: any 0-1 polytope $P\subseteq \R^n$, $n\geq 2$, whose dimension is at least $1$, has infinite reverse CG rank, since there always exists a vector $v$ parallel to one of the axis such that $P+\linhull v$ does not contain any integer point in its relative interior. On the other hand, every integral polyhedron containing an integer point in its relative interior (e.g. one with full-dimensional recession cone) has finite reverse CG rank, as no vector $v$ satisfying the condition of Theorem~\ref{th:main} exists in this case. However, there are also integral polyhedra with finite reverse CG rank that do not contain integer points in their interior, such as $\conv\{(0,0),(2,0),(0,2)\}\subseteq\R^2$.

We then show that for a wide class of polyhedra with finite reverse CG rank, $r^*$ can be upper bounded by functions depending only on parameters such as the dimension of the space and the number of the integer points in the relative interior of $P$. Moreover, we give examples showing that $r^*$ of those polyhedra grows with those parameters.

Last, we investigate some algorithmic issues. In particular, we show that the problem ``does an integral polyhedron $P$ have finite reverse CG rank?'' can be decided in finite time, and in polynomial time if the dimension is fixed.

Results of this paper are proved combining classical tools from cutting plane theory (e.g. the lower bound on the CG rank of a polyhedron by Chv\'atal, Cook, and Hartmann~\cite{ChCoHa}, see Lemma \ref{lem:iterat-cc}) with geometric techniques that are not usually
applied to the theory of CG cuts, mostly from geometry of numbers (such as the characterization of maximal lattice-free convex sets~\cite{BasuDirichelet}, or Minkowski's Convex Body Theorem).

\begin{figure}
\center
% Generated with LaTeXDraw 2.0.8
% Wed May 30 17:50:26 CEST 2012
% \usepackage[usenames,dvipsnames]{pstricks}
% \usepackage{epsfig}
% \usepackage{pst-grad} % For gradients
% \usepackage{pst-plot} % For axes
\scalebox{.8} % Change this value to rescale the drawing.
{
\begin{pspicture}(0,-2.12)(9.72,2.12)
\definecolor{color738b}{rgb}{0.8,0.8,0.8}
\definecolor{color739b}{rgb}{0.6,0.6,0.6}
\definecolor{color740b}{rgb}{0.4,0.4,0.4}
\psline[linewidth=0.04cm,linestyle=dashed,dash=0.16cm 0.16cm,arrowsize=0.05291667cm 2.0,arrowlength=1.4,arrowinset=0.4]{<-}(1.0,2.1)(1.0,-2.1)
\psline[linewidth=0.04cm,linestyle=dashed,dash=0.16cm 0.16cm,arrowsize=0.05291667cm 2.0,arrowlength=1.4,arrowinset=0.4]{->}(0.0,-0.9)(9.7,-0.88)
\pspolygon[linewidth=0.04,fillstyle=solid,opacity=0.5,fillcolor=color738b](1.0,1.16)(1.0,-0.84)(6.9,0.08)
\pspolygon[linewidth=0.04,fillstyle=solid,opacity=0.5,fillcolor=color739b](0.98,1.12)(0.98,-0.88)(4.96,0.1)
\pspolygon[linewidth=0.04,fillstyle=solid,fillcolor=color740b](0.98,1.14)(0.98,-0.86)(2.92,0.04)
\psdots[dotsize=0.4](1.0,-0.9)
\psdots[dotsize=0.4](1.0,1.1)
\psdots[dotsize=0.4,dotangle=-90.0](3.0,-0.9)
\psdots[dotsize=0.4,dotangle=-90.0](5.0,-0.9)
\psdots[dotsize=0.4,dotangle=-90.0](7.0,-0.9)
\psdots[dotsize=0.4,dotangle=-90.0](9.0,-0.9)
\psdots[dotsize=0.4,dotangle=-90.0](3.0,1.1)
\psdots[dotsize=0.4,dotangle=-90.0](5.0,1.1)
\psdots[dotsize=0.4,dotangle=-90.0](7.0,1.1)
\psdots[dotsize=0.4,dotangle=-90.0](9.0,1.1)
\end{pspicture}
}
\caption{In increasingly lighter shades of grey, polytopes $Q_1$, $Q_2$, and $Q_3$.}\label{fig:2-dim}
\end{figure}
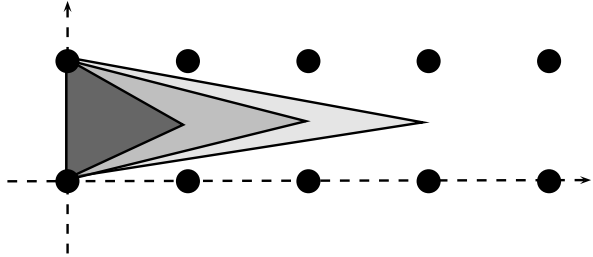

%We then focus on two families of polyhedra with finite reverse CG rank and investigate which parameters their reverse CG rank depends on (Theorem \ref{thr:bounds}).

%We then shift our attention to the stronger family of \emph{splits} cuts. If again we assume that the input polyhedron $Q$ is rational, the split operator share with the CG operator the polyhedrality of the closure and the finite rank (in fact, it is a generalization of the CG operator). Moreover, computational experiments (citazioni...) show that the split operator converges to the integer hull much faster than then CG operator. However, this practical evidence is not mirrored by theoretical results: with the notable exception of the polyhedra contained in the unit cube (cit: Balas), there is no theorem proving that split cuts are more effective than CG cuts. It then makes sense to investigate whether the concept of reverse rank provides such a theoretical evidence, that is, if there exist substantial classes of polyhedra whose reverse CG rank is notably bigger than the analogously defined \emph{reverse split rank}. In the last part of the paper, we initiate this study. We show that the reverse split rank of any integer polyhedron in the cartesian space is at most 2 (recall that Figure \ref{fig:2-dim} provides an example of an integer polygon $P$ with $r^*(P)=+\infty$). On the other hand, we show we cannot hope for a finite reverse split rank for all integer polyhedra, providing a polytope $P\subseteq \R^3$ with infinite reverse split rank.

\medskip

%Recall that a relaxation of an integral polyhedron is a {\em rational} polyhedron by definition. We remark that the rationality assumption is crucial in the statement of Theorem~\ref{th:main}. As an example, consider the polytope $P\subseteq \R^2$ consisting only of the origin. Any line $Q\subseteq \R^2$ passing through the origin and having irrational slope is an (irrational) polyhedron whose integer hull is $P$. One readily verifies that the CG closure of $Q$ is $Q$ itself, showing that in this case the CG closures of $Q$ do not converge to the integer hull $P$. However, no vector $v$ satisfying the conditions of Theorem~\ref{th:main} exists.

%After recalling and stating some auxiliary results in Section~\ref{sec:def}, we prove Theorem~\ref{th:main} (Section~\ref{sec:main}). We then consider two specific families of polyhedra having finite reverse rank and investigate the parameters on which the reverse rank depends on (Section~\ref{sec:cg-more}).

\smallskip

The paper is organized as follows. In Section \ref{sec:def}, we settle notation and definitions, and state some known and new auxiliary lemmas needed in the rest of the paper. In Section \ref{sec:main}, we prove the main result of the paper, that is the geometric characterization of integral polyhedra with infinite reverse CG rank (Theorem \ref{th:main}). In Section \ref{sec:cg-more}, we focus on two classes of polyhedra with finite reverse CG rank and investigate upper bounds on $r^*$ for those classes. Section \ref{sec:algo} is devote to algorithmic issues. We conclude with Section \ref{sec:extens}, where some extensions of the concept of reverse CG rank are examined.

\section{Definitions and tools}\label{sec:def}

Throughout the paper, $n$ will be a strictly positive integer denoting the dimension of
the ambient space. Given a set $S\subseteq\R^n$, we denote by $\intcone(S)$ the set of all linear combinations of vectors in $S$ using nonnegative integer multipliers. Given a closed, convex set $C \subseteq \R^n$, the {\em affine hull} of $C$, denoted $\aff(C)$, is the smallest affine subspace containing $C$. The {\em dimension} of $C$ is the dimension of $\aff(C)$. $C$ is {\em full-dimensional} if its dimension is $n$. We also denote by $\bd(C)$ the boundary of $C$, by $C_I$ the integer hull of $C$, by $\interior(C)$ the interior of $C$, by $\relint(C)$ the relative interior of $C$. We say that $C$ is \emph{lattice-free} if $\interior(C) \cap\Z^n= \emptyset$, and \emph{relatively lattice-free} if $\relint(C)\cap\Z^n=\emptyset$. Note that the relative interior of a single point in $\R^n$ is the point itself. Hence, if it is integer, then it is not relatively lattice-free. Also, note that if $C$ is not lattice-free, then it is full-dimensional. A \emph{convex body} is a closed, convex, bounded set with non-empty interior. A set $C$ is {\em centrally symmetric} with respect to a given point $x\in C$ (or centered at $x$) when, for every $y\in\R^n$, one has $x+y\in C$ if and only if $x-y\in C$.
%\footnote{It has been suggested e.g. in~\cite{nizi} to call those sets \emph{hollow} instead of \emph{lattice-free}, since the latter is used in the literature to denote a different family of convex sets. However, the integer programming community seem to agree on the name \emph{lattice-free}, hence we stick to this.}

By \emph{distance} between two points $x, y \in \R^n$ (resp. a point $x\in\R^n$ and a set $S\subseteq\R^n$) we mean the Euclidean distance, which we denote by $d(x,y)$ (resp. $d(x,S)$). We use the standard notation $\|\cdot\|$ for the Euclidean norm. For $r \in \Q_+$, $x \in \R^n$ and an affine subspace $H\subseteq\R^n$ of dimension $d$, the \emph{$d$-ball} (\emph{of radius $r$ lying on $H$ and centered at $x$}) is the set of points lying on $H$ whose distance from $x$ is at most $r$. When referring to the volume of a $d$-dimensional convex set $C$, denoted $\vol(C)$, we shall always mean its $d$-dimensional volume, that is, the Lebesgue measure with respect to the affine subspace $\aff(C)$ of the
Euclidean space $\R^n$.

\subsection*{Bounds on the CG rank}

We give here upper and lower bounds on the CG rank of polyhedra. The proof of the following two results can be found in~\cite{CCT} and~\cite{non-full-dim} respectively.

\begin{lem}\label{lem:PI=0}
Each rational polyhedron $Q\subseteq \R^n$ with $Q_I=\emptyset$ has CG rank at most $\varphi(n)$, where $\varphi$ is a function depending on $n$ only.
\end{lem}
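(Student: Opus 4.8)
The plan is to prove the statement by induction on $n$, taking $\varphi(0)=0$ (the only rational polyhedron in $\R^0$ with empty integer hull is $\emptyset$). Fix $n\ge 1$ and a non-empty rational polyhedron $Q\subseteq\R^n$ with $Q_I=\emptyset$; in particular $Q\cap\Z^n=\emptyset$. I would split the argument into three cases according to the ``shape'' of $Q$: (i) $Q$ is not full-dimensional; (ii) $Q$ is full-dimensional but unbounded; (iii) $Q$ is a full-dimensional polytope. In each case I reduce to dimension $n-1$, losing only a number of additional CG rounds that depends on $n$ alone, and this produces $\varphi(n)$ from $\varphi(n-1)$.

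\emph{Cases (i) and (ii): reduction to a lower dimension.} If $Q$ is not full-dimensional, then $\aff(Q)$ is a proper rational affine subspace. If $\aff(Q)\cap\Z^n=\emptyset$, then (integer linear algebra applied to a rational description of $\aff(Q)$) there are $a\in\Z^n\sm\{0\}$ and $\beta\notin\Z$ with $ax=\beta$ on $\aff(Q)$; hence $ax\le\floor\beta$ and $-ax\le\floor{-\beta}=-\ceil\beta$ are both CG inequalities for $Q$, so $Q'=\emptyset$. If instead $\aff(Q)$ contains an integer point, a unimodular affine transformation maps $\aff(Q)$ onto a coordinate subspace $\R^{d}\times\{0\}$ ($d=\dime Q\le n-1$) and $\aff(Q)\cap\Z^n$ onto its integer points; since the CG closure commutes with unimodular affine maps and the coordinates outside the subspace are pinned to $0$, the induction hypothesis applies to $Q$ viewed in $\R^{d}$ and gives $Q^{(\varphi(d))}=\emptyset$. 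Now suppose $Q$ is full-dimensional and unbounded. Then $\reccone(Q)$ is a non-trivial rational cone, hence contains a primitive $r\in\Z^n$; let $\pi\colon\R^n\to\R^{n-1}$ be the lattice projection with kernel $\linhull r$ (after completing $r$ to a basis of $\Z^n$, $\pi$ just forgets the $r$-coordinate). Then $\bar Q:=\pi(Q)$ is a rational polyhedron, and if $\bar x\in\bar Q\cap\Z^{n-1}$ we may lift it to $x\in Q$; as $r\in\reccone(Q)$ the ray $x+\R_+ r\subseteq Q$ contains an integer point, contradicting $Q\cap\Z^n=\emptyset$, so $\bar Q_I=\emptyset$. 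Every integer functional valid for $\bar Q$ pulls back through $\pi$ to an integer functional valid for $Q$ with the same right-hand side, so $\pi(Q^{(p)})\subseteq\bar Q^{(p)}$ for all $p$; by induction $\bar Q^{(\varphi(n-1))}=\emptyset$, whence $Q^{(\varphi(n-1))}=\emptyset$.

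\emph{Case (iii): $Q$ a full-dimensional polytope — the core.} Here $Q$ is a convex body with $Q\cap\Z^n=\emptyset$, so the Flatness Theorem yields a primitive $c\in\Z^n\sm\{0\}$ with $\max_{x\in Q}cx-\min_{x\in Q}cx\le f(n)$ for the flatness constant $f(n)$. Thus at most $N(n):=\floor{f(n)}+1$ integers $k$ lie between $\min_Q cx$ and $\max_Q cx$, and for each such $k$ the slice $Q_k:=Q\cap\{cx=k\}$ is a rational polytope with $(Q_k)_I\subseteq Q_I=\emptyset$ that lives in the hyperplane $\{cx=k\}$; since $c$ is primitive, this hyperplane together with its integer points is unimodularly equivalent to $\R^{n-1}$ with $\Z^{n-1}$, so by induction $Q_k$ has CG rank at most $\varphi(n-1)$ \emph{inside} that hyperplane. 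The key step is to transfer this back to $Q$: invoking the results of \cite{CCT} and \cite{non-full-dim} on how the CG closure operator interacts with restriction to a rational hyperplane, there is an $h(n)$ depending only on $n$ with $Q^{(h(n))}\cap\{cx=k\}=\emptyset$ for every integer $k$. Since $Q^{(h(n))}$ is a polytope, $\{cx:x\in Q^{(h(n))}\}$ is then an interval meeting no integer, so $Q^{(h(n))}$ is empty or contained in an open slab $\{j<cx<j+1\}$, $j\in\Z$; in the latter case $\max_{Q^{(h(n))}}cx<j+1$, so $cx\le j$ is a CG inequality for $Q^{(h(n))}$, which together with $cx>j$ forces $Q^{(h(n)+1)}=\emptyset$. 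Taking $\varphi(n):=\max\{\varphi(n-1)+1,\,h(n)+1\}$ completes the induction.

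\emph{Main obstacle.} Cases (i) and (ii) are essentially bookkeeping; the real difficulty is the transfer step in Case (iii): bounding, by a function of $n$ only, how many CG rounds of $Q$ are needed before $Q^{(\cdot)}$ misses all $N(n)$ integer hyperplanes $\{cx=k\}$. A CG inequality for a slice $Q_k$ computed inside its hyperplane is generally \emph{not} a CG inequality for $Q$, because dualizing the equation $cx=k$ yields an inequality valid for $Q$ whose normal vector is typically non-integral; controlling the loss incurred when re-integralizing these normal vectors is precisely the technical content we would borrow from \cite{CCT} and \cite{non-full-dim}.
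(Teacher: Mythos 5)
The paper does not actually prove this lemma: it is quoted from Cook, Coullard and Tur\'an \cite{CCT}, and your overall plan (induction on $n$, disposing first of non-full-dimensional and unbounded polyhedra, then applying the Flatness Theorem to a full-dimensional polytope and slicing along the flat direction) is exactly the architecture of their argument. Your Cases (i) and (ii) are correct as written.

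The gap is the transfer step in Case (iii), and it cannot be outsourced in the form you state it. You want an $h(n)$ with $Q^{(h(n))}\cap\{x:cx=k\}=\emptyset$ \emph{simultaneously} for all integers $k$ in the flat range, and you propose to get it from ``results on how the CG closure operator interacts with restriction to a rational hyperplane.'' No such result holds for an arbitrary slice $Q\cap\{x:cx=k\}$: the only statement of this kind is the face lemma ($F'=Q'\cap F$ when $F$ is a \emph{face} of $Q$, hence $F^{(t)}=Q^{(t)}\cap F$), and the interior slices are not faces of $Q$; a CG cut derived for an interior slice inside its hyperplane need not be recoverable from boundedly many CG rounds of $Q$. (The reference \cite{non-full-dim} concerns convergence of affine hulls and does not supply this either.) The correct mechanism is sequential rather than parallel: one CG round makes $cx\ge l:=\ceil{\min_Q cx}$ valid, so $Q^{(1)}\cap\{x:cx=l\}$ \emph{is} a face of $Q^{(1)}$; by the face lemma and your dimension induction this face is emptied after $\varphi(n-1)$ further rounds; then $\min cx>l$ on the current closure, so one more round yields $cx\ge l+1$ and promotes the next slice to a face; repeating at most $\floor{f(n)}+1$ times leaves a polytope contained in $\{x:cx\ge l+\floor{f(n)}+1\}\cap\{x:cx\le \min_Q cx+f(n)\}=\emptyset$. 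Accordingly your recursion $\varphi(n)=\max\{\varphi(n-1)+1,\,h(n)+1\}$ should be replaced by one of the form $\varphi(n)=1+(\floor{f(n)}+1)(\varphi(n-1)+1)$ (any recursion depending only on $n$ suffices), and the final ``open slab'' paragraph becomes unnecessary. With the face lemma stated and the slices processed in this order, the proof is complete and coincides with the one in \cite{CCT}.
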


\begin{lem}\label{lem:upper}
For every polyhedron $Q \subseteq \R^n$
and for every $a \in \Z^n$ and $\delta,\delta'\in\R$ (with $\delta'\ge\delta$) such that $ax \le
\delta$ is valid for $Q_I$ and $ax \le \delta'$ is valid for $Q$, the inequality $ax
\le \delta$ is valid for $Q^{(p+1)}$, where $p=(\lfloor \delta'\rfloor - \lfloor\delta
\rfloor) f(n)$ and $f$ is a function depending on $n$ only.
\end{lem}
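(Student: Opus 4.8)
\emph{Proof proposal.} The plan is to push the right‑hand side $\delta'$ down to $\delta$ one integer unit at a time, each unit costing a bounded number of CG rounds, by repeatedly cutting away the face of the current relaxation on which $ax$ attains its (too large) maximum. First I would dispose of the trivial case $a=0$ and assume $Q$ rational (the essential case). Since every integral point $x\in Q\cap\Z^n$ satisfies $ax\le\delta$ and $ax\in\Z$, it satisfies $ax\le\floor\delta$; hence $ax\le\floor\delta$ is valid for $Q_I$, and it suffices to prove the stronger inequality $ax\le\floor\delta$ is valid for $Q^{(p+1)}$. Likewise $ax\le\floor{\delta'}$ is a CG inequality for $Q$, so at the cost of one round we may assume $\delta'=\floor{\delta'}\in\Z$. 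It then suffices to establish the following \textbf{one‑unit step}: if $m\in\Z$ with $m>\floor\delta$, and $R$ is a rational polyhedron with $ax\le m$ valid for $R$ and $R\cap\Z^n\subseteq\{x:ax\le\floor\delta\}$, then $ax\le m-1$ is valid for $R^{(\varphi(n)+1)}$. Indeed, applying this to $R=Q^{(1)}$ and iterating (CG closures preserve the set of integral points, so the hypothesis persists) lowers the bound from $\floor{\delta'}$ to $\floor\delta$ in at most $\floor{\delta'}-\floor\delta$ steps; together with the preliminary round this gives the lemma with $f(n)=\varphi(n)+1$, the case $\floor{\delta'}\le\floor\delta$ being immediate.

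To carry out the one‑unit step, consider the face $F:=R\cap\{x:ax=m\}$ of $R$. If $F=\emptyset$ then $\max\{ax:x\in R\}<m$ and the single CG inequality $ax\le\floor{\max\{ax:x\in R\}}\le m-1$ finishes it. If $F\neq\emptyset$ then $F$ contains no integer point (an integral point of $R$ satisfies $ax\le\floor\delta<m$), so $F_I=\emptyset$ and Lemma~\ref{lem:PI=0} gives $F^{(\varphi(n))}=\emptyset$. The point is then that CG closures commute with passage to this face, namely
\[
R^{(t)}\cap\{x:ax=m\}\ \subseteq\ F^{(t)}\qquad\text{for all }t\ge0 .
\]
With $t=\varphi(n)$ this yields $R^{(\varphi(n))}\cap\{x:ax=m\}=\emptyset$, hence $\max\{ax:x\in R^{(\varphi(n))}\}<m$, and one more round produces $ax\le m-1$ on $R^{(\varphi(n)+1)}$.

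The commutation inclusion is where the real work lies, and I would prove it by induction on $t$ from its one‑round version: \emph{if $S$ is a rational polyhedron with $ax\le m$ valid for $S$ ($a\in\Z^n$, $m\in\Z$) and $F_S:=S\cap\{x:ax=m\}\neq\emptyset$, then $S'\cap F_S\subseteq F_S'$.} To see this, write $S=\{x:Bx\le d\}$ and let $cx\le\floor\gamma$ be any CG inequality for $F_S$, so $c\in\Z^n$ and $cx\le\gamma$ is valid for $F_S=S\cap\{x:ax\ge m\}$. By LP duality there are $y\ge0$, $\mu\ge0$ with $B^\top y-\mu a=c$ and $d^\top y-\mu m\le\gamma$; the crucial observation is that $\mu$ may be taken in $\Z_{\ge0}$, because $ax\le m$ is valid for $S$ and attained (as $F_S\neq\emptyset$), so LP duality also gives $z\ge0$ with $B^\top z=a$, $d^\top z=m$, and replacing $(y,\mu)$ by $\bigl(y+(\ceil\mu-\mu)z,\ \ceil\mu\bigr)$ preserves all three relations. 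Then $c+\mu a=B^\top y\in\Z^n$ and $(c+\mu a)x\le d^\top y$ is valid for $S$, so $(c+\mu a)x\le\floor{d^\top y}$ is a CG inequality for $S$; evaluating it at any $x_0\in S'\cap F_S$ and using $ax_0=m\in\Z$,
\[
cx_0=(c+\mu a)x_0-\mu m\ \le\ \floor{d^\top y}-\mu m\ =\ \floor{d^\top y-\mu m}\ \le\ \floor\gamma .
\]
For the induction one notes that $R^{(t)}\cap\{x:ax=m\}$ is a face of the rational polyhedron $R^{(t)}$ (the inequality $ax\le m$ is valid there), applies the one‑round version to it, and uses monotonicity of the closure operator together with the inductive hypothesis $R^{(t)}\cap\{x:ax=m\}\subseteq F^{(t)}$.

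I expect the main obstacle to be precisely this commutation inclusion, and within it the observation that the dual multiplier attached to the equation $ax=m$ can be rounded up to an integer without destroying dual feasibility or the bound — this is exactly what lets a CG inequality of the face be lifted to a CG inequality of the whole body. Everything else (the reductions to integral data, the induction on $\floor{\delta'}-\floor\delta$, and the accounting of rounds) is routine bookkeeping; one should also verify that Lemma~\ref{lem:PI=0} is applicable to $F$, which is harmless since $F$ is a rational polyhedron and the bound $\varphi(n)$ depends on the ambient dimension only, so the fact that $F$ is not full‑dimensional causes no difficulty.
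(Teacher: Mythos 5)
Your proof is correct and is essentially the standard argument behind this lemma, which the paper does not prove itself but cites from \cite{non-full-dim}: lower the right-hand side one integer unit at a time, applying Lemma~\ref{lem:PI=0} to the face $R\cap\{x:ax=m\}$ and transferring its CG cuts to $R$ via the face-lifting property $R'\cap F\subseteq F'$, whose LP-duality proof with the integral rounding of the multiplier attached to $ax=m$ you reproduce correctly, giving $f(n)=\varphi(n)+1$. The only slight mismatch is your silent restriction to rational $Q$ (the statement says ``every polyhedron''), which is harmless here since the lemma is only ever applied to rational relaxations, and rationality is needed anyway to invoke Lemma~\ref{lem:PI=0} and the polyhedrality of the iterated closures.
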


In order to derive lower bounds, one can apply a result by Chv\'atal, Cook, and Hartmann~\cite{ChCoHa} that gives sufficient conditions for a sequence of points to be in successive CG closures of a rational polyhedron. The one we provide next is a less general, albeit sufficient for our needs, version of their original lemma.

\begin{lem}\label{lem:iterat-cc}
Let $Q\subseteq \R^n$ be a rational polyhedron, $x \in Q$, $v \in \R^n$, $p \in \N$ and, for $j \in \{1,\dots, p\}$, let $x^j=x-j\cdot v$. Assume that, for all $j \in \{1,\dots,p\}$ and every inequality $cx \leq \delta$ valid for $Q_I$ with $c \in \Z^n$ and $cv < 1$, one has $cx^j\leq \delta$. Then $x^j \in Q^{(j)}$ for all $j \in \{1,\dots, p\}.$
\end{lem}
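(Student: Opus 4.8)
The statement to prove is Lemma~\ref{lem:iterat-cc}: starting from $x \in Q$ and a direction $v$, the points $x^j = x - jv$ lie in the $j$-th CG closure $Q^{(j)}$, provided every integer inequality $cx \le \delta$ valid for $Q_I$ with $cv < 1$ is already satisfied at all the $x^j$. The natural approach is induction on $j$. The base case $j=0$ is just $x \in Q = Q^{(0)}$, which is the hypothesis $x \in Q$. For the inductive step, I assume $x^1, \dots, x^{j-1}$ lie in $Q^{(1)}, \dots, Q^{(j-1)}$ respectively, and I want to conclude $x^j \in Q^{(j)} = (Q^{(j-1)})'$.

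\smallskip

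\textbf{The core of the inductive step.} To show $x^j \in (Q^{(j-1)})'$ I need that $x^j$ satisfies every CG inequality for $Q^{(j-1)}$. So fix an integer vector $c \in \Z^n$ and a scalar $\delta$ such that $cx \le \delta$ is valid for $Q^{(j-1)}$; the CG inequality is $cx \le \lfloor \delta \rfloor$, and I must check $cx^j \le \lfloor \delta \rfloor$. There are two cases depending on the sign of $cv$. If $cv \ge 1$, then $cx^j = cx^{j-1} - cv \le cx^{j-1} - 1$; since $x^{j-1} \in Q^{(j-1)}$ we have $cx^{j-1} \le \delta$, hence $cx^j \le \delta - 1$, and since $cx^j$ compared against the integer $\lfloor\delta\rfloor$ satisfies $cx^j \le \delta - 1 < \lfloor \delta \rfloor + 1 - 1 = \lfloor\delta\rfloor$ when... let me be careful: $\delta - 1 \le \lfloor\delta\rfloor$ always holds, so $cx^j \le \lfloor\delta\rfloor$, done. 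If instead $cv < 1$, I want to invoke the hypothesis of the lemma, but that hypothesis speaks about inequalities valid for $Q_I$, whereas here $cx \le \delta$ is only known valid for $Q^{(j-1)}$. The resolution is that $Q^{(j-1)} \supseteq Q_I$ (every CG closure contains the integer hull), so $cx \le \delta$ is in particular valid for $Q_I$; strictly, I should replace $\delta$ by $\delta_I := \max\{cx : x \in Q_I\} \le \delta$, apply the lemma's hypothesis to the $Q_I$-valid inequality $cx \le \delta_I$ to get $cx^j \le \delta_I \le \delta$, and then $cx^j \le \delta \le \lfloor\delta\rfloor$? No — $cx^j \le \delta$ does not give $cx^j \le \lfloor\delta\rfloor$ directly. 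But $cx^j \le \delta_I$ where $\delta_I = \lfloor\delta_I\rfloor$ is an integer if $Q_I$ is nonempty and rational... actually $\delta_I = \max\{cx: x\in Q_I\}$ is attained at an integer point, so $\delta_I \in \Z$, hence $\delta_I \le \lfloor\delta\rfloor$, giving $cx^j \le \delta_I \le \lfloor\delta\rfloor$. (If $Q_I = \emptyset$ one argues separately, or notes the statement is about $Q^{(j)}$ which still makes sense; but the intended reading has $Q_I \ne \emptyset$, or one uses Lemma~\ref{lem:PI=0}-type reasoning.)

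\smallskip

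\textbf{Where the difficulty lies.} The genuinely delicate point is bridging from ``valid for $Q^{(j-1)}$'' to ``valid for $Q_I$'' — i.e. recognizing that the hypothesis need only be stated for $Q_I$-valid inequalities because every later closure sandwiches between $Q$ and $Q_I$, together with the integrality trick that the true maximum of an integer functional over a rational integral polyhedron is an integer, so rounding is automatic once we pass to $\delta_I$. The case split on $\mathrm{sign}(cv)$ relative to $1$ is exactly engineered so that in the ``bad'' direction ($cv \ge 1$) the point moves by at least one full unit against $c$, absorbing the floor, while in the ``good'' direction ($cv < 1$) the hypothesis is invoked verbatim. Everything else is routine bookkeeping with the inductive hypothesis $x^{j-1} \in Q^{(j-1)}$. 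I would also remark that this is a specialization of the Chv\'atal--Cook--Hartmann lemma from~\cite{ChCoHa}, so one could alternatively just cite it, but giving the short inductive proof keeps the paper self-contained.
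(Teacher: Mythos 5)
Your proof is sound, and it is worth pointing out that the paper itself does not prove this lemma at all: it is presented as a weakened form of the Chv\'atal--Cook--Hartmann lemma and justified purely by the citation to \cite{ChCoHa}. Your induction on $j$ is essentially the standard argument behind that cited result, specialized to the present setting, so you are supplying the proof the paper omits rather than diverging from it. The core mechanism is right: for an inequality $cx\le\delta$ valid for $Q^{(j-1)}$ with $c$ integral, either $cv\ge 1$, in which case the inductive hypothesis $x^{j-1}\in Q^{(j-1)}$ gives $cx^j=cx^{j-1}-cv\le\delta-1\le\lfloor\delta\rfloor$, or $cv<1$, in which case you correctly pass to $\delta_I=\max\{cx:x\in Q_I\}\le\delta$, note that $\delta_I$ is an integer (it is the supremum of the integers $cz$ over $z\in Q\cap\Z^n$, bounded above by $\delta$, hence attained), and apply the lemma's hypothesis to the $Q_I$-valid inequality $cx\le\delta_I$ to get $cx^j\le\delta_I\le\lfloor\delta\rfloor$.

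Two points deserve a cleaner finish, though neither is a genuine gap. First, the case $Q_I=\emptyset$ needs no hedging and no appeal to Lemma~\ref{lem:PI=0}: if $Q_I=\emptyset$, then $0\cdot x\le-1$ is valid for $Q_I$ with $c=0$ and $cv=0<1$, so the hypothesis of the lemma is unsatisfiable and the statement is vacuously true. Second, since the paper defines $Q'$ as the set of points \emph{of $Q$} satisfying all CG inequalities for $Q$, concluding $x^j\in Q^{(j)}$ requires, besides the CG inequalities, the membership $x^j\in Q^{(j-1)}$; this is automatic because $Q^{(j-1)}$ is a rational polyhedron, so each of its facet-defining inequalities can be written with an integral normal vector and exact right-hand side $\beta$, and the corresponding CG inequality $cx\le\lfloor\beta\rfloor$, which $x^j$ satisfies, implies $cx^j\le\beta$. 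Adding these two sentences makes your argument complete and self-contained.
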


As a corollary, we have the following result:

\begin{lem}\label{lem:lower}
Let $Q\subseteq \R^n$ be a rational polyhedron, $x \in Q$, and $v \in \Z^n$ be such that $\{x - t v : t \ge 0\} \cap Q_I \neq \emptyset$.
Let $\bar t = \min \{t \ge 0 : x - t v \in Q_I\}$. Then $r(Q)\geq \lceil \;\bar t\;\rceil$.\end{lem}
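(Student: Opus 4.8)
The goal is to deduce Lemma~\ref{lem:lower} from Lemma~\ref{lem:iterat-cc}. Set $p = \lceil \bar t \rceil - 1$ (if $\bar t = 0$ there is nothing to prove, since a CG rank is always $\ge 0$, so assume $\bar t > 0$ and hence $p \ge 0$; the interesting case is $p \ge 1$). The plan is to exhibit a point $x' \in Q$ and apply Lemma~\ref{lem:iterat-cc} with $x'$ in place of $x$, the same $v$, and this value of $p$, to conclude that $x' - p\cdot v \in Q^{(p)}$. Since $\bar t = \min\{t \ge 0 : x - tv \in Q_I\}$, the point $x - pv$ (more precisely, a suitable point near it) will not lie in $Q_I$, forcing $Q^{(p)} \neq Q_I$ and hence $r(Q) \ge p+1 = \lceil \bar t\rceil$.

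The key step is choosing $x'$ correctly and checking the hypothesis of Lemma~\ref{lem:iterat-cc}. The natural choice is to take $x' = x - (\bar t - p)\,v$, i.e., shift $x$ along $-v$ so that the last point of the chain, $x' - p v = x - \bar t\, v$, lands exactly on the boundary point of $Q_I$ guaranteed by the minimality of $\bar t$. Note $0 \le \bar t - p < 1$, so $x'$ lies on the segment between $x$ and $x - \bar t v \in Q_I \subseteq Q$, hence $x' \in Q$ by convexity. Now I must verify: for every $j \in \{1,\dots,p\}$ and every inequality $cx \le \delta$ valid for $Q_I$ with $c \in \Z^n$ and $cv < 1$, we have $c(x' - jv) \le \delta$. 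Write $x' - jv = x - \bar t v + (\bar t - j) v$. Since $x - \bar t v \in Q_I$, we have $c(x - \bar t v) \le \delta$. Since $j \le p \le \bar t$, the quantity $\bar t - j \ge 0$, and because $c \in \Z^n$ with $cv < 1$, actually $cv \le 0$ (as $cv$ is an integer strictly less than $1$)—this is the crucial observation. Wait, $cv$ need not be an integer unless $v \in \Z^n$; but $v \in \Z^n$ by hypothesis, so $cv \in \Z$ and $cv < 1$ gives $cv \le 0$. Therefore $c\bigl((\bar t - j)v\bigr) = (\bar t - j)(cv) \le 0$, and combining, $c(x' - jv) \le \delta$. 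This establishes the hypothesis of Lemma~\ref{lem:iterat-cc}.

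Applying Lemma~\ref{lem:iterat-cc} yields $x' - jv \in Q^{(j)}$ for all $j \in \{1,\dots,p\}$; in particular, taking $j = p$ (assuming $p \ge 1$), the point $y := x' - pv = x - \bar t v + (\bar t - p)v$ lies in $Q^{(p)}$. Finally I argue $y \notin Q_I$: indeed $y = x - \bigl(\bar t - (\bar t - p)\bigr)v$ is of the form $x - t v$ with $t = \bar t - (\bar t - p) = p < \bar t$ (strictly, since $\bar t > p$ as $p = \lceil \bar t\rceil - 1$ and $\bar t \notin \Z$ would give $p < \bar t$; if $\bar t \in \Z$ then $p = \bar t - 1 < \bar t$ as well). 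By minimality of $\bar t$, no point $x - tv$ with $0 \le t < \bar t$ belongs to $Q_I$, so $y \notin Q_I$. Hence $Q^{(p)} \supsetneq Q_I$, i.e. $Q^{(p)} \neq Q_I$, which forces $r(Q) \ge p + 1 = \lceil \bar t \rceil$. (When $p = 0$, i.e. $0 < \bar t \le 1$, the point $x$ itself is in $Q = Q^{(0)}$ but, since $x = x - 0\cdot v$ and $0 < \bar t$, $x \notin Q_I$, so $r(Q) \ge 1 = \lceil \bar t\rceil$ directly.)

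The main obstacle—and really the only subtlety—is the integrality argument that $cv < 1$ together with $v \in \Z^n$ and $c \in \Z^n$ upgrades to $cv \le 0$; this is what makes the shifted points $x' - jv$ satisfy all the relevant valid inequalities for $Q_I$, and it is exactly the place where the hypothesis $v \in \Z^n$ is used. Everything else is bookkeeping with convex combinations and the definition of $\bar t$.
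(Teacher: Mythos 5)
Your overall strategy is the paper's: apply Lemma~\ref{lem:iterat-cc}, use integrality of $c$ and $v$ to upgrade $cv<1$ to $cv\le 0$, write each chain point as a point of $Q_I$ plus a nonnegative multiple of $v$, and invoke minimality of $\bar t$ to place the last chain point outside $Q_I$. However, as written the argument contains an inconsistency that breaks the final step. You start the chain at $x'=x-(\bar t-p)v$, so its $j$-th point is $x'-jv=(x-\bar t v)+(p-j)v$, \emph{not} $(x-\bar t v)+(\bar t-j)v$ as you write; in particular the last point is $x'-pv=x-\bar t v$, which lies \emph{in} $Q_I$ and therefore certifies nothing. From your shifted chain the best you can extract is that $x'-(p-1)v=x-(\bar t-1)v\in Q^{(p-1)}\setminus Q_I$, which only gives $r(Q)\ge p=\lceil\bar t\rceil-1$, one short of the claim. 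The later identification $y:=x'-pv=x-\bar t v+(\bar t-p)v=x-pv$ silently replaces $x'$ by $x$, and the assertion $y\notin Q_I$ is false for the chain you actually defined.

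The fix is simply to drop the shift, which is exactly what the paper does: run the chain from $x$ itself, i.e.\ $x^j=x-jv$ for $j=1,\dots,p$ with $p=\lceil\bar t\rceil-1$. Then $x^j=(x-\bar t v)+(\bar t-j)v$ with $\bar t-j>0$, your verification $cx^j=c(x-\bar t v)+(\bar t-j)cv\le\delta$ goes through verbatim (this is where $x-\bar t v\in Q_I$ and $cv\le 0$ are used), and $x^p=x-pv\notin Q_I$ by minimality of $\bar t$ since $p<\bar t$, yielding $r(Q)\ge p+1=\lceil\bar t\rceil$. In other words, the point $x-\bar t v\in Q_I$ should enter only inside the verification of the hypothesis of Lemma~\ref{lem:iterat-cc}, not as the starting point of the chain. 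Your treatment of the boundary case $0<\bar t\le 1$ (and the trivial case $\bar t=0$) is fine.
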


\begin{proof}
The lemma is trivially true if $\bar t \in [0,1]$, so suppose $\bar t > 1$. By hypothesis, there exists a point $x' \in Q_I$ such that $x=x' + \bar t v$. We apply Lemma \ref{lem:iterat-cc} with $p=\lceil \; \bar t \; \rceil -1$. Let $cx \leq \delta$ be valid for $Q_I$, with $c$ integer. If $cv < 1$, then $cv\leq 0$, since $c$ and $v$ are integer. Then for $j=1,\dots, \lceil \;\bar t\; \rceil -1$, one has $$c x^j=c (x - j \cdot v) =c( x' + (\bar t - j) \cdot v) = c x' + (\bar t-j) cv \leq \delta,$$ where the inequality follows from $x' \in Q_I$, $cv \leq 0$, $\bar t-j > 0$. Hence the hypothesis of Lemma~\ref{lem:iterat-cc} holds. We conclude $x^{\lceil \;\bar t\;\rceil-1} \in Q^{(\lceil \;\bar t \;\rceil -1)}$. Since by construction $x^{\lceil \;\bar t\;\rceil-1} \notin Q_I$, the statement follows.
\end{proof}

\subsection*{Unimodular transformations}

A \emph{unimodular transformation} $u: \R^n \rightarrow \R^n$ maps a point $x \in \R^n$ to $u(x)=Ux + v$, where $U$ is an $n\times n$ unimodular matrix (i.e. a square integer matrix with $|\det(U)|=1$) and $v\in \Z^n$. It is well-known (see e.g.~\cite{sch}) that a nonsingular matrix $U$ is unimodular if and only if so is $U^{-1}$. Furthermore, a unimodular transformation is a bijection of both $\R^n$ and $\Z^n$ that preserves $n$-dimensional volumes. Moreover, the following holds (\cite{EiSc}).

\begin{lem}\label{obs:unimodular-preserves}
Let $Q\subseteq \R^n$ be a polyhedron and $u : \R^n \rightarrow \R^n$, $u(x)=Ux + v$, be a unimodular transformation. Then for each $t \in \N$, an inequality $cx \leq \delta$ is valid for $Q^{(t)}$ if and only if the inequality $cU^{-1}x \leq \delta + cU^{-1}v$ is valid for $u(Q)^{(t)}$. Moreover, the CG rank of $Q$ equals the CG rank of $u(Q)$. \end{lem}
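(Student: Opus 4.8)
The plan is to prove the stronger fact that a unimodular transformation commutes with the CG closure operator, that is, $u(Q') = (u(Q))'$ for every polyhedron $Q$, and then to iterate this identity and combine it with the analogous (easy) fact for integer hulls.

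First I would record the elementary change of variables. Setting $y = u(x) = Ux + v$, so that $x = U^{-1}y - U^{-1}v$, the inequality $cx \le \delta$ holds at $x$ if and only if $cU^{-1}y \le \delta + cU^{-1}v$ holds at $y$; since $u$ is a bijection of $\R^n$, this shows that $cx \le \delta$ is valid for $Q$ if and only if $cU^{-1}x \le \delta + cU^{-1}v$ is valid for $u(Q)$, which is precisely the claimed equivalence for $t = 0$ (for arbitrary $c \in \R^n$ and $\delta \in \R$). The key integrality facts, which I would isolate next, are: because $U^{-1}$ is an integer matrix (being unimodular) and $v \in \Z^n$, if $c \in \Z^n$ then $cU^{-1} \in \Z^n$ and $cU^{-1}v \in \Z$; conversely every $c' \in \Z^n$ equals $cU^{-1}$ for $c = c'U \in \Z^n$. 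Hence $c \mapsto cU^{-1}$ is a bijection of $\Z^n$ and $\floor{\delta + cU^{-1}v} = \floor{\delta} + cU^{-1}v$.

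Using these facts I would match CG inequalities: a CG inequality $cx \le \floor{\delta}$ for $Q$ (with $c \in \Z^n$ and $cx \le \delta$ valid for $Q$) corresponds under $u$ exactly to $cU^{-1}y \le \floor{\delta} + cU^{-1}v = \floor{\delta + cU^{-1}v}$, which is a CG inequality for $u(Q)$ because $cU^{-1} \in \Z^n$ and $cU^{-1}y \le \delta + cU^{-1}v$ is valid for $u(Q)$; and by the bijection on integer vectors every CG inequality for $u(Q)$ arises in this way. Since $u$ maps the halfspace $\{x : cx \le \floor{\delta}\}$ onto $\{y : cU^{-1}y \le \floor{\delta} + cU^{-1}v\}$, taking the intersection over all CG inequalities gives $u(Q') = (u(Q))'$. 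A straightforward induction on $t$ then yields $u(Q^{(t)}) = (u(Q))^{(t)}$ for every $t \in \N$: the base case is the identity, and the inductive step is $(u(Q))^{(t+1)} = \bigl((u(Q))^{(t)}\bigr)' = \bigl(u(Q^{(t)})\bigr)' = u\bigl((Q^{(t)})'\bigr) = u(Q^{(t+1)})$. Applying the $t=0$ equivalence to $Q^{(t)}$ in place of $Q$, together with $u(Q^{(t)}) = (u(Q))^{(t)}$, gives the validity statement of the lemma. Finally, for the rank claim I would observe that, $u$ being a bijection of $\Z^n$, $u(Q_I) = u(\conv(Q \cap \Z^n)) = \conv(u(Q) \cap \Z^n) = (u(Q))_I$; hence $Q^{(t)} = Q_I$ if and only if $(u(Q))^{(t)} = (u(Q))_I$, so $r(Q) = r(u(Q))$.

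I do not expect a real obstacle: the argument is entirely bookkeeping, and the only place that genuinely uses unimodularity (as opposed to just affine invertibility) is the pair of integrality statements — that $cU^{-1}$ and $cU^{-1}v$ are integral and that $c \mapsto cU^{-1}$ maps $\Z^n$ onto $\Z^n$ — which is exactly what makes the floor operation commute with $u$ and puts the CG inequalities of $Q$ and $u(Q)$ in bijection.
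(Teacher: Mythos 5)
Your proof is correct. The paper does not prove this lemma at all (it is stated with a citation to Eisenbrand--Schulz), and your argument -- the bijection $c \mapsto cU^{-1}$ on $\Z^n$, the identity $\floor{\delta + cU^{-1}v} = \floor{\delta} + cU^{-1}v$, hence $u(Q') = (u(Q))'$, followed by induction on $t$ and the observation $u(Q_I) = (u(Q))_I$ -- is exactly the standard proof that is being invoked, with all the points where unimodularity and $v \in \Z^n$ are needed correctly identified.
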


Thanks to the previous lemma, when investigating the CG rank of a $d$-dimensional rational polyhedron $Q\subseteq \R^n$ with $Q\cap\Z^n \neq \emptyset$, we can apply a suitable unimodular transformation and assume that the affine hull of $Q$ is the rational subspace $\{x \in \R^n: x_{d+1}=x_{d+2}=\dots=x_n=0\}$.

\section{Geometric characterization of integral polyhedra with infinite reverse CG rank}\label{sec:main}

In this section we prove Theorem~\ref{th:main}. Since it is already known that, when $P$ is empty, $r^*(P)<+\infty$ (see Lemma \ref{lem:PI=0}), we assume $P\ne\emptyset$.

\begin{Obs}\label{obs:harder-than-you-think}
Let $C\subseteq \R^n$ be a convex set and $v \in \R^n$. Then $\relint(C)+\linhull v=\relint(C+\linhull v)$.
\end{Obs}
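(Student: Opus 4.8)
The plan is to derive this identity from the classical fact that the relative interior commutes with Minkowski sums of convex sets. Since $\linhull v$ is a linear subspace of $\R^n$ (the trivial subspace when $v=0$), it is an affine set and hence coincides with its own relative interior, so $\relint(\linhull v)=\linhull v$. Invoking the standard identity $\relint(A+B)=\relint(A)+\relint(B)$, valid for all convex $A,B\subseteq\R^n$ (a standard fact; see e.g.\ Rockafellar, \emph{Convex Analysis}), with $A=C$ and $B=\linhull v$ then gives $\relint(C+\linhull v)=\relint(C)+\relint(\linhull v)=\relint(C)+\linhull v$, which is the claim. If $C=\emptyset$ both sides are empty and there is nothing to prove; otherwise $C$ is nonempty and $\relint(C)\neq\emptyset$. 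Throughout I will use that translation by a multiple of $v$ maps $C+\linhull v$ onto itself and sends relative interiors to relative interiors, so both sides of the desired equality are invariant under adding any vector of $\linhull v$.

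If a self-contained argument is preferred, I would prove the two inclusions separately, the inclusion $\relint(C)+\linhull v\subseteq\relint(C+\linhull v)$ being the routine one. Set $W=\aff(C+\linhull v)=\aff(C)+\linhull v$ and fix $x\in\relint(C)$, so there is $\varepsilon>0$ with $B(x,\varepsilon)\cap\aff(C)\subseteq C$. If $v$ lies in the direction space of $\aff(C)$, then $W=\aff(C)$ and $B(x,\varepsilon)\cap W\subseteq C\subseteq C+\linhull v$; otherwise the direction space of $W$ splits as a direct sum of the direction space of $\aff(C)$ and $\linhull v$, and writing a point $z\in W$ near $x$ as $z=x+d+\mu v$ with $d$ in the direction space of $\aff(C)$, a norm-equivalence estimate bounds $\|d\|$ by a constant times $\|z-x\|$, so $x+d\in C$ for $z$ close enough to $x$ and hence $z=(x+d)+\mu v\in C+\linhull v$. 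Either way $x\in\relint(C+\linhull v)$, and adding $\linhull v$ (which this set absorbs) completes the inclusion.

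The reverse inclusion $\relint(C+\linhull v)\subseteq\relint(C)+\linhull v$ is the step I expect to be the real obstacle; the subtlety is that a point $z\in\relint(C+\linhull v)$ need not lie in $\relint(C)$ even when $z\in C$, so one really wants the line $z+\linhull v$ to meet $\relint(C)$. I would argue by contraposition: suppose $z\in C+\linhull v$ with $(z+\linhull v)\cap\relint(C)=\emptyset$, and show $z\notin\relint(C+\linhull v)$. As $\relint(C)$ and the line $z+\linhull v$ are disjoint nonempty convex sets, they can be \emph{properly} separated: there are a linear functional $f$ and a scalar $\gamma$ with $f\le\gamma$ on $z+\linhull v$, $f\ge\gamma$ on $\relint(C)$ (hence $f\ge\gamma$ on $C$ by continuity), and $f$ not constant on $\relint(C)\cup(z+\linhull v)$. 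A linear functional bounded above on a line is constant on it, so $f\equiv\gamma$ on $z+\linhull v$, forcing $f\cdot v=0$ and $f(z)=\gamma$; properness then gives that $f$ is non-constant on $\relint(C)$, so $C\not\subseteq\{f=\gamma\}$. Since $f(c+tv)=f(c)\ge\gamma$ for all $c\in C$ and $t\in\R$, the hyperplane $\{f=\gamma\}$ properly supports $C+\linhull v$ at $z$, whence $z\notin\relint(C+\linhull v)$, a contradiction. The only delicate ingredient is ensuring the separation is proper --- precisely what rules out the degenerate configuration where the line is ``tangent'' to $C$ --- and this is exactly why it is cleanest to simply invoke the Minkowski-sum identity of the first paragraph.
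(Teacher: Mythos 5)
Your proposal is correct. The paper states this Observation without proof, treating it as standard, and your first paragraph is exactly the intended justification: since $\linhull v$ is an affine (hence relatively open) convex set, the identity $\relint(A+B)=\relint(A)+\relint(B)$ for convex $A,B$ (Rockafellar, Cor.~6.6.2, itself a consequence of $\relint$ commuting with linear images) gives the claim immediately. Your self-contained two-inclusion argument also checks out --- in particular the proper-separation step is sound, since $\relint(C)$ and the line $z+\linhull v$ are nonempty convex sets with disjoint relative interiors, the bound $f\le\gamma$ on a line forces $fv=0$, and $f(z)=\gamma$ follows from combining $f(z)\le\gamma$ with $f\ge\gamma$ on $C+\linhull v$ --- but it is more machinery than the statement requires.
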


\begin{Obs}\label{obs:lf}
Let $C\subseteq\R^n$ be a convex set contained in a rational hyperplane such that $\aff(C) \cap \Z^n \neq \emptyset$.
Then $C$ is relatively lattice-free if and only if there exists $v \in \Z^n \sm \reccone (C)$ such that $C + \linhull v$ is relatively lattice-free.
\end{Obs}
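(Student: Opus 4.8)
The plan is to treat the two directions separately. The ``if'' direction (existence of $v$ implies that $C$ is relatively lattice-free) needs none of the hypotheses on $C$: since $0\in\reccone(C)$ the vector $v$ is nonzero, so $0\in\linhull v$ and hence $\relint(C)\subseteq\relint(C)+\linhull v$, which by Observation~\ref{obs:harder-than-you-think} equals $\relint(C+\linhull v)$; as the latter contains no integer point, neither does $\relint(C)$.

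For the ``only if'' direction, the first step is to fix a convenient description of the ambient hyperplane. The assumption $\aff(C)\cap\Z^n\neq\emptyset$ forces $C\neq\emptyset$ and, combined with rationality, lets me write the hyperplane containing $C$ as $\{x : ax=b\}$ with $a\in\Z^n$ primitive; taking any $z_0\in\aff(C)\cap\Z^n$, which lies in this hyperplane, gives $b=az_0\in\Z$. Since $a$ is primitive there is $v\in\Z^n$ with $av=1$, and this is the vector I will use.

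The second step checks the two required properties of $v$. First, $v\notin\reccone(C)$: every recession direction $y$ of $C$ satisfies $ay=0$ (because $C$ lies in $\{x:ax=b\}$), while $av=1$. Second, $C+\linhull v$ is relatively lattice-free; this is the heart of the matter but is still a one-liner. By Observation~\ref{obs:harder-than-you-think}, $\relint(C+\linhull v)=\relint(C)+\linhull v$, so an integer point $z$ in it would satisfy $z=x+\mu v$ for some $x\in\relint(C)$ and $\mu\in\R$; applying the linear form $a$ yields $az=b+\mu$, hence $\mu=az-b\in\Z$, and therefore $x=z-\mu v\in\Z^n\cap\relint(C)=\emptyset$, a contradiction. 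I do not expect a genuine obstacle here: the only delicate point is the normalization of the hyperplane (integer primitive normal, integer right-hand side), which is precisely what $\aff(C)\cap\Z^n\neq\emptyset$ secures, and all the geometric content is already packaged in Observation~\ref{obs:harder-than-you-think}.
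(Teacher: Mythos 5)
Your proof is correct and follows essentially the same route as the paper: the paper normalizes via a unimodular transformation so that $C\subseteq\R^{n-1}\times\{0\}$ and takes $v=e^n$, which is exactly your choice of a primitive integer normal $a$ and an integer $v$ with $av=1$ made coordinate-free, and the ``if'' direction is the same inclusion $\relint(C)\subseteq\relint(C+\linhull v)$. The only difference is that you spell out explicitly (via $\mu=az-b\in\Z$) the verification the paper leaves as ``easy to verify.''
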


\begin{proof} Since $C$ is contained in a rational hyperplane and $\aff ( C ) \cap \Z^n \neq \emptyset$,  up to unimodular transformations we may assume that $C \subseteq \R^{n-1} \times \{0\}$.
If $C$ is relatively lattice-free, then it is easy to verify that the vector $e^n$ of the standard basis of $\R^n$ is such that $e^n \in \Z^n \sm \reccone (C)$ and $C + \linhull {e^n}$ is relatively lattice-free. Conversely, assume that there exists $v \in \Z^n \sm \reccone (C)$ such that $C + \linhull v$ is relatively lattice-free.
Clearly $\relint (C) \subseteq \relint (C + \linhull v)$, thus $C$ is relatively lattice-free.
\end{proof}

%
%\begin{pf}
%Assume that $C$ is lattice-free.
%Up to unimodular transformations we may assume that $C \subseteq \R^d \times \{0\}$.
%It is easy to verify that the vector $e^n$ of the standard basis of $\R^n$ is such that $e^n \in \Z^n \sm \reccone C$ and $C + \linhull e^n$ is lattice-free.
%
%Assume that there exists $v \in \Z^n \sm \reccone C$ such that $C + \linhull v$ is lattice-free.
%Clearly $\relint C \subseteq \relint (C + \linhull v)$, thus $C$ is lattice-free.
%\end{pf}

\subsection{Proof of Theorem \ref{th:main}: Sufficiency}\label{sec:suff}

Let $P\subseteq \R^n$ be a non-empty integral polyhedron and assume that $P+\linhull v$ is relatively lattice-free for some $v \in \Z^n\setminus \reccone(P)$: we prove that $r^*(P)=+\infty$. Let $\bar x \in \R^n$ be a point in the relative interior of $P$ such that $\bar x+v \notin P$, and $V$ be the set of vertices of $P$. For $\alpha \in \Z_+$, define $Q_{\alpha}=\conv(V,\bar x+\alpha v)+ \reccone(P)$. $Q_{\alpha}$ is a polyhedron and it contains $P$. In order to prove that it is a relaxation of $P$, it suffices to show that $Q_\alpha \cap \Z^n = P \cap \Z^n$. $\bar x + \alpha v \in \relint(P)+\linhull v$ hence, by Observation \ref{obs:harder-than-you-think}, $\bar x + \alpha v \in \relint(P + \linhull v)$. Thus, for each $x \in Q_\alpha$, at least one of the following holds: $x$ lies in $P$; $x$ lies in the relative interior of $P + \linhull v$, and since $P + \linhull v$ is relatively lattice-free by hypothesis, $x$ is not integer. This shows $Q_\alpha \cap \Z^n = P \cap \Z^n$. We now apply Lemma \ref{lem:lower} with $Q=Q_{\alpha}$ and $x=\bar{x}+\alpha v$; note that $\lceil \;\bar t\;\rceil = \alpha$. Hence, we deduce that $r(Q_\alpha)\geq \alpha$. The thesis then follows from the fact that $\alpha$ was chosen arbitrarily in $\Z_+$.

\subsection{Proof of Theorem \ref{th:main}: Necessity}

First, we show that the non-full-dimensional case follows from the full-dimensional one. More precisely, assuming that the statement holds for any full-dimensional polyhedron, we let $P \subseteq \R^n$ be a non-empty integral polyhedron of dimension $d < n$ so that there is no $v \in \Z^n \sm \reccone (P)$ such that $P + \linhull v$ is relatively lattice-free, and we show that $r^*(P)<+\infty$. Hence, let $P$ be as above. Up to a unimodular transformation, we can assume that $\aff(P)=\{x \in \R^n : x_{d+1}=x_{d+2}=\dots=x_n=0\}$. Observation \ref{obs:lf} implies that $P$ is not relatively lattice-free. We then make use of the following fact~\cite[Theorem 1]{non-full-dim}.

\begin{thm}\label{th:main-non-full-dim}
There exists a function $f: \N \rightarrow \N$ such that, for each integral polyhedron $P \subseteq \R^n$ that is not relatively lattice-free, and each relaxation $Q$ of $P$, $Q^{(f(n))}$ is contained in $\aff(P)$.
\end{thm}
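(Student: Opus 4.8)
The plan is to prove Theorem~\ref{th:main-non-full-dim}, which states that there is a dimension-dependent function $f$ such that for any integral polyhedron $P$ that is not relatively lattice-free and any relaxation $Q$ of $P$, the iterated closure $Q^{(f(n))}$ is contained in $\aff(P)$. First I would observe that, since the statement only asserts containment in $\aff(P)$ and not equality with $P$, it suffices to bound how quickly the CG procedure cuts $Q$ down to live inside the affine hull of its integer hull. After a unimodular transformation (using Lemma~\ref{obs:unimodular-preserves}), I may assume $\aff(P)=\{x\in\R^n: x_{d+1}=\dots=x_n=0\}$, where $d=\dim(P)$. The goal is then to show that each of the $2(n-d)$ inequalities $x_i\le 0$ and $-x_i\le 0$, for $i=d+1,\dots,n$, becomes valid after a bounded number of rounds.

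The key tool is Lemma~\ref{lem:upper}: for a single inequality $ax\le\delta$ valid for $Q_I=P$ and $ax\le\delta'$ valid for $Q$, validity is achieved in $(\lfloor\delta'\rfloor-\lfloor\delta\rfloor)f(n)$ rounds. Here $\delta=0$, so the number of rounds is governed by $\lfloor\delta'\rfloor$, i.e. by how far the relaxation $Q$ sticks out of $\aff(P)$ in coordinate direction $e^i$. The heart of the matter is therefore to bound this ``width'' uniformly over all relaxations $Q$ of $P$. This is where the hypothesis that $P$ is \emph{not} relatively lattice-free is essential: let $z\in\relint(P)\cap\Z^n$ (a point guaranteed to exist). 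I would argue that because $z$ is an interior lattice point of $P$ relative to $\aff(P)$, and because $Q\cap\Z^n=P\cap\Z^n$, the polyhedron $Q$ cannot extend too far past $\aff(P)$: if it did, then $Q$ would contain a small ball around a point of the form $z\pm\epsilon e^i$ sitting strictly off $\aff(P)$, and that region would capture a lattice point not in $P$ (one contradicts $Q\cap\Z^n=P\cap\Z^n$ via a Minkowski-type / pigeonhole argument, since $z$ being a relative interior lattice point means the full-dimensional ``cylinder'' $P+\sum_{i>d}\linhull{e^i}$ contains $z$ in its interior). Quantitatively, one shows $\lfloor\delta'_i\rfloor$ is bounded by a function of $n$ alone.

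Concretely, the main steps in order: (1) reduce via unimodular transformation to $\aff(P)=\R^d\times\{0\}$ and pick $z\in\relint(P)\cap\Z^n$; (2) show that any relaxation $Q$ satisfies $|x_i|\le g(n)$ for all $x\in Q$ and $i>d$, for some explicit $g$ depending only on $n$ --- this is the geometry-of-numbers step, using that $z$ is an interior lattice point and that $Q$ contains no lattice point outside $P$; (3) apply Lemma~\ref{lem:upper} with $\delta=0$, $\delta'=g(n)$ to each of the $2(n-d)\le 2n$ coordinate inequalities, obtaining validity of $x_i\le 0$ and $-x_i\le 0$ in at most $g(n)\cdot f(n)$ rounds each; (4) since the closures are nested and each such inequality, once valid, stays valid, after $f(n):=g(n)f(n)$ (abusing notation, a single function of $n$) rounds all these inequalities hold simultaneously, so $Q^{(f(n))}\subseteq\aff(P)$.

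I expect step (2) --- the uniform bound on how far a relaxation can protrude from $\aff(P)$ --- to be the main obstacle, since it must hold for \emph{every} relaxation simultaneously and genuinely requires the not-relatively-lattice-free hypothesis; the natural route is a volume/Minkowski argument showing that a protrusion of height $h$ forces a forbidden lattice point once $h$ exceeds a threshold determined by $z$'s position in $\relint(P)$, combined with the fact that one can translate $z$ by integer vectors to handle all directions. Steps (1), (3), and (4) are then routine bookkeeping given the lemmas already available. Since the excerpt cites this as \cite[Theorem 1]{non-full-dim}, I would also note that for the present paper it is legitimate to simply invoke it; the sketch above indicates how it is obtained.
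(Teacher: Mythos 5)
Your fallback — simply invoking \cite[Theorem~1]{non-full-dim} — is in fact exactly what the paper does: Theorem~\ref{th:main-non-full-dim} is imported verbatim from that reference and no proof of it appears here. So on that level your proposal matches the paper. The problem is with the sketch you offer as a substitute, whose central step (2) is false as stated.

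You claim that every relaxation $Q$ satisfies $|x_i|\le g(n)$ for all $x\in Q$ and all coordinates $i>d$, with $g$ depending on $n$ only. This fails as soon as the codimension $n-d$ is at least $2$. Take $P=\{(0,0)\}\subseteq\R^2$, which is integral and not relatively lattice-free (its relative interior is the integer point itself), so $d=0$ and $\aff(P)=\{(0,0)\}$. For any $N\in\N$, the segment $Q_N=\conv\{(0,0),(1/2,N)\}$ is a rational polytope with $Q_N\cap\Z^2=\{(0,0)\}=P\cap\Z^2$, hence a relaxation of $P$, yet $\sup\{x_2: x\in Q_N\}=N$. Thus no uniform bound $g(n)$ exists, and Lemma~\ref{lem:upper} cannot be applied to $x_2\le 0$ with a $\delta'$ depending on $n$ alone; also, your heuristic that a tall protrusion forces a forbidden lattice point near $z\pm\epsilon e^i$ breaks down because a relaxation can be a thin sliver threading between lattice points. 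The Minkowski/symmetrization argument you gesture at does work in codimension one, where $\conv(E,y)$ (with $E$ the symmetric body of Theorem~\ref{thm:ellipsoid} and $y$ the protruding point) is full-dimensional and the relevant lattice is $\Z^n$ itself with determinant $1$ — this is precisely Lemma~\ref{lem:aho} of the paper — but in codimension $\ge 2$ the $(d+1)$-dimensional set spanned by $E$ and $y$ may meet $\Z^n$ in a lattice of arbitrarily large determinant (in the example, the integer points on the line through $(0,0)$ and $(1/2,N)$ are the multiples of $(1,2N)$), so Minkowski yields no height bound. Note the theorem itself is not threatened by the example: $2x_1\le 1$ gives $x_1\le 0$ after one round, and the large $x_2$-extent then collapses for free. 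That is the missing idea: one does not need bounded extent in every direction orthogonal to $\aff(P)$ simultaneously, only \emph{some} integer direction, constant on $\aff(P)$, along which the current closure has width bounded in terms of $n$; cutting there and iterating drops the dimension one step at a time, with a bounded number of rounds per step. A dimension-reduction argument of this kind (rather than your single global application of Lemma~\ref{lem:upper} to all coordinates $i>d$) is what the proof in \cite{non-full-dim} has to provide, so your sketch as written does not reconstruct the cited result.
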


By Theorem~\ref{th:main-non-full-dim}, there is an integer $p$ depending only on $n$ such that, for each relaxation $Q$ of $P$, $Q^{(p)}\subseteq\aff(P)$, i.e., modulo at most $p$ iterations of the CG closure, we can assume that both $P$ and $Q$ are full-dimensional, and $P$ is not lattice-free. Hence, $P+\linhull v$ is not lattice-free for any $v \in \Z^d$, and $r^*(P)<+\infty$ follows from the full-dimensional case.

\medskip

Therefore it suffices to show the statement for $P$ full-dimensional. In fact, we show a stronger property, that will be useful later. Let $Ax \leq b$ be an irredundant description of $P$, with $A \in \Z^{m\times n}$ and $b \in \Z^m$. For $k \in \N$, let $P_k=\{x \in \R^n : Ax \leq b + k \cdot {\mathbf{1}}\}$, where $\mathbf1$ denotes the $m$-dimensional all-one vector.

\begin{Prop}\label{prop:full-dim-nec}
Let $P\subseteq \R^n$ be a full-dimensional integral polyhedron, and $Ax \leq b$ an irredundant description of $P$, with $A \in \Z^{m\times n}$ and $b \in \Z^m$. Then the following statements are equivalent.
\begin{enumerate}
\item[(1)] $r^*(P)=+\infty$;
\item[(2)] for each $k \in \N$, there exists a relaxation $Q_k$ of $P$ such that $Q_k \setminus P_k \neq \emptyset$;
\item[(3)] there exists $v \in \Z^n \setminus \reccone(P)$ such that $P+\linhull v$ is lattice-free.
\end{enumerate}
\end{Prop}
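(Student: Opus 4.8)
The plan is to prove the cycle of implications $(3) \Rightarrow (1) \Rightarrow (2) \Rightarrow (3)$, which together with the already-proved sufficiency direction (Section~\ref{sec:suff}) gives all equivalences. The implication $(3) \Rightarrow (1)$ is essentially the content of Section~\ref{sec:suff} specialized to the full-dimensional case: if $v \in \Z^n \setminus \reccone(P)$ makes $P + \linhull v$ lattice-free, then in particular $P+\linhull v$ is relatively lattice-free (here $P+\linhull v$ is full-dimensional, so relative interior equals interior), and the construction $Q_\alpha = \conv(V, \bar x + \alpha v) + \reccone(P)$ of Section~\ref{sec:suff} yields relaxations with $r(Q_\alpha) \ge \alpha$, so $r^*(P) = +\infty$.

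The implication $(1) \Rightarrow (2)$ should follow from Lemma~\ref{lem:upper} by contraposition. Suppose $(2)$ fails: there is some $k \in \N$ such that every relaxation $Q$ of $P$ satisfies $Q \subseteq P_k$. Each facet-defining inequality $a_i x \le b_i$ of $P$ (a row of $Ax \le b$) is then valid for $P = Q_I$, and $a_i x \le b_i + k$ is valid for $Q$ (since $Q \subseteq P_k$). By Lemma~\ref{lem:upper}, $a_i x \le b_i$ is valid for $Q^{(p+1)}$ where $p = k \cdot f(n)$ depends only on $n$ and $k$, not on $Q$. Intersecting over the finitely many rows $i = 1, \dots, m$, we get $Q^{(k f(n)+1)} \subseteq P$, hence $Q^{(k f(n)+1)} = P$, so $r(Q) \le k f(n) + 1$ for every relaxation $Q$; thus $r^*(P) < +\infty$, contradicting $(1)$.

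The main work is $(2) \Rightarrow (3)$, and this is where I expect the real obstacle to lie. Assume that for every $k$ there is a relaxation $Q_k$ of $P$ with a point $y^k \in Q_k \setminus P_k$; so $y^k$ violates some facet inequality by more than $k$, i.e. $a_{i_k} y^k > b_{i_k} + k$ for some row $i_k$. Passing to a subsequence, fix a single index $i$ with $a_i y^k \to +\infty$. The intuition is that $y^k$ is an integer-point-free direction ``escaping to infinity'' away from $P$; I want to extract from this a primitive integer vector $v$ such that $P + \linhull v$ is lattice-free. A natural candidate is to normalize $y^k - \bar x$ (for $\bar x \in \relint P$ fixed) and take a limit direction $u$; then show $u$ can be perturbed/rounded to a rational, hence integer, direction $v \notin \reccone(P)$ (it is not in $\reccone(P)$ because $a_i u \ge 0$ is not the whole story — one must use $a_i y^k \to \infty$ together with $y^k \in Q_k$, a relaxation, to argue the ray $\bar x + \R_+ u$ hits no integer point, or rather to argue lattice-freeness of $P + \linhull v$). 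The key point to nail down: the segment $[\bar x, y^k] \subseteq Q_k$ contains no integer points other than those of $P$, and since $y^k$ is very far in direction $u$, the ``tube'' $P + \linhull u$ restricted to a large bounded region is covered by such segments (translated over $P$), forcing it to be lattice-free; formalizing the limit and handling the fact that $u$ may be irrational (where one should invoke a rational-direction argument, possibly Dirichlet/simultaneous approximation, to replace $u$ by an integer $v$ while preserving lattice-freeness of $P + \linhull v$) is the delicate step. I would structure this as: (a) extract the escaping direction and the fixed facet index; (b) show the relevant translates of segments $[\bar x, y^k]$ are integer-point-free in $Q_k$; (c) take a Hausdorff/pointwise limit to conclude $P + \linhull u$ is lattice-free for the limit direction $u$; (d) argue $u$ can be taken rational, hence (scaling) integer, and $u \notin \reccone(P)$. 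Steps (c) and (d) are the crux; everything else is bookkeeping.
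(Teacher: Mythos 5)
Your implications $(3)\Rightarrow(1)$ and $(1)\Rightarrow(2)$ are exactly the paper's arguments and are correct. The genuine gap is in $(2)\Rightarrow(3)$, which you only outline, and the outline breaks down at precisely the points you yourself flag as the crux. First, the extraction of the direction: normalizing $y^k-\bar x$ for a \emph{fixed} $\bar x\in\relint(P)$ does not by itself rule out that the limit $u$ lies in $\reccone(P)$, since the defining property $y^k\notin P_k$ is compatible with points that violate one facet inequality by more than $k$ while drifting predominantly along a recession direction (for the half-strip $P=\{x\in\R^2:0\le x_2\le 1,\ x_1\ge 0\}$, points such as $(k^2,k+2)$ violate $x_2\le 1$ by more than $k$, yet their direction from any fixed $\bar x$ tends to $(1,0)\in\reccone(P)$); your parenthetical remark acknowledges this but does not resolve it. The paper resolves it by a different choice: $x^k$ is the point of $P$ nearest to $y^k$ and $v^k=y^k-x^k$, so that $v^k$ is an outer normal to $P$ at $x^k$, hence lies in the optimality cone of $P$; that cone is closed and meets $\reccone(P)$ only in the origin, so the limit unit vector $v$ is automatically outside $\reccone(P)$.

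Second, the lattice-freeness of the tube is asserted via a covering/limit claim that is not established and is the actual hard part. A putative integer point $\tilde z\in\interior(P+\linhull u)$ may sit at small or negative offset along $u$, or (since $P$ may be unbounded) above a part of $P$ far from where $x^k$ and $y^k$ live, so it need not be captured by $\conv(B,y^k)$ for any $k$. The paper needs three ingredients here: the Dirichlet-type lemma of Basu et al.\ to replace $\tilde z$ by an integer point $z=w+\beta v$ with $w\in\interior(P^*)$ and $\beta>2\delta$, where $P=P^*+\intcone(R)$ and $\delta$ is the diameter of $P^*$; integer translations $z^k=z+\sum_i a^k_i r^i$ that align the integer point with the translate of $P^*$ containing $x^k$; and the cone-of-revolution argument showing $z^k\in\conv(y^k,B^k)\subseteq Q_k$ while $z^k\notin P$, contradicting that $Q_k$ is a relaxation. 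Third, your step (d) cannot be done by perturbing $u$ to a nearby rational direction ``preserving lattice-freeness'': the tube is unbounded, so arbitrarily small angular changes can bring integer points into its interior arbitrarily far out. The paper instead proves lattice-freeness for the (possibly irrational) limit direction itself and then invokes the structure of maximal lattice-free convex sets (a polytope plus a \emph{rational} linear space $L$ containing $v$) to replace $v$ by an integer generator $s$ of $L$ with $s\notin\reccone(P)$. Without these steps the implication $(2)\Rightarrow(3)$ remains unproved.
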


\begin{proof} $(3) \Rightarrow (1)$ follows from the sufficiency implication of Theorem \ref{th:main}, which we proved in Section \ref{sec:suff}. In order to show
$(1)\Rightarrow(2)$, suppose to the contrary that there exists some $k \in \N$ such that $Q \subseteq P_{k}$ for each relaxation $Q$ of $P$. Fix any such relaxation $Q$. Then, for each inequality $ax \leq \beta$ from the system $Ax \leq b$ that defines $P$, $ax \leq \beta + k$ is valid for $Q$. Hence, by Lemma \ref{lem:upper}, $ax \leq \beta$ is valid for $Q^{(p)}$, with $p=k f(n)+1$ and $f$ being an appropriate function of $n$ only. This implies that all valid inequalities for $P$ are also valid for $Q^{(p)}$, and consequently $Q^{(p)}=P$. Since $Q$ was taken to be an arbitrary relaxation of $P$, this implies $r^*(P)< + \infty$, contradicting the assumptions.

\smallskip

We are left to prove $(2) \Rightarrow (3)$. This is divided into the following steps: (a) We construct a candidate vector $v \notin \reccone(P)$; (b) We show that $P + \linhull v$ is lattice-free; (c) We show that $v$ can be assumed wlog to be integral.

\medskip

%% SUMMARY - DELETED
%A summary of this step is the following: Claim \ref{cl:P^k} implies that we can find ``arbitrarily big'' relaxations $Q_k$ of $P$. To each $k \in \N$ we associate a vector $v^k$ that connects suitable points of $P$ and $Q_k\setminus P_k$. We show that a subsequence of the normalization of those $v^k$ converges to a vector $v \notin \reccone(P)$.

\noindent {\bf (a) Construction of }$\boldmath{v \notin \reccone(P)}$. By hypothesis, for every $k\in\N$ there exists a point $y^k \in Q_k\setminus P_k$ (see Figure \ref{fig:added}). Let $x^k$ be the point in $P$ such that $d(y^k,
x^k)=d(y^k,P)$ and define $v^k=y^k-x^k$.

\begin{figure}
\centering
% Generated with LaTeXDraw 2.0.8
% Mon Aug 18 23:34:02 CEST 2014
% \usepackage[usenames,dvipsnames]{pstricks}
% \usepackage{epsfig}
% \usepackage{pst-grad} % For gradients
% \usepackage{pst-plot} % For axes
\scalebox{.6} % Change this value to rescale the drawing.
{
\begin{pspicture}(0,-4.22)(16.02,4.22)
\definecolor{color1472b}{rgb}{0.8,0.8,0.8}
\definecolor{color1474b}{rgb}{0.4,0.4,0.4}

\pspolygon[linewidth=0.04,fillstyle=solid,fillcolor=color1472b](11.2,-2.8)(4.0,3.2)(5.2,-0.2)(7.4,-2.8)
\psline[linewidth=0.04cm,arrowsize=0.05291667cm 2.0,arrowlength=1.4,arrowinset=0.4]{<-}(6.0,3.0)(6.0,-4.2)
\psline[linewidth=0.04cm,arrowsize=0.05291667cm 2.0,arrowlength=1.4,arrowinset=0.4]{->}(3.2,-1.8)(13.4,-1.8)
\pspolygon[linewidth=0.04,fillstyle=solid,fillcolor=color1474b](6.0,0.2)(8.0,-1.8)(10.0,-1.8)
\psdots[dotsize=0.4](6.0,-1.8)
\psdots[dotsize=0.4](6.0,0.2)
\psdots[dotsize=0.4,dotangle=-90.0](8.0,-1.8)
\psdots[dotsize=0.4,dotangle=-90.0](10.0,-1.8)
\psdots[dotsize=0.4,dotangle=-90.0](12.0,-1.8)
\psdots[dotsize=0.4,dotangle=-90.0](8.0,0.2)
\psdots[dotsize=0.4,dotangle=-90.0](10.0,0.2)
\psdots[dotsize=0.4,dotangle=-90.0](8.0,-3.8)
\psdots[dotsize=0.4,dotangle=-90.0](10.0,-3.8)
\psdots[dotsize=0.4,dotangle=-90.0](12.0,-3.8)
\psdots[dotsize=0.4](4.0,-1.8)
\psdots[dotsize=0.4](4.0,0.2)
\psline[linewidth=0.04cm,linestyle=dashed,dash=0.16cm 0.16cm](0.0,4.2)(8.0,-3.8)
\psline[linewidth=0.04cm,linestyle=dashed,dash=0.16cm 0.16cm](8.0,-3.8)(16.0,-3.8)
\psdots[dotsize=0.4](6.0,0.2)
\psdots[dotsize=0.4](6.0,2.2)
\psdots[dotsize=0.4,dotangle=-90.0](8.0,0.2)
\psdots[dotsize=0.4,dotangle=-90.0](10.0,0.2)
\psdots[dotsize=0.4,dotangle=-90.0](8.0,2.2)
\psdots[dotsize=0.4,dotangle=-90.0](10.0,2.2)
\psline[linewidth=0.04cm,linestyle=dashed,dash=0.16cm 0.16cm](16.0,-3.8)(0.0,4.2)
\psdots[dotsize=0.4,dotangle=-90.0](4.0,-1.8)
\psdots[dotsize=0.4,dotangle=-90.0](2.0,2.2)
\psdots[dotsize=0.4,dotangle=-90.0](4.0,2.2)

\pscircle[linewidth=0.04,linestyle=dashed,dash=0.16cm 0.16cm,dimen=outer,fillstyle=solid,fillcolor=black](4.5,2.3){0.1}
\usefont{T1}{ptm}{m}{n}
\rput(4.9153125,2.14){\Large $y_1$}

\end{pspicture}
}
\caption{Illustration from part (a) of the proof of Proposition \ref{prop:full-dim-nec}. In dark grey, the polytope $P$ described by inequalities $-x_2 \leq 0$; $-x_1-x_2 \leq -1$; $x_1 + 2x_2 \leq 2$. In dashed lines, polytope $P_1$. In light grey, polytope $Q_1$.}\label{fig:added}
\end{figure}
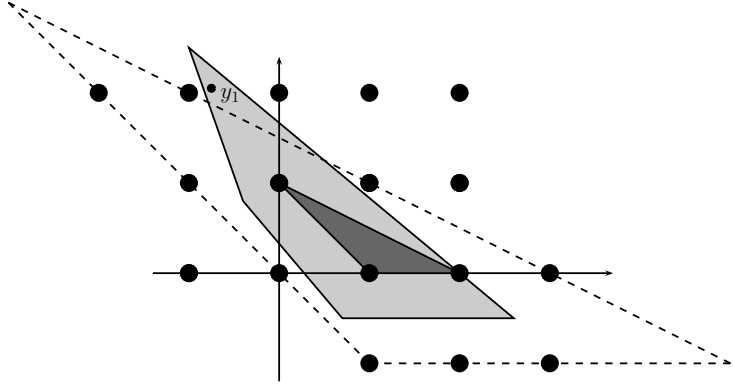

%Moreover, since $P \subseteq Q_k$ and $Q_k$ is
%convex, we can always choose $\tilde y^k$ to be at distance at most
%$k+1$ from $P$. Summing up, we choose $\tilde y^k \in \interior(Q_k)$ with
%$\frac{k}{\alpha}< d(\tilde y^k,P)< k + 1 $.

\begin{Rm}\label{rm:normal}
For every $k\in\N$, the hyperplane $H=\{x \in \R^n : v^k x = v^kx^k\}$ is a supporting hyperplane
for $P$ containing $x^k$.
\end{Rm}

Consider the sequence of normalized vectors $\big\{\frac{v^k}{\|v^k\|}\big\}_{k \in \N}$. Since it is contained in the $(n-1)$-dimensional unit sphere $S$, which is a compact set, it has a subsequence that converges to an element of $S$, say $v$. We denote by ${\cal I}$ the set of indices of this subsequence.
Remark~\ref{rm:normal} shows that every vector $v^k$ belongs to the {\em optimality cone} of $P$, which is defined as the set of vectors $c$ such that the problem $\max\{cx:x\in P\}$ has finite optimum. Since the optimality cone of a polyhedron is a polyhedral cone, in particular it is a closed set. Then $v$ belongs to the optimality cone of $P$. This implies that $v\notin\reccone(P)$, as $\max\{cx:x\in P\}$ is never finite if $c$ is a non-zero vector in $\reccone(P)$.

\medskip

% SUMMARY DELETED
%A summary of this step is as follows: We assume by contradiction that $\tilde z \in \Z^n$ for some $\tilde z \in \interior(P + \linhull v)$. We first construct suitable $w \in \interior(P)$, $\beta > 0$, and $z \in \Z^n$, such that $z=w + \beta v$. We then prove that, for any $k \in {\cal I}$ big enough, a suitable integer translation of $z$ belongs to $Q_k\setminus P$. This gives the desired contradiction, since each $Q_k$ is a relaxation of $P$.
%
%\smallskip

\noindent {\bf (b)} $\boldmath{ P+\linhull v}$ {\bf is lattice-free}. Assume the existence of $\tilde z \in \Z^n$ for some $\tilde z \in \interior(P + \linhull v)$. Observation \ref{obs:harder-than-you-think} implies that there exist $\tilde w \in \interior(P)$ and $\alpha \in \R$ such that $\tilde z = \tilde w + \alpha v$. Since $P$ is a rational polyhedron, $P=P^* + \intcone(R)$, where $R=\{r^1,\dots,r^{|R|}\}$ is a set of integer generators of $\reccone(P)$ and $P^*$ is a suitable polytope such that $\tilde w \in \interior(P^*)$ (for instance, if we let $V$ be the vertex set of $P$, we can take $P^*=\conv\{V \cup \bigcup_{i=1}^{|R|} (\tilde w + r^i)\}$). We denote by $\delta$ the \emph{geometric diameter} of $P^*$, i.e. the maximum distance between two points of $P^*$.

\begin{cl}\label{cl:cos-theta}
There exist a number $\beta > 2\delta$ and points $w \in \interior(P^*)$ and $z \in \Z^n$, such that $z=w + \beta v$.
\end{cl}

\begin{cpf}
%Suppose first that $v$ is rational. Since $\tilde y$ is integral, there exists $\alpha' > 2(\delta + 1) - \alpha$ such that $\tilde y + \alpha 'v \in \Z^n$. Hence, we can set $\bar x = \tilde x$, $\beta = \alpha' + \alpha>  2(\delta + 1)$, and $\bar y = \tilde y + \alpha' v = \tilde x +(\alpha' + \alpha) v = \bar x + \beta v$.
%Now suppose that $v \notin \Q^n$.
We make use of the following fact, shown by Basu et al.~\cite[Lemma 13]{BasuDirichelet} as a consequence of the well-known Dirichlet's approximation theorem: \emph{Given $u \in \Z^n$ and $r\in \R^n$,  then for every  $\varepsilon > 0$ and $\bar \lambda \geq 0$, there exists an integer point at distance less than $\varepsilon$ from the halfline $\{u+ \lambda r : \lambda \geq \bar \lambda\}$.} Apply this result with $u=\tilde z$, $r=v$, $0<\varepsilon<d(\tilde w, \bd(P^*))$, and $\bar \lambda = \max(0,2\delta - \alpha + \varepsilon)$. It guarantees the existence of an integer point $z$ at distance less than $\varepsilon$ from the halfline $\{\tilde z + \lambda v : \lambda \geq 2\delta - \alpha + \varepsilon\} = \{\tilde w + \lambda v : \lambda \geq 2\delta + \varepsilon\}$. Then $z= w + \beta v$ for some point $w$ at distance less than $\varepsilon$ from $\tilde w$ and $\beta > 2\delta $. As $\varepsilon<d(\tilde w, \bd(P^*))$, it follows that $w \in \interior(P^*)$. \end{cpf}

Let $\beta$, $w$, $z$ be as in Claim \ref{cl:cos-theta}. If for $a \in \Z^{|R|}_+$ we define $P^*(a)=P^* + \sum_{i=1,\dots,|R|} a_ir^i$, then $P=\bigcup_{a \in \Z^{|R|}_+} P^*(a)$ (see Figure \ref{fig:thr-(b)}). Recall that, for $k \in \N$, one has $y^k \in Q_k\setminus P_k$, $x^k \in P$, and $v^k=y^k-x^k$. For $k \in \N$, let $a^k \in \Z^{|R|}_+$ be such that $x^k \in P^*(a^k)$. Also, let $w^k= w + \sum_{i=1}^{|R|}a^k_ir^i $. Note that each $w^k$ is a translation of $w$ by an integer combination of integer vectors $r^1,\dots, r^{|R|}$, so that $w^k$ lies in the same translation of $P^*$ as $x^k$. This implies $d(w^k,x^k)\leq \delta$. For each $k \in \N$, we also define $z^k = w^k + \beta v$. One easily checks that $z^k= z + \sum_{i=1}^{|R|}a^k_i r^i$, that is, $z^k$ is a translation of $z$ by integer vectors $r^1,\dots, r^{|R|}$ with the same multipliers as $w^k$, hence it is an integer vector. The proof of (b) is an immediate consequence of the following claim, which contradicts the fact that $Q_k$ is a relaxation of $P$ for every $k \in \N$.

\begin{figure}
\centering
% Generated with LaTeXDraw 2.0.8
% Thu Apr 19 12:54:50 CEST 2012
% \usepackage[usenames,dvipsnames]{pstricks}
% \usepackage{epsfig}
% \usepackage{pst-grad} % For gradients
% \usepackage{pst-plot} % For axes
\scalebox{.7} % Change this value to rescale the drawing.
{
\begin{pspicture}(0,-3.331)(11.79875,3.33)
\definecolor{color47b}{rgb}{0.7,0.7,0.7}
\definecolor{color1648b}{rgb}{0.9,0.9,0.9}
\definecolor{color1656b}{rgb}{0.5,0.5,0.5}

%% Polytope

\psline[linewidth=0.05](13,2.4)(5,0)(5,-3)(13,-3)

%% P^*

\psline[linewidth=0.02,opacity=0.5,linecolor=color47b,linestyle=dashed,dash=0.16cm 0.16cm,fillstyle=solid,fillcolor=color47b](9,1.2)(7,0.6)(7,-2.4)(9,-2.4)
\psline[linewidth=0.0020,opacity=0.5,linecolor=color47b,linestyle=dashed,dash=0.16cm 0.16cm,fillstyle=solid,fillcolor=color1648b](7,-3)(5,-3)(5,0)(7,0.6)
\psline[linewidth=0.02,opacity=0.5,linecolor=color47b,linestyle=dashed,dash=0.16cm 0.16cm,fillstyle=solid,fillcolor=color1656b](9,1.2)(11,1.8)(11,-1.8)(9,-1.8)

%% Rays

\psline[linewidth=0.05cm,arrowsize=0.15cm 2.0,arrowlength=1.4,arrowinset=0.4]{->}(0.6,-1.17)(2.6,-0.59)
\psline[linewidth=0.05cm,arrowsize=0.15cm 2.0,arrowlength=1.4,arrowinset=0.4]{->}(0.6,-2.41)(2.6,-2.41)

\usefont{T1}{ptm}{m}{n}
\rput(0.40421876,-2.515){\large $r_1$}
\usefont{T1}{ptm}{m}{n}
\rput(0.36421874,-1.175){\large $r_2$}

%% \bar x^k with arrows

\psellipse[linewidth=0.04,dimen=outer,fillstyle=solid,fillcolor=black](6.606875,-2.5)(0.09,0.09) %% \bar x
\psellipse[linewidth=0.04,dimen=outer,fillstyle=solid,fillcolor=black](8.686875,-1.84)(0.09,0.09) %% \bar x ^1
\psellipse[linewidth=0.04,dimen=outer,fillstyle=solid,fillcolor=black](10.766875,-1.22)(0.09,0.09) %% \bar x ^2

\psline[linewidth=0.05cm,arrowsize=0.15cm 2.0,arrowlength=1.4,arrowinset=0.4]{->}(6.696875,-2.45)(8.616875,-1.87) %% \bar x -> \bar x ^1
\psline[linewidth=0.05cm,arrowsize=0.15cm 2.0,arrowlength=1.4,arrowinset=0.4]{->}(8.77,-1.8)(10.77,-1.2)%% \bar x^1 -> \bar x^2

\usefont{T1}{ptm}{m}{n}
\rput(6.2,-1.5){$\beta v$}

\usefont{T1}{ptm}{m}{n}
\rput(6.688281,-2.82){$w$}
\usefont{T1}{ptm}{m}{n}
\rput(8.738282,-2.14){$w^1$}
\usefont{T1}{ptm}{m}{n}
\rput(10.678281,-1.54){$w^2$}

%\psline[linewidth=0.05cm,arrowsize=0.05291667cm 2.0,arrowlength=1.4,arrowinset=0.4]{->}(8.77,2.6)(10.77,3.2)

%% x^1, x^2

\psellipse[linewidth=0.04,dimen=outer,fillstyle=solid,fillcolor=black](7.746875,0.85)(0.09,0.09)
\psellipse[linewidth=0.04,dimen=outer,fillstyle=solid,fillcolor=black](9.3,1.3)(0.09,0.09)

\usefont{T1}{ptm}{m}{n}
\rput(9.5,0.95){$x^2$}
\usefont{T1}{ptm}{m}{n}
\rput(7.7682815,0.5){$x^1$}

%% \bar y^k with arrows

\psellipse[linewidth=0.04,dimen=outer,fillstyle=solid,fillcolor=black](6.626875,1.92)(0.09,0.09)
\psellipse[linewidth=0.04,dimen=outer,fillstyle=solid,fillcolor=black](8.686875,2.6)(0.09,0.09)
\psellipse[linewidth=0.04,dimen=outer,fillstyle=solid,fillcolor=black](10.766875,3.24)(0.09,0.09)

\psline[linewidth=0.05cm,linestyle=dashed,dash=0.16cm 0.16cm,arrowsize=0.15cm 2.0,arrowlength=1.4,arrowinset=0.4]{->}(6.626875,-2.3)(6.626875,1.87)  %% \bar x -> \bar y

\psline[linewidth=0.05cm,linestyle=dashed,dash=0.16cm 0.16cm,arrowsize=0.15cm 2.0,arrowlength=1.4,arrowinset=0.4]{->}(8.686875,-1.84) (8.686875,2.5) %% \bar x^1 -> \bar y^1

\psline[linewidth=0.05cm,linestyle=dashed,dash=0.16cm 0.16cm,arrowsize=0.15cm 2.0,arrowlength=1.4,arrowinset=0.4]{->}(10.766875,-1.22) (10.766875,3.1) %% \bar x^1 -> \bar y^1

%\psline[linewidth=0.05cm,arrowsize=0.05291667cm 2.0,arrowlength=1.4,arrowinset=0.4]{->}(6.696875,1.97)(8.616875,2.55)

\usefont{T1}{ptm}{m}{n}
\rput(6.978281,1.78){$z$}
\usefont{T1}{ptm}{m}{n}
\rput(9.008282,2.46){$z^1$}
\usefont{T1}{ptm}{m}{n}
\rput(11.1,3.14){$z^2$}
\usefont{T1}{ptm}{m}{n}
\end{pspicture}
}
\caption{Illustration from part (b) of the proof of Proposition \ref{prop:full-dim-nec}. On the left: the vectors $r^1$ and $r^2$ from $\reccone(P)$. On the right: polytope $P$, and its covering with polyhedra $P^*(a)$, $a \in\Z_+$. In increasingly darker shadows of grey: $P^*=P^*{0 \choose 0}$, $P^*{0 \choose 1}$, $P^*{0 \choose 2}$. If moreover $x^1 \in P^*{0 \choose 1}$ and $x^2 \in P^*{0 \choose 2}$, we obtain $w^1$, $w^2$, $z^1$, $z^2$ as in the picture.}\label{fig:thr-(b)}
\end{figure}

\begin{cl}\label{cl:final-full}
$z^k \in Q_k \setminus P$ for each $k \in {\cal I}$ large enough.
\end{cl}

%This concludes the proof of (b).

\begin{cpf}
We first show that $z^k \notin P$ for $k \in {\cal I}$ large enough. By Remark \ref{rm:normal}, the hyperplane $H=\{ x \in \R^n : v^k x = v^kx^k\}$ is a supporting hyperplane of $P$ containing $x^k$. Let $\gamma \in \R$ be such that $w^k + \gamma \frac{v^k}{\|v^k\|} \in H$. Note that $\gamma$ is well-defined since $v^k$ is normal to $H$, and moreover $\gamma \geq 0$, as $w^k \in P$. Since $\frac{v^k}{\|v^k\|}$ is a unit vector normal to $H$, one has
\begin{equation}\label{eq:gamma}
\gamma = d(w^k, H) \leq d(w^k, x^k) \leq \delta,
\end{equation}
where the first inequality comes from the fact that $x^k \in H$.

Let now $\phi$ be the angle between $v^k$ and $v$, and $k \in {\cal I}$ be large enough, so that $0 \leq \phi \leq \frac{\pi}{3}$. Let $ \sigma \in \R$ be such that $w^k + \sigma v \in H$ (recall that $\|v\|=1$). By simple trigonometric arguments and by \eqref{eq:gamma}, we obtain $\sigma = \frac{\gamma}{\cos \phi} \leq 2 \delta$. Hence, points $w^k + \lambda v$ with $\lambda > 2\delta $ do not belong to $P$. In particular, $z^k \notin P$, since $z^k = w^k + \beta v$ with $\beta > 2\delta$ from Claim \ref{cl:cos-theta}.

\smallskip

We now show that $z^k \in Q_k$ for $k \in {\cal I}$ large enough.  Let $\varepsilon$ be such that $0<\varepsilon<d(w,\bd(P^*))$. Note that $\varepsilon<d(w^k,\bd(P))$ for all $k \in \N$.  For each $k\in\N$, let $H^k$ be the hyperplane with normal $v$ containing point $w^k$, i.e., $H^k=\{x:vx=vw^k\}$. Define $B^k$ to be the $(n-1)$-ball of radius $\varepsilon$ lying on $H^k$ and centered at $w^k$. Note that $B^k\subseteq P$ and $z^k-B^k$ is the $(n-1)$-ball of radius $\varepsilon$ centered at $\beta v$ and lying on the hyperplane $\{x : vx=\beta\}$. Hence the cone $C$ generated by $\{z^k- x:x\in B^k\}$ does not depend on $k$, and it is indeed a \emph{cone of revolution} defined by direction $v$ and some angle $0<2\theta<\pi/2$ (see Figure \ref{fig:thr-1(b)bis}), i.e. $C$ is the set of vectors of $\R^n$ that form an angle of at most $2\theta$ with $v$. Note that \begin{equation}\label{eq:star}z^k\in\conv(x,B^k)\quad \hbox{for every $x\in z^k+C$}.\end{equation} Now let $D$ be the cone of revolution of direction $v$ and angle $\theta$. Note that $D$ is strictly contained in cone $C$. Since $d(x^k,w^k)\le\delta$ for all $k$, there exists a positive number $\tau$ such that $\{x\in x^k+D: d(x,x^k)\ge\tau\}\subseteq z^k+C$ for all $k \in \N$. Since $\lim_{k\rightarrow + \infty} d(y^k,P) = + \infty$, for $k \in \N$ large enough $d(y^k,x^k)= d(y^k,P)\ge \tau$. If moreover we take $k\in\cal I$ large enough so that the angle between $v^k$ and $v$ is at most $\theta$, one has $y^k \in x^k+D$ and consequently $y^k\in z^k+C$. Because $y^k \in Q_k$ and \eqref{eq:star}, we conclude that $z^k \in Q_k$, as required.
\end{cpf}

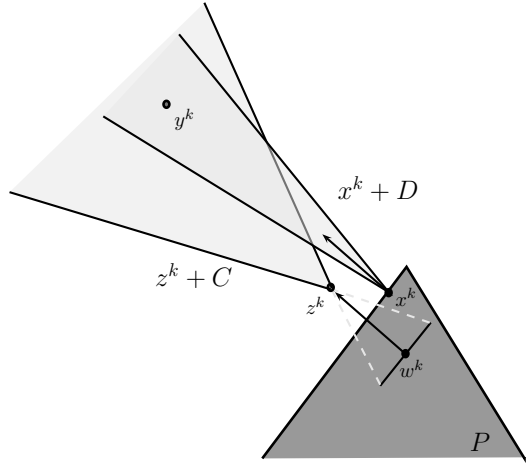
\begin{figure}
\centering% Generated with LaTeXDraw 2.0.8
% Tue May 15 20:31:50 CEST 2012
% \usepackage[usenames,dvipsnames]{pstricks}
% \usepackage{epsfig}
% \usepackage{pst-grad} % For gradients
% \usepackage{pst-plot} % For axes
% Generated with LaTeXDraw 2.0.8
% Wed May 30 18:30:31 CEST 2012
% \usepackage[usenames,dvipsnames]{pstricks}
% \usepackage{epsfig}
% \usepackage{pst-grad} % For gradients
% \usepackage{pst-plot} % For axes
\scalebox{.7} % Change this value to rescale the drawing.
{
\begin{pspicture}(0,-4.3554187)(9.805,4.350419)
\definecolor{color286b}{rgb}{0.6,0.6,0.6}
\definecolor{color289b}{rgb}{0.9,0.9,0.9}
\psline[linewidth=0.05,fillstyle=solid,fillcolor=color286b](4.8,-4.330419)(7.54,-0.68958116)(9.78,-4.330419)(9.78,-4.3095813)
\psline[linewidth=0.04,fillstyle=solid,opacity=.5,fillcolor=color289b](0.0,0.73041886)(6.14,-1.1295811)(3.7,4.330419)
\psline[linewidth=0.04,fillstyle=solid,opacity=.5,fillcolor=color289b](1.78,2.1704187)(7.2,-1.1695812)(3.22,3.730419)
\usefont{T1}{ptm}{m}{n}
\rput(8.935312,-4.009581){\Large $P$}
\psellipse[linewidth=0.05,dimen=outer,fillstyle=solid,fillcolor=black](7.52,-2.3395813)(0.08,0.09)
\usefont{T1}{ptm}{m}{n}
\rput(7.641406,-2.6195812){$w^k$}
\usefont{T1}{ptm}{m}{n}
\rput(5.811406,-1.4195812){$z^k$}
\psline[linewidth=0.04cm,arrowsize=0.05291667cm 2.0,arrowlength=1.4,arrowinset=0.4]{->}(7.48,-2.309581)(6.18,-1.1695812)
\psline[linewidth=0.04cm](8.0,-1.7495811)(7.04,-2.9495811)
\psline[linewidth=0.04cm,linecolor=color289b,linestyle=dashed,dash=0.16cm 0.16cm](7.98,-1.7495811)(6.1,-1.1095811)
\psline[linewidth=0.04cm,linecolor=color289b,linestyle=dashed,dash=0.16cm 0.16cm](7.04,-2.9295812)(6.12,-1.1295811)
\psellipse[linewidth=0.05,dimen=outer,fillstyle=solid,fillcolor=black](7.2,-1.1795812)(0.08,0.09)
\usefont{T1}{ptm}{m}{n}
\rput(7.5314063,-1.2595811){$x^k$}
\psellipse[linewidth=0.05,dimen=outer,opacity=.5,fillstyle=solid,fillcolor=black](6.1,-1.0795811)(0.08,0.09)
\psellipse[linewidth=0.05,dimen=outer,opacity=.5,fillstyle=solid,fillcolor=black](2.98,2.4004188)(0.08,0.09)
\usefont{T1}{ptm}{m}{n}
\rput(3.3214064,2.1204188){$y^k$}
\usefont{T1}{ptm}{m}{n}
\rput(3.5,-.85){\Large $z^k+C$}
\usefont{T1}{ptm}{m}{n}
\rput(7,.8){\Large $x^k+D$}
\psline[linewidth=0.04cm,arrowsize=0.05291667cm 2.0,arrowlength=1.4,arrowinset=0.4]{->}(7.24,-1.2095811)(5.94,-0.06958115)
\end{pspicture}

}
\caption{Illustration from the proof of Claim \ref{cl:final-full}. Both $C$ and $D$ are cones of revolution defined by direction $v$, with angles respectively $2\theta$ and $\theta$.}\label{fig:thr-1(b)bis}\end{figure}

\medskip

\noindent {\bf (c)} $\boldmath{v}$ {\bf can be assumed wlog to be integral}. In~\cite[Theorem 2]{BasuDirichelet} (see also~\cite{Lov}) it is proved that a maximal lattice-free convex set is either an irrational affine hyperplane of $\R^n$, or a polyhedron $Q + L$, where $Q$ is a polytope and $L$ is a rational linear space. $P+\linhull v$ is lattice-free, thus it is contained in a maximal lattice-free convex set. Since it is full-dimensional, it is not contained in an irrational hyperplane. It follows that $P \subseteq Q+L$, with $Q,L$ as above. Moreover, $L$ has dimension at least $1$, since it contains $v$. Pick a set $S\subseteq \Z^n$ of generators of $L$ such that $v$ belongs to the cone generated by $S$. Since $v \notin \reccone(P)$, then $s \notin \reccone(P)$ for at least one $s \in S$. Moreover, $P+\linhull s\subseteq Q+L$ and it is full-dimensional, hence it is lattice-free. We can then replace $v$ by $s$. This concludes the proof of Proposition \ref{prop:full-dim-nec} and Theorem~\ref{th:main}.\end{proof}

\section{On some polyhedra with finite reverse CG rank}\label{sec:cg-more}

In this section we investigate the behavior of the reverse CG rank for two classes of polyhedra. Namely, let ${\cal A}$ be the family of integral polyhedra $P$ such that $(i)$ no facet of $P$ is relatively lattice-free and $(ii)$ either $P$ is not relatively lattice-free or $P$ is full-dimensional; also, let ${\cal B}$ be the family of integral polyhedra that are not relatively lattice-free. We show the following.

\begin{thm}\label{thr:bounds}
\begin{enumerate}[\upshape(i)]
\item For each $n \in \N$,
$\sup\{r^*(P): P \subseteq \R^n, P \in {\cal A}\}\leq \lambda(n)$, where $\lambda$  is a function depending on $n$ only.
\item For each $n, k \in \N$, $\sup\{r^*(P): P \subseteq \R^n, P \in {\cal B}, |\relint(P) \cap \Z^n| \leq k\}\leq \mu(n,k)$, where $\mu$ is a function depending on $n$ and $k$ only.
\end{enumerate}
\end{thm}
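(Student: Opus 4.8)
The plan is to bound $r^*(P)$ for each of the two families by bounding, over all relaxations $Q$ of $P$, the number of iterations of the CG closure needed to reach $P$, using Lemma \ref{lem:upper} as the driving tool. The key observation is that Lemma \ref{lem:upper} reduces everything to a single quantity: for each facet-defining inequality $ax \le \delta$ of $P$ (with $a \in \Z^n$ primitive, $\delta \in \Z$), we need a bound $\delta'$ on $\max\{ax : x \in Q\}$ that is valid for \emph{every} relaxation $Q$; then $ax \le \delta$ becomes valid after at most $(\lfloor \delta' \rfloor - \delta) f(n) + 1$ CG iterations, and taking the maximum over the (finitely many) facets of $P$ gives $r^*(P)$. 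So the heart of both parts is: \emph{show that the facet defining inequalities of a polyhedron in $\mathcal A$ (resp. $\mathcal B$) cannot be ``pushed out'' arbitrarily far by a relaxation}, i.e. that $Q \subseteq P_k$ for a $k$ bounded in terms of $n$ (resp. $n$ and $|\relint(P)\cap\Z^n|$). Equivalently, by the logic of Proposition \ref{prop:full-dim-nec} (statement $(1)\Leftrightarrow(2)$), it suffices to produce such a $k$; we will do this directly via the geometric characterization rather than re-deriving it.

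For part (i), the strategy is to show that no polyhedron in $\mathcal A$ satisfies condition $(3)$ of Proposition \ref{prop:full-dim-nec} (in the full-dimensional case; the non-full-dimensional case of $\mathcal A$ is handled by first applying Theorem \ref{th:main-non-full-dim} to reduce, after $f(n)$ iterations, to $\aff(P)$ and then invoking Observation \ref{obs:lf}, which sends us back to the full-dimensional case inside $\aff(P)$). So assume $P$ is full-dimensional with no relatively lattice-free facet, and suppose for contradiction that $P + \linhull v$ is lattice-free for some $v \in \Z^n \sm \reccone(P)$. Then $P + \linhull v$ lies in a maximal lattice-free convex set $M = R + L$ (polytope plus rational linear space, by \cite[Theorem 2]{BasuDirichelet}), with $v \in L$. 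One then argues that some facet $F$ of $P$ must already be contained in $\relint$ of a facet of $M$ — because the only way a full-dimensional polyhedron can sit inside $M$ with a whole line direction $v \notin \reccone(P)$ added is for $P$ to "span" $M$ in the $v$-direction, forcing a facet of $P$ onto the boundary of $M$; and facets of $M$ are relatively lattice-free (this is standard for maximal lattice-free sets). This makes that facet $F$ of $P$ relatively lattice-free, contradicting the definition of $\mathcal A$. With $(3)$ ruled out, $(2)$ fails, so there is a uniform $k$ with $Q \subseteq P_k$ for all relaxations $Q$; but we must make $k$ \emph{effective} in $n$, which requires quantifying the above argument: the "distance" a facet of $P$ can stray from $\bd(M)$ before $M$ stops being lattice-free is controlled by the flatness theorem / bounds on maximal lattice-free bodies, giving $k = \lambda_0(n)$, and then $\lambda(n) = k\, f(n) + 1 + f(n)$ (the extra $f(n)$ for the non-full-dimensional reduction).

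For part (ii), the family $\mathcal B$ allows relatively lattice-free facets, so the above obstruction can genuinely occur — but only in a controlled way, because $P$ itself is not relatively lattice-free and has at most $k$ interior lattice points. Here I expect the bound to come from a Minkowski-type volume argument: if some relaxation $Q$ pushes a facet inequality $ax \le \delta$ of $P$ out to $ax \le \delta'$ with $\delta' - \delta$ large, then the slab between the hyperplane $ax=\delta$ and $ax=\delta'$, intersected with (the relevant translate/truncation of) $P + \linhull v$, becomes a large centrally-symmetrizable or large-volume lattice-free region unless it is blocked by one of the $\le k$ interior integer points of $P$; counting how many "layers" the $k$ points can block, together with the flatness theorem bounding the width of lattice-free convex bodies, yields $\delta' - \delta \le \mu_0(n,k)$. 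Then $\mu(n,k) = \mu_0(n,k) f(n) + 1 + f(n)$.

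The main obstacle I anticipate is making the geometric arguments \emph{effective and uniform}: the qualitative statements ("$P+\linhull v$ not lattice-free", "some facet forced onto $\bd(M)$") are already essentially in Proposition \ref{prop:full-dim-nec}, but converting them into an explicit $k = k(n)$ resp. $k(n,k)$ — i.e. bounding how far a relaxation can push the facets before integrality is destroyed — is the real content, and it is precisely where geometry-of-numbers input (flatness theorem, structure and size bounds for maximal lattice-free convex bodies, Minkowski's convex body theorem) must be imported. A secondary technical point is the bookkeeping for recession cones and for the non-full-dimensional members of $\mathcal A$, but that is routine given Theorem \ref{th:main-non-full-dim} and Observation \ref{obs:lf}.
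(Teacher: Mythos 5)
Your overall skeleton for part (i) --- bound, uniformly over all relaxations $Q$, how far each facet inequality of $P$ can be violated, then convert that bound into a CG-rank bound via Lemma~\ref{lem:upper}, handling non-full-dimensional members of $\mathcal A$ by first applying Theorem~\ref{th:main-non-full-dim} --- is exactly the paper's skeleton. But the step you defer (``make $k$ effective in $n$'') is the entire content of the theorem, and the route you sketch for it does not work. Ruling out condition (3) of Proposition~\ref{prop:full-dim-nec} (which, incidentally, has a simpler proof for $P\in\mathcal A$: for any $v\notin\reccone(P)$ some facet normal $a$ satisfies $av>0$, and the lattice point in the relative interior of that facet lies in $\interior(P+\linhull v)$) only yields, through the equivalence $(1)\Leftrightarrow(2)$, a threshold $k$ depending on the particular $P$; moreover the proof of $(2)\Rightarrow(3)$ is a compactness argument (convergent subsequence of normalized directions $v^k$), so it cannot be ``quantified'' as you propose. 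Since for every fixed $n$ the family $\mathcal A$ contains infinitely many pairwise non-equivalent polytopes (e.g.\ the dilated simplices $\conv\{0,Ne^1,\dots,Ne^n\}$, $N\ge 2$), per-polyhedron finiteness gives no $\lambda(n)$. Your quantification target is also misplaced: what must be bounded is how far a \emph{relaxation} can extend beyond a facet of $P$, not how far a facet of $P$ can ``stray from $\bd(M)$'' of a maximal lattice-free set --- once (3) fails there is no such $M$ containing $P+\linhull v$ at all. The missing ingredient is the paper's Lemma~\ref{lem:aho}: by Theorem~\ref{thm:ellipsoid}, each facet that is not relatively lattice-free contains a centrally symmetric $(d-1)$-polytope $E$ of volume $\phi(d-1)$ whose only lattice point is its center; if a relaxation contained a point $\bar x$ beyond the facet at distance exceeding an explicit $\bar\delta(n)$, then the symmetrization of $\conv(E,\bar x)$ would be a centrally symmetric convex body of volume greater than $2^d$ whose only lattice point is the origin, contradicting Minkowski's Convex Body Theorem. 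Nothing playing this role appears in your proposal, so part (i) has a genuine gap.

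For part (ii) the gap is larger, and the paper's actual route is completely different and much simpler than what you attempt. Members of $\mathcal B$ may well have relatively lattice-free facets, so the facet-by-facet mechanism above is unavailable, and your replacement (``the slab becomes a large lattice-free region unless blocked by one of the $\le k$ interior points; count how many layers the $k$ points can block'') is an unproved assertion rather than an argument --- no candidate direction $v$ is even specified, and no volume lower bound is available on a relatively lattice-free facet. The paper instead observes that every $P\in\mathcal B$ with $|\relint(P)\cap\Z^n|\le k$ is a polytope (an unbounded integral polyhedron with an interior lattice point has infinitely many), invokes Lagarias--Ziegler to conclude that, up to unimodular transformations, there are only finitely many such polytopes for fixed $n$ and $k$, notes that each of them has finite $r^*$ by Theorem~\ref{th:main}, and that $r^*$ is invariant under unimodular transformations by Lemma~\ref{obs:unimodular-preserves}; the maximum over the finitely many equivalence classes is $\mu(n,k)$. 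This is non-constructive but complete; if you want an explicit $\mu$, you would have to actually supply the quantitative lemma your sketch only gestures at.
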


We build on the following result~\cite[Theorem 12]{non-full-dim}.

\begin{thm}\label{thm:ellipsoid}
There exists a function $\phi: \N \rightarrow \R_+\setminus \{0\}$ such that every integral non-lattice-free polyhedron $P\subseteq\R^n$ contains a centrally symmetric polytope of volume $\phi(n)$, whose only integer point is its center.
\end{thm}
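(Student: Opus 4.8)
The plan is to reduce the statement to a volume bound for lattice-free convex bodies combined with a covering/averaging argument, and then to extract a centrally symmetric polytope of controlled volume from the (possibly large) lattice-free body we find. First I would use the hypothesis that $P$ is integral and non-lattice-free: by definition there is an integer point $p$ in $\interior(P)$. After translating by $-p$ (which is a unimodular transformation, so it preserves everything we care about by Lemma~\ref{obs:unimodular-preserves} and preserves volumes), we may assume $\mathbf 0\in\interior(P)$ and $\mathbf 0\in\Z^n$. The point of the theorem is that, although $P$ itself may be enormous, we can always find inside it a body that is centrally symmetric about $\mathbf 0$ and lattice-free except for $\mathbf 0$, with volume bounded \emph{below} by a universal constant depending only on $n$.

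The natural candidate is $K := P\cap(-P)$, the symmetrization of $P$ about the origin: it is a centrally symmetric convex body containing $\mathbf 0$ in its interior, it is contained in $P$, and its only integer point is $\mathbf 0$ — indeed if $z\in K\cap\Z^n$ with $z\neq\mathbf 0$ then $z$ and $-z$ both lie in $P$, so the whole segment $[-z,z]\ni\mathbf 0$ lies in $P$ and $z$ would be a nonzero integer point of $P$ witnessing... wait, that is not yet a contradiction since $P$ may well contain other integer points. So the cleaner route: $K$ is centrally symmetric about $\mathbf 0$ and $\mathbf 0\in\interior K$; if $z\in K\cap\Z^n$, $z\neq\mathbf 0$, then $z,-z\in K\subseteq P\cap(-P)$, hence $2z\notin$ — no. Let me instead argue directly that $K$ has a \emph{large} volume only when it is lattice-free: if $\vol(K)\ge 2^n$ then by Minkowski's Convex Body Theorem $K$ contains a nonzero integer point. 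So either $\vol(K)<2^n$, in which case $K$ need not be lattice-free and I must shrink it; or I abandon $K$ and argue on $P$ directly. The right statement to invoke is the quantitative \emph{flatness / volume bound}: there is a constant $c(n)$ such that every lattice-free convex body in $\R^n$ has volume at most $c(n)$ — but that is an \emph{upper} bound and goes the wrong way. So the true mechanism must be: since $\mathbf 0\in\interior P$, the body $P$ contains a full-dimensional neighborhood of $\mathbf 0$; we want the \emph{largest} centrally symmetric body about $\mathbf 0$ inside $P$ that still has $\mathbf 0$ as its only integer point, and show its volume is bounded below. Scaling $P$ down about $\mathbf 0$ by a factor $t\in(0,1]$ gives $tP$, still containing $\mathbf 0$ in its interior; for $t$ small enough $tP$ is lattice-free except for $\mathbf 0$. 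Then $tP\cap(-tP)$ is centrally symmetric about $\mathbf 0$, its only integer point is $\mathbf 0$, and the issue is to show $t$ cannot be forced to be arbitrarily small: the obstruction to enlarging $t$ is that some nonzero integer point enters $tP$, i.e. $t$ is governed by $\min\{\|x\|_{P}: x\in\Z^n\sm\{\mathbf 0\}\}$ type quantities where $\|\cdot\|_P$ is the gauge; and by the characterization of maximal lattice-free bodies (Theorem 2 of~\cite{BasuDirichelet}, used in part (c) above), a maximal lattice-free body through $\mathbf 0$ with $\mathbf 0$ in its interior has volume bounded below — this is where the dependence on $n$ alone comes from.

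The key steps, in order, would be: (1) translate so that an interior integer point of $P$ is at $\mathbf 0$; (2) symmetrize to $K=P\cap(-P)$, a centrally symmetric convex body about $\mathbf 0$; (3) take the dilation $K^\star = t^\star K$ where $t^\star=\sup\{t>0 : tK\cap\Z^n=\{\mathbf 0\}\}$, and observe $t^\star>0$ since $\mathbf 0\in\interior K$; (4) note $K^\star$ is a centrally symmetric lattice-free-except-at-center convex body, hence contained in a maximal such body $M$; (5) invoke the classification of maximal lattice-free convex sets to see $M$ (being full-dimensional, so not an irrational hyperplane) is of the form $Q+L$ with $Q$ a polytope, and derive a lower bound $\psi(n)$ on the volume of the portion of $M$ that $K^\star$ must fill — more precisely, argue $K^\star$ contains a simplex-like or ball-like body of volume $\ge\phi(n)$ because at the scale $t^\star$ some lattice point sits on $\bd(K^\star)$ and $\mathbf 0\in\interior(K^\star)$, forcing $K^\star$ to span a definite amount of room; (6) finally replace $K^\star$ by an inscribed centrally symmetric \emph{polytope} of volume $\ge\phi(n)$ (e.g. the convex hull of finitely many symmetric pairs of points, using that a convex body can be approximated from inside by inscribed polytopes to within any fixed fraction of its volume), which is the desired object and is contained in $P$ since $K^\star\subseteq K\subseteq P$.

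The main obstacle is step (5): pinning down \emph{why} a centrally symmetric convex body about an integer point, maximal with respect to containing no other integer point, must have volume bounded below by a function of $n$ alone. Naively one expects such bodies to be able to be "thin," but central symmetry about a lattice point is exactly the hypothesis of Minkowski's theorem, so $\vol(K^\star)<2^n$; the reverse inequality — a \emph{lower} bound — is the delicate part and is presumably precisely the content of the cited Theorem 12 of~\cite{non-full-dim}, so in writing the proof I would either cite that result directly or reconstruct its argument: take the lattice point $z\neq\mathbf 0$ on $\bd(K^\star)$, use central symmetry to get $-z\in\bd(K^\star)$, consider the cross-section of $K^\star$ by a hyperplane separating $\mathbf 0$ from the lattice points nearest to the segment $[-z,z]$, and apply the flatness theorem in dimension $n-1$ to bound the "width" of $K^\star$ in every lattice direction from below — i.e. no lattice hyperplane can slice $K^\star$ too thinly, because then some integer translate would be forced inside, contradicting maximality. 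Assembling these width lower bounds into a volume lower bound is then routine (a convex body containing $\mathbf 0$ in its interior whose width in $n$ linearly independent lattice directions is each $\ge w(n)>0$ has volume $\ge$ some $\phi(n)>0$). I would flag that if one is content to cite~\cite{non-full-dim} for the existence of \emph{some} lattice-free body of volume $\phi(n)$ inside $P$, the only remaining work is the inscribed-polytope approximation in step (6), which is standard.
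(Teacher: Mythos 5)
The paper itself does not prove Theorem~\ref{thm:ellipsoid}: it is exactly \cite[Theorem 12]{non-full-dim} and is invoked by citation, so your fallback option of ``cite that result directly'' coincides with what the paper does. The reconstruction you sketch, however, has a genuine gap at the step you yourself flag as the main obstacle, and the gap is not repairable within your construction. Symmetrizing $P$ about an interior lattice point and then dilating uniformly until no other lattice point remains cannot yield a volume bound depending on $n$ only, even for integral $P$. Take $P=\conv\{(-1,0),(N,1),(N,-1)\}\subseteq\R^2$, which is integral and has $(0,0)$ in its interior. Then $K=P\cap(-P)$ is the rhombus $\conv\{(\pm1,0),(0,\pm\tfrac{1}{N+1})\}$ of area $\tfrac{2}{N+1}$, so every subset of it -- in particular your $K^\star$ -- has arbitrarily small area. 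Choosing a better center does not help: symmetrizing about $(k,0)$ with $k\approx N/2$ gives a body of length about $N$ in the $e_1$-direction, so the largest admissible uniform dilation factor is $O(1/N)$ and the dilated body again has area $O(1/N)$. (The theorem does hold for this $P$, witnessed by the anisotropic rhombus $\conv\{(k\pm(1-\delta),0),(k,\pm\tfrac{1}{2})\}$ with $k\approx N/2$; but that body is not a dilate of any symmetrization of $P$, which shows that the construction, and not merely the choice of center, is what fails.) The mechanisms you offer for the lower bound also do not supply one: a lattice point on $\bd(K^\star)$ bounds the diameter from below, not the volume; the flatness theorem bounds the lattice width of lattice-free bodies from \emph{above}; and the classification from \cite{BasuDirichelet} does not apply to your $M$, which is not lattice-free (it has $0$ in its interior), while $K^\star$, being constrained to lie inside $P$, is not maximal in any unconstrained family anyway. (Minor additional issues: if $t^\star>1$ then $t^\star K\not\subseteq P$ in general, and the supremum need not be attained with $t^\star K\cap\Z^n=\{0\}$.)

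More structurally, after locating one interior lattice point your argument never uses the integrality of $P$ again, and the statement is false under the weaker hypothesis that $P$ merely contains a lattice point in its interior: the box $[-1,1]\times[-\varepsilon,\varepsilon]$ has $(0,0)$ in its interior, yet every centrally symmetric polytope contained in it has volume at most $4\varepsilon$. Hence any correct proof must exploit $P=\conv(P\cap\Z^n)$ beyond producing a single interior lattice point -- for instance by building the symmetric body from lattice points of $P$ and choosing its center among them judiciously -- and this is precisely the content carried by the cited \cite[Theorem 12]{non-full-dim}. Your step (6), inscribing a polytope while retaining a constant fraction of the volume, is indeed routine, but it is not where the difficulty lies.
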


%\begin{pf}
%Let $P\subseteq \R^n$ be an unbounded integral non-lattice-free polyhedron: we show it contains an integral non-lattice-free polytope of the same dimension. The statement then follows by Theorem \ref{thr:ellipsoid}. It is well-known (see e.g.~\cite{sc}) that $P=\conv(S) +\cone (R)$, where $S=\{s^1,\dots, s^{|S|}\}$ is a set made of exactly one vector from each minimal face of $P$, whereas $R=\{r^1,\dots, r^{|R|}\}$ is the set of extreme rays of $\reccone(R)$, which we can assume to be integer. We can also suppose that vectors from $S$ are integer, for $P$ is integral. Let $\bar x \in \interior(P)\cap \Z^n$ be obtained with multipliers $\mu_{1},\dots,\mu_{|S|}$ and $\gamma_{1},\dots,\gamma_{|R|}$. Note that we can assume that all multipliers are strictly positive and, by appropriate scaling, that $\mu_{1}> \sum_{i=1}^{|R|}\gamma_i$. Let now $S'= S \cup \{s^1+r^i\}_{i=1}^{|R|}$. One readily verifies that $P'=\conv(S')$ is integral, $\bar x \in \relint(P')$ and $\aff(P')=\aff(P)$, concluding the proof.
%\end{pf}

The proof of the following lemma uses Minkowski's Convex Body Theorem in a way similar to the proof of~\cite[Theorem 1]{non-full-dim}.

\begin{lem}\label{lem:aho}
Let $P\subseteq \R^n$ be an integral polyhedron. Let $cx \leq \delta$ be a valid inequality for $P$ inducing a facet of $P$ that is not relatively lattice-free. Then, for every relaxation $Q$ of $P$ contained in $\aff(P)$, $cx\le\delta$ is valid for $Q^{(p)}$, where $p$ depends on $n$ only.
\end{lem}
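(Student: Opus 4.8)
The plan is to reduce to the full-dimensional case and then use the centrally symmetric polytope furnished by Theorem~\ref{thm:ellipsoid}, which lives inside the facet, to produce a bound on how far a relaxation can stick out past the hyperplane $cx=\delta$, after which Lemma~\ref{lem:upper} finishes the job. First I would apply a unimodular transformation (Lemma~\ref{obs:unimodular-preserves}, together with the normalization discussed after it) so that $\aff(P)$ is a coordinate subspace $\{x_{d+1}=\dots=x_n=0\}$; since every relaxation $Q$ under consideration is already contained in $\aff(P)$, this lets me work entirely in $\R^d$ and treat $P$ as a full-dimensional integral polyhedron there, with $F=\{x\in P: cx=\delta\}$ a facet that is not relatively lattice-free, i.e. $\relint(F)$ contains an integer point.

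Next I would invoke Theorem~\ref{thm:ellipsoid} applied to the facet $F$: since $F$ is an integral polyhedron (a face of an integral polyhedron) that is not lattice-free within its own affine hull (it is not relatively lattice-free, and a facet is full-dimensional within $\aff(F)$), it contains an $(n-1)$-dimensional centrally symmetric polytope $K$ of volume $\phi(n-1)$ whose only integer point is its center $z\in\Z^n$. The point is that $K$ gives a ``fat'' lattice-free obstruction sitting right on the facet hyperplane $H=\{x:cx=\delta\}$. Now suppose some relaxation $Q\subseteq\aff(P)$ were valid only for $cx\le\delta'$ with $\delta'$ large. Then $Q$ contains $P$, hence contains $K$, hence contains a point $y$ with $cy$ close to $\delta'$ and lying ``above'' the center of $K$. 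The cone $\conv(y,K)$ is contained in $Q$ (since $Q$ is convex and contains both $y$ and $K\subseteq P$), and the key geometric estimate is that, because $K$ has volume bounded below by $\phi(n-1)$ and contains $z$ in the relative interior of the slice $H\cap Q$, the translate $z + \lambda\,(y-z)/\|y-z\|$ or more precisely the lattice point obtained by moving off $z$ transversally stays inside $\conv(y,K)\subseteq Q$ once $\delta'-\delta$ is large enough — contradicting $Q\cap\Z^n=P\cap\Z^n$ if that lattice point is not in $P$. To make this clean I would instead argue directly: using Minkowski's theorem (as in~\cite{non-full-dim}), the bounded-volume symmetric body $K$ forces that whenever $Q$ extends past $H$ by more than a constant $\delta'-\delta\ge c_1(n)$, a new integer point appears in $Q\setminus P$; hence $cx\le\delta'$ valid for $Q$ with $\delta'$ integer implies $\lfloor\delta'\rfloor-\lfloor\delta\rfloor\le c_1(n)$.

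Finally, with $\delta'$ bounded in terms of $n$ by a constant independent of $Q$, I would apply Lemma~\ref{lem:upper} with $a=c$: the inequality $cx\le\delta$ is valid for $P=Q_I$, the inequality $cx\le\delta'$ is valid for $Q$, and $\lfloor\delta'\rfloor-\lfloor\delta\rfloor\le c_1(n)$, so $cx\le\delta$ is valid for $Q^{(p)}$ with $p=c_1(n)f(n)$, a function of $n$ only, as claimed. The main obstacle I expect is the geometric/Minkowski step: quantifying precisely how the volume lower bound $\phi(n-1)$ on the symmetric body $K\subseteq F$ translates into the constant $c_1(n)$ controlling the transversal extent of any relaxation — this is where the lattice-point-forcing argument of~\cite{non-full-dim} has to be adapted from the ``affine hull convergence'' setting to this ``single facet'' setting, taking care that $K$ really does sit in the relative interior of the cross-section $H\cap Q$ so that a genuinely new integer point is produced rather than one already lying in $P$.
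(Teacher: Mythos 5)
Your plan coincides with the paper's proof: after the unimodular reduction (with the trivial cases $\dime(P)\le 1$ set aside), Theorem~\ref{thm:ellipsoid} applied to the facet $F$ inside $\aff(F)$ yields a $(d-1)$-dimensional centrally symmetric polytope $E\subseteq F$ of volume $\phi(d-1)$ whose only integer point is its center, a Minkowski-type volume argument caps how far any relaxation $Q\subseteq\aff(P)$ can violate $cx\le\delta$ by a constant depending on $n$ only, and Lemma~\ref{lem:upper} then finishes exactly as you say. The step you flag as the main obstacle is resolved in the paper in a few lines and needs neither the relative-interior positioning of $E$ in the cross-section nor the explicit production of a point of $Q\setminus P$: normalizing $cx\le\delta$ to be $x_d\le 0$ with the center of $E$ at the origin, a point $\bar x\in Q$ with $\bar x_d>d\cdot 2^{d-1}/\phi(d-1)$ gives $C=\conv(E,\bar x)\subseteq Q$ of volume $\bar x_d\,\vol(E)/d>2^{d-1}$ whose only integer point is the origin (its slice at $x_d=0$ is $E$ and $Q$ is a relaxation), so the symmetrization $C\cup(-C)$ is a centrally symmetric convex body of volume exceeding $2^d$ with the origin as its only integer point, contradicting Minkowski's Convex Body Theorem directly.
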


\begin{proof}
Let $P$ be $d$-dimensional and $F$ be the facet of $P$ induced by inequality $cx\le\delta$. If $d=0$, then there is nothing to prove, as $P$ has no facet. If $d=1$, then $Q^{(1)}=P$ since $Q$ is a relaxation of $P$ contained in $\aff(P)$. Hence we assume $d \geq 2$. Modulo a unimodular transformation, we can assume that $\aff(P)=\{x\in\R^n:x_{d+1}=\dots=x_n=0\}$ and $cx \leq \delta$ is the inequality $x_d \leq 0$. By Theorem \ref{thm:ellipsoid}, $F$ contains a $(d-1)$-dimensional centrally symmetric polytope $E$ of volume $\phi(d-1)$, whose only integer point is its center. We assume wlog that this point is the origin. We now argue that the inequality $x_d \leq \bar \delta = \max\{\frac{i\cdot 2^{i-1}}{\phi(i-1)}:2\le i\le n\}$ is valid for each relaxation $Q$ of $P$ contained in $\aff(P)$. Note that $\bar \delta$ only depends on $n$. Assume by contradiction that there exists a point $\bar x \in Q$ with $\bar x_d > \bar \delta \geq \frac{d\cdot 2^{d-1}}{\phi(d-1)}$. Define $C=\conv(E,\bar x)\subseteq Q$. Since $Q$ is a relaxation of $P$, $C$ is a $d$-dimensional convex body whose only integer point is the origin, which lies on its boundary. Moreover,
\[\vol(C)=\bar x_d \cdot \frac{\vol(E)}{d}> \frac{d\cdot 2^{d-1}}{\phi(d-1)} \cdot \frac{\phi(d-1)}{d} = 2^{d-1}.\]
Let $C'$ be the symmetrization of $C$ w.r.t.\ the origin, i.e. $C'=C \cup -C$. Note that $C'$ is a $d$-dimensional centrally symmetric polytope in the space of the first $d$ variables whose only integer point is the origin. Furthermore, $\vol(C')=2 \vol(C) > 2^d$. However, by Minkowski's Convex Body Theorem (see, e.g.,~\cite{Bar}), every centrally symmetric convex body in $\R^d$ whose only integer point is the origin has volume at most $2^{d}$. This is a contradiction. Therefore the inequality $x_d \leq \bar \delta$ is valid for each relaxation $Q$ of $P$ contained in $\aff(P)$. Lemma \ref{lem:upper} then implies that $cx\le\delta$ is valid for $Q^{(p)}$, where $p$ depends only on $n$.
\end{proof}

\noindent\emph{Proof of Theorem \ref{thr:bounds}.}
$(i)$. Let $P \in {\cal A}$. Then no facet of $P$ is relatively lattice-free. Suppose first that $P$ is full-dimensional. By Lemma \ref{lem:aho}, for each facet-defining inequality $cx \leq \delta$ of $P$, $cx \leq \delta$ is valid for $Q^{(p)}$, with $p$ depending on $n$ only. This implies that $Q^{(p)}=P$, concluding the proof. Now, assume that $P$ is of dimension $d<n$. By definition of ${\cal A}$, $P$ is not relatively lattice-free. Theorem \ref{th:main-non-full-dim} implies that there exists a number $p$ depending only on $n$ such that, for each relaxation $Q$ of $P$, $Q^{(p)}\subseteq\aff(P)$. Thus in a number of iterations of the CG closure depending on $n$ only we are back to the full-dimensional case. This proves $(i)$.

\smallskip

\noindent $(ii)$. Now fix $n,k \in \N$, $k\geq 1$, and consider the family of polyhedra $P\subseteq \R^n$, $P \in {\cal B}$, with $|\relint(P) \cap \Z^n| =k$. Actually, this family is only composed of polytopes, as every unbounded integral polyhedron with an integer point in its relative interior contains infinitely many of those. By Theorem \ref{th:main}, $r^*(P)$ is finite for each polytope from this family. Lagarias and Ziegler~\cite{Lazi} showed that, up to unimodular transformations, for each $d$ and $k\ge1$ there is only a finite number of $d$-dimensional polytopes with $k$ integer points in their relative interior. Hence there exists a number $t_{n,k}$ such that $r^*(P)\le t_{n,k}$ for all polytopes $P \subseteq \R^n$ with $P \in {\cal B}$ and $|\relint(P) \cap \Z^n| =k$, concluding the proof of $(ii)$.\hspace*{\stretch{1}}$\square$

\medskip

Theorem~\ref{thr:bounds} shows that full-dimensional lattice-free integral polyhedra with an integer point in the relative interior of each facet have finite reverse CG rank. For these polyhedra, the non-existence of a direction $v$ as in the statement of Theorem~\ref{th:main} is due to the fact that by applying any direction $v\notin\reccone(P)$, one of the integer points of the polyhedron (more precisely, one of those lying in the relative interior of the facets) will fall in the interior of $P+\linhull v$. However, this is not a necessary condition for an integral polytope to have finite reverse CG rank. As an example, consider the polytope $P=\conv\{(0,0,0),$ $(3,1,0), (2,3,0),(3,2,2)\}\subseteq\R^3$. If we take, e.g.,  $v=(1,0,0)$, then $P+\linhull v$ does not contain any integer point of $P$ in its interior, but $P+\linhull v$ is not lattice-free, as $(3,2,1)$ is in its interior.

\smallskip

As a counterpart to Theorem \ref{thr:bounds}, we now provide examples of families of polytopes from $\cal A$ (resp. $\cal B$) where $r^*$ grows with the dimension of the ambient space (resp. with the number of integer points in the relative interior of the polytopes). Indeed, let $P\subseteq \R^n$ be an integral $d$-dimensional polytope, with $d \leq n-1$. Up to a unimodular transformation, $P$ is contained in the hyperplane defined by the equation $x_n=0$. Bockmayr et al.~\cite{Bock} showed that, for each $k \in \N$, there exists a polyhedron $Q_k\subseteq [0,1]^k$ such that $r(Q_k)=k$ and $(Q_k)_I=\emptyset$. Let $\bar Q=\{(x,1): x \in Q_{n-1}\}\subseteq \R^n$, and set $Q=\conv(P,\bar Q)$. Note that $Q$ is a relaxation of $P$. Since $\bar Q^{(t)}\subseteq Q^{(t)}$, one has $r(Q)\geq r(\bar Q)=n-1$. Moreover, as $Q_I=P$, one has that for each polyhedron in $\R^n$ that is not full-dimensional, $r^*(P)\geq n-1$. Hence, any bound on $r^*(P)$ must grow with the dimension of the space $P$ lives in. (Incidentally, this also implies that it makes no sense to study the parameter $r^*(P)$ when $P$ is viewed as an integral $d$-polyhedron that can be embedded in {\em any} real space of dimension $n\geq d$, since this value is equal to $+\infty$ for each integral polytope $P$).

For $k \in \N$, consider the integral polytope $P_k\subseteq \R^2$ defined by the following system of inequalities:

$$\begin{array}{lcrcll}
x_1 &  & & \geq & 0 \\
& & x_2 & \geq & 0 \\
& & x_2 & \leq & k \\
x_1 & - & \frac{1}{k} x_2 & \leq & 1 \\
\end{array}$$

Note that $P_k$ has $k-1$ integer points in its interior. Let $Q_k=\conv(P_k,\bar x)$, where $\bar x= (1/2,-k/2)$. Clearly $Q_k$ is a relaxation of $P$. Using Lemma \ref{lem:lower}, we obtain $r(Q_k)\geq k/2$. This implies that any upper bound on $r^*(P)$ for $P \in {\cal B}$ must grow with the number of integer points in the interior of $P$. One immediately extends these results to unbounded polyhedra and higher dimensions.

\section{Algorithmic issues}\label{sec:algo}

Unfortunately, Theorem \ref{th:main} does not seem to immediately imply an algorithm for detecting if an integral polyhedron has finite reverse CG rank. In this section, we shed some light on this problem. We employ some standard definitions and notation from complexity theory, see e.g.~\cite{Ho}, and from polyhedral theory, see e.g. \cite{sch}. All reductions that we give between decision problems are Karp reductions, and the classes of $\np$-complete, $\np$-hard, etc. problems are those defined accordingly. All results that are assumed as known in this section are also standard and can be found in (at least one of) \cite{KaPf,sch}. Detecting if an input ${\cal H}$-polyhedron has infinite reverse CG rank can be stated as the following decision problem.

\begin{framed}
(\RCGR)
\begin{quote}%(\RCGR)

  Given: an integral polyhedron $P=\{x \in \R^n: Ax \leq b\}$, where $A \in \Z^{m \times n}$ and $b \in \Z^m$;

  Decide: if there exists $v \in \Z^n\setminus \reccone (P)$ such that $P + \linhull{v}$ is relatively lattice-free.
\end{quote}
\end{framed}

We call $\vRLF$ the generalization of $\RCGR$ where we do not ask for the input polyhedron to be integral. We show the following.

\begin{thm}\label{thm:algo-positive}
$\RCGR$ is decidable. Moreover, it can be decided in polynomial time if the dimension $n$ is fixed.
\end{thm}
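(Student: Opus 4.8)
The plan is to reduce the condition in $\RCGR$ to a finite search over candidate vectors $v$, and then to organize that search so that it can be carried out algorithmically. First I would observe that the predicate ``$P+\linhull v$ is relatively lattice-free'' depends on $v$ only through the direction of $v$, and moreover only through how $v$ sits relative to the rational structure of $\aff(P)$; using a unimodular transformation we may assume $\aff(P)=\{x : x_{d+1}=\dots=x_n=0\}$, where $d=\dime(P)$. If $d=n$, then $P+\linhull v$ is full-dimensional for every $v$, and it is relatively lattice-free iff it is lattice-free; if $d<n$, by Observation \ref{obs:lf} the answer to $\RCGR$ is affirmative iff $P$ itself is relatively lattice-free, which is a single feasibility question. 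So the substantive case is $d=n$, i.e. $P$ full-dimensional, and we must decide whether there is $v\in\Z^n\setminus\reccone(P)$ with $P+\linhull v$ lattice-free.

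Next I would bound the search space for $v$. The key point is that if $P+\linhull v$ is lattice-free, then it is contained in a maximal lattice-free convex set, which by \cite[Theorem 2]{BasuDirichelet} (used already in part (c) of the proof of Proposition \ref{prop:full-dim-nec}) is of the form $R+L$ with $R$ a polytope and $L$ a rational linear space of dimension $\ge 1$; moreover one may take $v$ to be a primitive integer vector generating a rational line inside $L$. The finitely many maximal lattice-free sets containing a given bounded region, and in particular the finitely many candidate rational directions $L$, can be enumerated from the data $(A,b)$: a maximal lattice-free polyhedron $R+L$ containing $P$ has facet normals of bounded encoding length (in terms of $n$ and the encoding length of $P$), so there is an explicitly computable finite list of candidate primitive vectors $v$ to test. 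For each candidate $v$ we must check two things: (1) $v\notin\reccone(P)$, which is a linear feasibility test on the system defining $\reccone(P)$; and (2) $P+\linhull v$ is lattice-free, i.e. $\interior(P+\linhull v)\cap\Z^n=\emptyset$. After the unimodular change of coordinates that sends $v$ to a unit vector $e^n$, statement (2) becomes: the projection $\proj(P)$ of $P$ onto the first $n-1$ coordinates has empty intersection with $\Z^{n-1}$ in its interior — equivalently, $\proj(P)$ is lattice-free in $\R^{n-1}$. Testing whether a rational polyhedron is lattice-free is decidable (it reduces to an integer feasibility question over a slightly shrunk polyhedron, or to integer programming), so the whole procedure terminates.

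For the polynomial-time claim with $n$ fixed, I would sharpen the above: when $n$ is a constant, the number of candidate directions $v$ is polynomially bounded in the input size (their encoding length is polynomial, and in fixed dimension there are polynomially many primitive vectors of bounded length), each membership test $v\notin\reccone(P)$ is a polynomial-size LP, and each lattice-freeness test for $\proj(P)\subseteq\R^{n-1}$ is an integer programming feasibility question in fixed dimension $n-1$, solvable in polynomial time by Lenstra's algorithm. Assembling these polynomially many polynomial-time subroutines yields a polynomial-time algorithm. The main obstacle I anticipate is the first half of the third paragraph: extracting from $(A,b)$ an explicit, provably complete finite list of candidate directions $v$, i.e. bounding the encoding length of the facet normals of a maximal lattice-free set that contains $P$ and whose lineality space captures $v$. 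This requires a quantitative version of the structure theorem for maximal lattice-free sets — showing that among all maximal lattice-free sets containing $P$ one can always choose one of controlled complexity — rather than the purely qualitative statement of \cite[Theorem 2]{BasuDirichelet}; everything after that (the LP tests, the IP-in-fixed-dimension tests, the unimodular reductions) is routine.
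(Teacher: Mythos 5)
There is a genuine gap, and it sits exactly where you flagged it: the reduction of the search over $v\in\Z^n$ to an ``explicitly computable finite list'' of candidate directions is never established, and the qualitative statement of \cite[Theorem 2]{BasuDirichelet} cannot supply it. As stated, your finiteness claim is even false: a fixed full-dimensional polytope can be contained in a continuum of maximal lattice-free convex sets (already in $\R^2$, a small ball around $(1/2,1/2)$ lies in infinitely many maximal lattice-free triangles), so at best one could hope to bound the possible \emph{lineality directions} $L$, and that requires a separate quantitative argument you do not give. Without such a bound your algorithm is only a semi-decision procedure: it can confirm ``yes''-instances by eventually finding a good $v$, but it never halts on ``no''-instances. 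The paper avoids bounding $v$ altogether. After the same preliminary reductions (Lemma \ref{lem:out-lin}, Remark \ref{rem:non-full-dim}, Lemma \ref{lem:np-complete}), it interleaves two procedures justified by Proposition \ref{prop:full-dim-nec}: INF enumerates all $v\in\Z^n$ by increasing norm and tests lattice-freeness of $P+\linhull v$ (halts when $r^*(P)=+\infty$), while FIN tests, for $k=1,2,\dots$, whether every relaxation of $P$ is contained in $P_k$, via a finite covering test $\bigcup_{i}R^F_i\supseteq F$ over the facets $F$ of $P_k$ (halts when $r^*(P)<+\infty$). Your proposal has no analogue of FIN, i.e.\ no mechanism to certify finiteness of $r^*$, which is the heart of the decidability proof.

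The fixed-dimension polynomial-time claim has a second problem. Even granting a polynomial bound on the encoding length of candidate $v$, the number of integer vectors of polynomially bounded encoding length is exponential in the input size (already for fixed $n$), so ``polynomially many candidates'' does not follow, and enumerating them is not a polynomial-time routine. The paper takes a completely different route here: by Lemma \ref{lem:averkov} (built on Nill--Ziegler) a full-dimensional lattice-free integral polytope with finite reverse CG rank has volume at most $c^{4^n}$, hence by Lagarias--Ziegler it is unimodularly equivalent to one of a finite list of representatives depending on $n$ only; the algorithm precomputes this list (using the decidability part, in constant time for fixed $n$) and then tests unimodular equivalence of $P$ with each representative by solving an integer feasibility problem in fixed dimension and checking $|\det(A^*)|=1$. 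To salvage your approach you would need both a proven, computable bound on the candidate directions (your ``quantitative structure theorem'') and, for the fixed-dimension claim, a polynomial bound on their \emph{number}, neither of which is available in your write-up.
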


\begin{thm}\label{thm:algo-negative}
$\vRLF$ is $\conp$-hard.
\end{thm}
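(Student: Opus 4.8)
The plan is to reduce from a known \textsf{coNP}-hard problem, namely the complement of \textsc{Integer Feasibility} for a suitably restricted class of polyhedra, or more conveniently from the complement of the problem ``does a given rational polytope $S$ contain a point of $\Z^n$ in its relative interior?'' Concretely, I would build on the fact (which follows from the \textsf{NP}-completeness of \textsc{Integer Programming} feasibility, and from standard perturbation arguments that force feasible points into the interior) that it is \textsf{NP}-complete to decide whether a rational polytope $S \subseteq \R^n$ has $\relint(S) \cap \Z^n \neq \emptyset$; equivalently, deciding whether $S$ is relatively lattice-free is \textsf{coNP}-complete. The task is then to encode ``$S$ is relatively lattice-free'' as an instance of \textsf{vRLF}, i.e.\ as a question of the form ``does there exist $v \in \Z^n \setminus \reccone(P)$ with $P + \linhull v$ relatively lattice-free?''

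The key construction is as follows. Given the polytope $S \subseteq \R^n$, embed it in $\R^{n+1}$ as $S \times \{0\}$, and consider whether the ``witness direction'' $v = e^{n+1}$ can be forced to be essentially the only candidate. By Observation \ref{obs:lf}, if we let $P = S \times \{0\} \subseteq \R^{n+1}$ (which lies in the rational hyperplane $x_{n+1}=0$ and meets $\Z^{n+1}$ provided $S$ is chosen to contain a lattice point of $\Z^n$, which we can assume or arrange), then there exists $v \in \Z^{n+1} \setminus \reccone(P)$ with $P + \linhull v$ relatively lattice-free \emph{if and only if} $P$ itself is relatively lattice-free, which holds iff $S$ is relatively lattice-free. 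This gives a polynomial-time many-one reduction from ``$S$ relatively lattice-free'' to \textsf{vRLF}. Since the former is \textsf{coNP}-hard, so is \textsf{vRLF}. The reduction is clearly polynomial: we just append a coordinate and keep the same inequalities plus $x_{n+1} \le 0$, $x_{n+1} \ge 0$.

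The main obstacle I anticipate is \emph{establishing the \textsf{coNP}-hardness of the base problem} ``$S$ is relatively lattice-free'' in a form clean enough to plug in — in particular handling the hypotheses of Observation \ref{obs:lf} ($S$ contained in a rational hyperplane is automatic after embedding, but $\aff(P)\cap\Z^{n+1}\neq\emptyset$ requires $\aff(S)\cap\Z^n\neq\emptyset$, which one must arrange, e.g.\ by translating so the origin lies in $\aff(S)$, or by noting that the \textsf{NP}-hard feasibility instances can be taken with $\aff(S)$ rational and lattice-containing). One must also be careful that the input to \textsf{vRLF} need not be integral — which is exactly why we use \textsf{vRLF} rather than \textsf{RCGR} — so $P = S\times\{0\}$ being a non-integral polytope causes no problem. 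A secondary subtlety is ensuring the ``relative interior'' versus ``interior'' distinction is handled correctly when $S$ is full-dimensional in $\R^n$: in that case $P$ is non-full-dimensional in $\R^{n+1}$, relatively lattice-free for $P$ means lattice-free for $S$, and Observation \ref{obs:lf} applies verbatim. Once these hypotheses are checked, the equivalence and the polynomiality of the reduction are routine, and the theorem follows.
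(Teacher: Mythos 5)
Your proposal is correct and takes essentially the same route as the paper: the paper first proves (Lemma \ref{lem:np-complete}) that deciding whether a polyhedron has an integer point in its relative interior is {\np}-complete, via exactly the $1/2$-perturbation of an integer-feasibility instance you sketch, and then concludes that $P$ is a yes-instance of that problem if and only if $P\times\{0\}\subseteq\R^{n+1}$ is a no-instance of {\vRLF} --- the same cylinder embedding with witness direction $e^{n+1}$ that you use. The only difference is cosmetic: your worry about the hypothesis $\aff(S)\cap\Z^n\neq\emptyset$ in Observation \ref{obs:lf} is harmless, since for the explicit embedding $S\times\{0\}$ the equivalence holds by the direct argument (forward direction via $v=e^{n+1}$, backward via $\relint(P)\subseteq\relint(P+\linhull v)$) without that hypothesis.
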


Let us discuss some consequences of Theorems \ref{thm:algo-positive} and \ref{thm:algo-negative}. First, recall that an ${\cal H}$-polyhedron can be transformed into a ${\cal V}$-polyhedron in polynomial time in fixed dimension, and vice-versa. So, Theorem \ref{thm:algo-positive} also holds if the input is a ${\cal V}$-polyhedron. Also, recall that a (widely believed) conjecture states that no $\conp$-hard problem lies in $\np$. This however does not completely settle the complexity of $\RCGR$, as $\vRLF$ is a more general problem than $\RCGR$. It is not clear however how knowing that the input polyhedron is integral could help: recall that, for instance, it is unlikely that there exists a compact certificate for the integrality of a polyhedron.

In the rest of the section, we prove Theorems \ref{thm:algo-positive} and \ref{thm:algo-negative}.

\subsection{Proof of Theorem \ref{thm:algo-negative}}\label{sec:algo-aux}

Consider the following problem.

\begin{framed}
(\IPRI)
\begin{quote}%(\IPRI)

  Given: a polyhedron $P=\{x \in \R^n: Ax \leq b\}$, where $A \in \Z^{m \times n}$ and $b \in \Z^m$;

  Decide: if $P$ has an integer point in its relative interior.
\end{quote}
\end{framed}

Let $\IF$ be the problem of deciding if a polyhedron (given as a finite list of rational inequalities) contains an integer feasible point. Recall that $\IF$ is $\np$-complete, and can be solved in polynomial time in fixed dimension. 

\begin{lem}\label{lem:np-complete}
$\IPRI$ is $\np$-complete, and can be solved in polynomial time in fixed dimension.
\end{lem}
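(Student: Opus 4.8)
The plan is to establish membership in $\np$ and $\np$-hardness separately, and then to obtain the fixed-dimension claim essentially for free from the corresponding property of $\IF$. For membership in $\np$, the natural idea is: given a polyhedron $P=\{x:Ax\le b\}$, a certificate is an integer point $\bar x$ together with a subset $S$ of the constraints; one checks in polynomial time that $\bar x$ satisfies $A_ix=b_i$ for $i\in S$ and $A_ix<b_i$ for $i\notin S$, and that $\{x:A_ix=b_i,\ i\in S\}=\aff(P)$. The only subtlety is verifying the last equality: $\bar x$ lies in $\relint(P)$ iff it satisfies strictly every constraint that is not an implicit equality of the system $Ax\le b$, so the certificate should really name the set of implicit equalities (or, equivalently, one can guess $\bar x$ and verify, using linear programming over the rationals in polynomial time, that for each constraint $i$ with $A_i\bar x=b_i$ the equality $A_ix=b_i$ holds throughout $P$). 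Since linear programming is in P, this whole check runs in polynomial time, so $\IPRI\in\np$.

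For $\np$-hardness I would reduce from $\IF$. The obstacle is that $\IF$ asks for an integer point \emph{anywhere} in $P$, whereas $\IPRI$ asks for one in the \emph{relative interior}; a feasible integer point may well sit on the boundary, or $P$ may be lower-dimensional with all its integer points on a facet. The fix is to "fatten" $P$ by one extra dimension: given $P=\{x\in\R^n:Ax\le b\}$, define
\[
P'=\{(x,y)\in\R^{n+1}: Ax\le b+y\cdot\mathbf 1,\ 0\le y\le 1\},
\]
or a similarly perturbed full-dimensional polyhedron. If $\bar x\in P\cap\Z^n$, then $(\bar x,1)$ satisfies $A\bar x\le b+\mathbf 1$ with slack, and lies strictly between $y=0$ and $y=1$ only if we instead place it at, say, $y=1$ on the boundary — so one should take $0\le y\le 2$ and check that $(\bar x,1)$ is interior to $P'$: indeed $A\bar x\le b<b+\mathbf 1$ and $0<1<2$, so $(\bar x,1)\in\interior(P')=\relint(P')$. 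Conversely, if $(\bar x,\bar y)\in\relint(P')\cap\Z^{n+1}$, then $\bar y\in\{1\}$ (the only integer strictly between $0$ and $2$), and $A\bar x\le b+\mathbf 1$; this does not immediately give $A\bar x\le b$, so one must be slightly more careful. A clean way around this is to scale: replace the target slack by a large integer and shrink, i.e.\ use $P'=\{(x,y): 2Ax\le 2b+\mathbf 1 + \text{(something)}\}$ — but the cleanest is to note $Ax\le b$ has \emph{integer} data, so for an integer point $A\bar x\le b+\mathbf 1$ together with $A\bar x\equiv$ integer does not force $A\bar x\le b$. Hence I would instead use the perturbation $P'=\{(x,y):Ax\le b+\varepsilon y\mathbf 1,\ 0\le y\le 1\}$ with a fixed small rational $\varepsilon<1$ chosen so that $\{x:Ax\le b+\varepsilon\mathbf 1\}\cap\Z^n=P\cap\Z^n$ (possible since the data is integral, e.g.\ $\varepsilon=1/2$ does \emph{not} work in general, but $\varepsilon$ small enough relative to the description does, and can be computed in polynomial time as a polynomially-bounded rational). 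Then $P\cap\Z^n\ne\emptyset$ iff $P'$ has an integer point in its relative interior. This is a polynomial-time Karp reduction, giving $\np$-hardness.

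Finally, the fixed-dimension statement: when $n$ is fixed, $n+1$ is fixed as well, and the reduction above stays within a fixed-dimensional ambient space; since $\IF$ is solvable in polynomial time in fixed dimension (Lenstra's algorithm), and membership in $\np$ was witnessed by a polynomial-time verifiable certificate whose guessed components ($\bar x$ and the set $S$) range over a polynomially-describable domain, one can enumerate or directly solve: more simply, $\IPRI$ in fixed dimension reduces to deciding, for each of the finitely many (at most $2^m$, but in fixed dimension one argues via the finitely many faces — actually the number of facets $m$ is part of the input, so one instead uses that $\relint(P)\cap\Z^n\ne\emptyset$ iff the \emph{system with the implicit equalities turned into equalities and all other inequalities made strict} has an integer solution, and such a "strict-feasibility" integer program in fixed dimension is again solvable in polynomial time by a routine adaptation of Lenstra's method, e.g.\ by replacing each strict inequality $A_ix<b_i$ over integer data by $A_ix\le b_i-1$ after appropriate scaling). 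I expect the main obstacle to be exactly this last point — phrasing the relative-interior condition so that it becomes an honest (lattice-)point feasibility question with integer data, so that both the $\np$ upper bound and the fixed-dimension algorithm follow from the known facts about $\IF$ without hidden analytic issues.
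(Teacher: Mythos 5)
Your fixed-dimension argument (identify the implicit equalities by LP, then replace each remaining strict condition $A_ix<b_i$ by $A_ix\le b_i-1$, which is legitimate since the data and the sought point are integral, and invoke Lenstra) is sound and is essentially a cleaner variant of the paper's procedure, which instead shrinks facet-defining inequalities by a small $\varepsilon$ and calls $\IF$; this transformation also yields the $\np$ upper bound. The problem is the $\np$-hardness reduction, which is the heart of the lemma, and your final construction is wrong. In $P'=\{(x,y):Ax\le b+\varepsilon y\mathbf 1,\ 0\le y\le 1\}$ every integer point has $y\in\{0,1\}$; but whenever $P$ contains a point $\bar x$, the whole segment $\{(\bar x,y):0\le y\le 1\}$ lies in $P'$, so $y=0$ and $y=1$ define proper faces of $P'$ and hence no integer point of $P'$ ever lies in $\relint(P')$. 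Thus the forward direction fails (e.g.\ $P=\{x\in\R:0\le x\le 1\}$ gives a yes-instance of $\IF$ whose image is a no-instance of $\IPRI$). Moreover, the reasoning that made you abandon the earlier ``$0\le y\le 2$, place the point at $y=1$'' version was itself mistaken: a point of the relative interior satisfies every non-implicit-equality constraint \emph{strictly}, so an integer $(\bar x,1)\in\relint(P')$ gives $A\bar x< b+\mathbf 1$, and integrality of $A,b,\bar x$ then does force $A\bar x\le b$. Even that version, however, needs an extra argument for the degenerate case in which $P=\emptyset$ but some constraints become implicit equalities of $P'$: for instance $n=1$ with constraints $x\le 0$ and $-x\le -4$ gives $P'=\{(2,2)\}$, whose relative interior is the integer point $(2,2)$ although $P\cap\Z=\emptyset$, so the converse direction breaks there.

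The paper avoids all of this with a much simpler reduction needing no auxiliary variable: replace $b$ by $b+\tfrac12\mathbf 1$ (equivalently, use $2Ax\le 2b+\mathbf 1$ to keep the data integral). Since $A$ and $b$ are integral, the relaxed polyhedron $\overline P$ has exactly the same integer points as $P$, and any such point satisfies all inequalities of $\overline P$ strictly, hence lies in $\interior(\overline P)\subseteq\relint(\overline P)$; so $P$ is a yes-instance of $\IF$ if and only if $\overline P$ is a yes-instance of $\IPRI$. You should replace your gadget by this (or repair the $0\le y\le 2$ gadget and treat the degenerate lower-dimensional case separately); as it stands, the hardness half of the lemma is not proved.
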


\begin{proof} One immediately reduces $\IF$ to $\IPRI$. Let in fact $P=\{x \in \R^n : Ax \leq b\}$, with $A \in \Z^{m \times n}$ and $b \in \Z^m$, and $\overline b \in \R^m$ be obtained from $b$ by adding $1/2$ to all its components. Note that $\overline P=\{x \in \R^n: Ax \leq \overline b\}$ contains exactly the same integer points as $P$ and no integer points on its boundary. On the other hand, $\IPRI$ can be decided in polynomial time in fixed dimension as follows: from the input system, detect a minimum defining system ${\cal C}$ for $P$; for each inequality $ax \leq \beta$ from ${\cal C}$ defining a facet $F$ of $P$, check if $F$ contains an integer point. If yes, then replace $ax \leq \beta$ with $ax \leq \beta - \varepsilon$, with $\varepsilon>0$ small enough. Call $\overline P$ the new polyhedron. Then $P$ is a yes-instance for $\IPRI$ if and only if $\overline P$ is a yes-instance for $\IF$. 
\end{proof}

Given Lemma \ref{lem:np-complete}, the proof of Theorem \ref{thm:algo-negative} is now immediate, as a polyhedron $P\subseteq \R^n$ is a yes-instance to $\IPRI$ if and only if the polyhedron $P \times \{0\} \subseteq \R^{n+1}$ is a no-instance to $\vRLF$.

\subsection{Proof of Theorem \ref{thm:algo-positive}}

In order to devise a fine procedure for $\RCGR$, we delve into the geometric characterization from Theorem \ref{th:main}. Our first observation is that we can reduce to the bounded case when $P$ is relatively lattice-free.

\begin{lem}\label{lem:out-lin}
Let $P\subseteq \R^n$ be an integral relatively lattice-free polyhedron with lineality space $S$.
\begin{itemize}
\item[(a)] If $\reccone(P) \neq S$, then $r^*(P)=+ \infty$.
\item[(b)] Otherwise, let $P=P'+S$, where $P'$ is an integral relatively lattice-free polytope such that $\dime(P')+\dime(S)=\dime(P)$. Then $r^*(P)=+\infty$ if and only if $r^*(P')=+\infty$, where the latter is computed in the affine hull of $P'$.
\end{itemize}
\end{lem}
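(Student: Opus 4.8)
The plan is to handle the two parts separately, using the geometric characterization of Theorem~\ref{th:main} as the main engine and the non-full-dimensional reduction (Theorem~\ref{th:main-non-full-dim}) to pass between $P$ and $P'$.

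For part~(a), I would argue directly from Theorem~\ref{th:main}. Since $P$ is relatively lattice-free, it is contained in a rational hyperplane (or an affine subspace of smaller dimension); fix a vector $w \in \reccone(P) \sm S$, which exists because $\reccone(P) \neq S$. The key observation is that $w$ can be chosen integral (the recession cone of a rational polyhedron is rational, and its lineality space is the largest linear subspace it contains, so there is an extreme ray direction not in $S$, and it may be taken integer). I would then check that $w$ is a witness for the condition of Theorem~\ref{th:main}: we need $w \notin \reccone(P)$ — wait, here $w \in \reccone(P)$, so this particular $w$ is not directly the witness. Instead, the right move is: take any $v \in \Z^n$ that together with $\aff(P)$ spans a hyperplane, analogously to Observation~\ref{obs:lf}. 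Concretely, up to a unimodular transformation assume $\aff(P) \subseteq \R^{n-1}\times\{0\}$; then since $P$ is relatively lattice-free, $e^n \notin \reccone(P)$ and $P + \linhull{e^n}$ has empty relative interior intersected with $\Z^n$ (because $\relint(P+\linhull{e^n}) = \relint(P) + \linhull{e^n}$ by Observation~\ref{obs:harder-than-you-think}, and this set meets $\Z^n$ iff $\relint(P)$ does, as the last coordinate must be $0$). Hence Theorem~\ref{th:main} gives $r^*(P) = +\infty$ unconditionally for \emph{any} relatively lattice-free $P$ — which already subsumes (a) \emph{and} the ``if'' direction of (b). So in fact the only content is the ``only if'' direction of (b): if $r^*(P') = +\infty$ then... but that's automatic since $P'$ is relatively lattice-free too. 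Let me reconsider: the nontrivial direction of (b) must be that $r^*(P) = +\infty \Rightarrow r^*(P') = +\infty$, and since \emph{both} are infinite by the above, (b) is really asserting a \emph{quantitative} match is not needed — just the equivalence, which holds trivially because both sides are always true under the hypotheses. I suspect the intended statement has $P$ \emph{not necessarily} relatively lattice-free, or the ``relatively lattice-free'' hypothesis is only on the ambient situation; I would re-read and, if the hypothesis is as stated, note that the ``only if'' of (a) and the full equivalence in (b) follow from the single fact that every relatively lattice-free integral polyhedron has infinite reverse CG rank, which is exactly what Observation~\ref{obs:lf} plus Theorem~\ref{th:main} deliver.

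Assuming instead (as the section's framing suggests) that the genuine goal is a \emph{reduction} used to make $\RCGR$ decidable — i.e. reducing the unbounded relatively-lattice-free case to a bounded one — the real work is showing the decomposition $P = P' + S$ with $P'$ an integral relatively lattice-free polytope of the stated dimension actually exists and behaves well. For this I would: write $P = \conv(V) + \reccone(P)$; since $\reccone(P) = S$ in case (b), $P = \conv(V) + S$. Project along $S$: let $\pi$ be the quotient map $\R^n \to \R^n/S \cong \R^{n - \dim S}$, set $P' = \pi(P)$ lifted back into a complement of $S$ spanned by integer vectors (possible because $S$ is a rational subspace). Then $P'$ is a polytope, $\dim(P') + \dim(S) = \dim(P)$, and $P'$ is integral because $P$ is and lattice points push forward to lattice points of the sublattice $\Z^n / (\Z^n \cap S)$, identified with a lattice in the complement. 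Relative lattice-freeness of $P'$ follows from that of $P$: an integer point in $\relint(P')$ would lift (using that $\Z^n \cap S$ is a full lattice in $S$, as $S$ is rational) to an integer point in $\relint(P') + S = \relint(P)$, a contradiction. Finally $r^*(P) = +\infty \iff r^*(P') = +\infty$ again because both are always infinite — so here the content of (b) is precisely the \emph{existence} of the good decomposition, which is what later algorithmic arguments consume.

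The main obstacle I anticipate is \textbf{not} any deep estimate but rather disentangling exactly what (b) is claiming once one notices that relative lattice-freeness alone forces $r^* = +\infty$; the proof will be short, and its delicate point is the integrality/rationality bookkeeping: ensuring the complement to the lineality space $S$ can be chosen spanned by integer vectors, that $\Z^n \cap S$ is a lattice of full rank in $S$ (this uses rationality of $P$, hence of $S$), and that under these identifications $P'$ is genuinely an \emph{integral} polytope so that Theorem~\ref{th:main} and Theorem~\ref{th:main-non-full-dim} apply to it verbatim in $\aff(P')$. I would carry out these verifications via a single unimodular transformation putting $S = \{x : x_{d'+1} = \dots = x_n = 0 \text{ and } x_1 = \dots = x_{?} = 0\}$ into coordinate form, after which everything reduces to Observation~\ref{obs:lf} and Observation~\ref{obs:harder-than-you-think} applied in the lower-dimensional coordinate subspace.
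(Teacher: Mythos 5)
Your central claim --- that relative lattice-freeness alone forces $r^*(P)=+\infty$, so that (a) and both directions of (b) become vacuous --- is false, and it is where the proposal breaks. You justify it by saying that a relatively lattice-free $P$ ``is contained in a rational hyperplane'' and then invoking Observation~\ref{obs:lf}; but a relatively lattice-free polyhedron can perfectly well be full-dimensional (then ``relatively lattice-free'' just means lattice-free), and Observation~\ref{obs:lf} applies only to sets contained in a rational hyperplane. The paper's own example $\conv\{(0,0),(2,0),(0,2)\}\subseteq\R^2$ is lattice-free, has $S=\reccone(P)=\{0\}$ (so it falls under case (b) with $P'=P$), and has \emph{finite} reverse CG rank. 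So the lemma is not ``trivially true because both sides are always true'': in case (b) either side may be finite, and the hypothesis $\reccone(P)\neq S$ in (a) is doing real work. Consequently neither (a) nor the equivalence in (b) is actually proved in your write-up; the only correctly argued piece is the construction of the decomposition $P=P'+S$ (projection along the rational space $S$, integrality and relative lattice-freeness of $P'$), which matches the paper's coordinate set-up but is then consumed by the erroneous ``both infinite'' conclusion.

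The repairs are short but they are the whole content. For (a): you correctly noticed that an integral $v\in\reccone(P)\setminus S$ is not itself a witness for Theorem~\ref{th:main}, but instead of abandoning it, observe that $P+\linhull v$ \emph{is} relatively lattice-free (an integer point in $\relint(P)+\linhull v$ can be pushed along the integral recession direction $v$ into an integer point of $\relint(P)$, contradicting the hypothesis on $P$), and that $-v\in\Z^n\setminus\reccone(P)$ with $P+\linhull{-v}=P+\linhull v$; now Theorem~\ref{th:main} applies. For (b): after the unimodular change of coordinates putting $S=\linhull{e_k,\dots,e_n}$ and $\aff(P')$ in the first $k-1$ coordinates, one must match witnesses, not rank values directly: any integral $v\notin\reccone(P)=S$ can be replaced by its integral component $v'$ in $\aff(P')$ without changing $P+\linhull v=P'+S+\linhull{v'}$, and $(x',x'')\in\relint(P+\linhull v)\cap\Z^n$ if and only if $(x',0)\in\relint(P'+\linhull{v'})\cap\Z^{k-1}\times\{0\}$; applying Theorem~\ref{th:main} separately to $P$ in $\R^n$ and to $P'$ in $\aff(P')$ then yields the stated equivalence. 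This witness-matching step is exactly what your proposal is missing.
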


\begin{proof}
Suppose there exists $v \in \reccone(P)\setminus S$. Then $P+\linhull v$ is relatively lattice-free, proving $(a)$.
Suppose now $S=\reccone(P)\neq \{0\}$, else $(b)$ is trivial. Up to a unimodular transformation, we can assume that $S$ is generated by a subset of vectors $e_k,\dots, e_n$ for some integer $k\leq n$, and that $\affhull (P')$ is generated by $e_1,\dots, e_{k-1}$. In order to investigate if $r^*(P)=+\infty$, we can restrict to investigate $P+\linhull v$ for integral vectors $v$ lying in the affine hull of $P'$. So let $v \in \affhull(P')$. Let $x' \in \Z^{k-1}$ and $x'' \in \Z^{n-k+1}$. As ${x' \choose x''} \in P + \linhull v$ if and only if ${x' \choose 0} \in P' + \linhull v$, (b) follows.
\end{proof}

\noindent Combining Theorem \ref{th:main} with Observation \ref{obs:lf}, one immediately deduces the following.

\begin{Rm}\label{rem:non-full-dim}
Let $P$ be an integral polyhedron that is not-full dimensional. Then $r^*(P)=+\infty$ if and only if $P$ is not relatively lattice-free.
\end{Rm}

For full-dimensional polyhedra, the situation is different. Clearly integral non-lattice-free polytopes have finite reverse CG rank. The following statement shows that lattice-free integral polytopes with finite reverse CG rank are somehow under control.

%The following Lemma exploits results from \cite{nizi}. The connection between our results and \cite{nizi} was pointed out to us by Gennadiy Averkov.

\begin{lem}\label{lem:averkov}
For each $n \in \N$, there are, up to unimodular transformations, only a finite number of $n$-dimensional lattice-free integral polytopes with $r^*<+\infty$. Each of these polytopes has volume at most $c^{4^n}$, for some constant $c>1$.
\end{lem}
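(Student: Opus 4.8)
The plan is to combine Theorem~\ref{th:main} with the structural result on maximal lattice-free convex sets used in part (c) of the proof of Proposition~\ref{prop:full-dim-nec}, together with Theorem~\ref{thm:ellipsoid}. Let $P\subseteq\R^n$ be an $n$-dimensional lattice-free integral polytope with $r^*(P)<+\infty$. By Theorem~\ref{th:main}, there is no $v\in\Z^n\sm\reccone(P)$ with $P+\linhull v$ relatively lattice-free; since $P$ is a full-dimensional polytope, $\reccone(P)=\{0\}$, so in fact $P+\linhull v$ is \emph{not} lattice-free for every $v\in\Z^n\sm\{0\}$ (equivalently, for every nonzero $v$). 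So I want to argue that a full-dimensional lattice-free integral polytope which stays lattice-free under no integral line shift must be ``small'', and then invoke finiteness up to unimodular equivalence.

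First I would establish a boundedness statement: there is a radius $\rho(n)$ such that $P$ is (up to a unimodular transformation, or simply after translating its unique ``lattice-free witness'' structure) contained in a ball of radius $\rho(n)$. The key point is that $P$ is lattice-free but $P+\linhull v$ is never lattice-free for integral $v$. By the classification of maximal lattice-free convex sets (\cite[Theorem 2]{BasuDirichelet}, as used in part (c) above), $P$ is contained in a maximal lattice-free set $M=W+L$ with $W$ a polytope and $L$ a rational linear subspace; if $\dim L\ge 1$ then picking an integral $v$ in $L$ gives $P+\linhull v\subseteq M$ lattice-free, a contradiction. Hence $L=\{0\}$ and $P$ is contained in a lattice-free \emph{polytope} $W$. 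Lattice-free polytopes in $\R^n$ have bounded volume (and can be taken of bounded diameter) by a theorem of Lagarias--Ziegler / Hensley / Averkov--Wagner--Weismantel — this is exactly the kind of ``finitely many up to unimodular equivalence'' input the paper already relies on for Theorem~\ref{thr:bounds}(ii). So $P\subseteq W$ with $\vol(W)\le c_1^{4^n}$ for an absolute constant $c_1$ (the doubly-exponential bound on the volume of a maximal lattice-free body in $\R^n$), giving $\vol(P)\le c^{4^n}$.

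Next I would upgrade the volume bound to a finiteness statement. The maximal lattice-free polytopes $W$ in $\R^n$ fall into finitely many unimodular equivalence classes; each such $W$ contains only finitely many lattice points, hence its integral sub-polytopes $P=\conv(P\cap\Z^n)\subseteq W$ are finite in number. Modding out by the finitely many choices of $W$ (and the unimodular transformations bringing each class to a representative), we conclude that there are, up to unimodular transformations, only finitely many $n$-dimensional lattice-free integral polytopes $P$ with $P+\linhull v$ not lattice-free for all nonzero integral $v$ — equivalently, by Theorem~\ref{th:main}, with $r^*(P)<+\infty$. The volume bound $c^{4^n}$ is inherited from the volume bound on $W$.

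The main obstacle will be pinning down the correct cited bound for the volume/diameter of lattice-free polytopes (and whether a \emph{lattice-free} as opposed to \emph{maximal lattice-free} polytope already has doubly-exponential volume — it does, via the same line of results) and making sure the constant $c$ and exponent $4^n$ match what those theorems actually give; I expect to cite Averkov--Wagner--Weismantel (or Lagarias--Ziegler for the finiteness up to unimodular equivalence, already used in the proof of Theorem~\ref{thr:bounds}) rather than re-derive anything. A minor subtlety is the reduction from ``$P+\linhull v$ never relatively lattice-free'' (the exact hypothesis from Theorem~\ref{th:main}) to ``never lattice-free'': since $P$ is full-dimensional and every nonzero $v$ has $v\notin\reccone(P)=\{0\}$, $P+\linhull v$ is full-dimensional, so relatively lattice-free and lattice-free coincide, and the step is immediate.
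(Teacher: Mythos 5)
Your opening reductions are fine (and match the paper): by Theorem~\ref{th:main} and full-dimensionality, $P+\linhull v$ is not lattice-free for any $v\in\Z^n\sm\{0\}$, and the argument via the classification of maximal lattice-free sets that the maximal set $M=W+L$ containing $P$ must have $L=\{0\}$ is correct. The gap comes right after, and it is fatal: the two claims you lean on --- that lattice-free polytopes in $\R^n$ have volume bounded by a function of $n$, and that maximal lattice-free polytopes fall into finitely many unimodular equivalence classes --- are both false. The Hensley/Lagarias--Ziegler volume bounds require at least one \emph{interior} lattice point ($k\ge 1$); for hollow polytopes the volume is unbounded (e.g.\ $[0,N]\times[0,1]^{n-1}$), and your parenthetical assertion that a lattice-free polytope ``already has doubly-exponential volume via the same line of results'' is exactly where the argument breaks. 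Worse, even \emph{maximal} lattice-free bounded sets have unbounded volume already in the plane: take the triangle with vertices $(-\tfrac12,0)$, $(k+\tfrac12,0)$ and apex at the intersection of the line through $(-\tfrac12,0),(0,1)$ with the line through $(k+\tfrac12,0),(1,1)$. It is lattice-free, has the lattice points $(0,0),\dots,(k,0)$, $(0,1)$, $(1,1)$ in the relative interiors of its three edges (hence is maximal), and its area grows like $k/2$; these triangles are pairwise unimodularly inequivalent. So knowing $P\subseteq W$ with $W$ a bounded maximal lattice-free polytope bounds neither $\vol(P)$ nor the number of equivalence classes, and your finiteness step collapses.

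The actual content of the lemma cannot be recovered from the Basu et al.\ classification plus interior-point volume bounds; it is precisely the theorem of Nill and Ziegler \cite[Theorem 2.1]{nizi}, which the paper invokes: up to unimodular transformations there are only finitely many $n$-dimensional hollow lattice polytopes that do \emph{not} admit a lattice projection onto an $(n-1)$-dimensional hollow lattice polytope, and these have volume at most $c^{4^n}$. The translation needed is that $P$ admits such a hollow lattice projection along a primitive $v\in\Z^n\sm\{0\}$ if and only if the cylinder $P+\linhull v$ is lattice-free; combined with Theorem~\ref{th:main} this gives the lemma immediately. In short: your reduction to ``no lattice-free cylinder'' is the right starting point, but from there you must cite the Nill--Ziegler exceptional-finiteness result (a genuinely deep input) rather than a volume bound for (maximal) lattice-free bodies, which does not exist.
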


\begin{proof}
\cite[Theorem 2.1]{nizi} implies the following: for each $n \in \N$, up to unimodular transformations, there exists only a finite number of $n$-dimensional lattice-free integral polytopes of $\R^n$ such that $P+\linhull v$ is not lattice-free for every $v \in \Z^n\setminus\{0\}$. The volume of each of those exceptions is at most $c^{4^n}$, for some constant $c>1$. By applying Theorem \ref{th:main}, the claimed result immediately follows. \end{proof}

We now prove Theorem \ref{thm:algo-positive}. Let $P$ be an input of $\RCGR$. Note that we can check if we are in case $(a)$ or $(b)$ of Lemma \ref{lem:out-lin} and, in case $(b)$, obtain $P'$ in time polynomial in $n$. So we assume wlog that $P$ is a polytope. Also, we can check if $P$ is relatively lattice-free in polynomial time in fixed dimension using Lemma \ref{lem:np-complete}. If it is not, then we output ``no'' because of Observation \ref{rem:non-full-dim} and the subsequent discussion. If conversely it is relatively lattice-free and not full-dimensional, then we output ``yes'' (see again Observation \ref{rem:non-full-dim}). Hence we can also assume $P$ full-dimensional and lattice-free.

We first show that $\RCGR$ is decidable. By Proposition \ref{prop:full-dim-nec}, we just need to give the following two finite procedures, which are then executed in turns (i.e. we alternate one step of the first and one step of the other) until one of the two halts.
\begin{itemize}
\item[INF]
If $r^*(P) = + \infty$,
this procedure finds a vector $v \in \Z^n$ such that $P + \linhull v$  is lattice-free.
\item[FIN]
If $r^*(P)$ is finite,
this procedure finds $k \in \N$ such that all relaxations of $P$ are contained in $P_k$.
\end{itemize}

The procedure INF enumerates all the possible vectors $v \in \Z^n$ by increasing norm.
For each candidate $v$, INF constructs an integer matrix $C$ and an integer vector $d$ such that $P + \linhull v = \{x \in \R^n : Cx\le d\}$, and checks if $P + \linhull v$ is lattice-free using Lemma \ref{lem:np-complete}.

\smallskip

The procedure FIN checks if all relaxations of $P$ are contained in $P_k$, for a fixed $k \in \N$. If so it stops, and if not, it checks $P_{k+1}$, and so on.
We now explain how FIN checks if all relaxations of $P$ are contained in $P_k$ in finite time.

Let $F$ be a facet of $P_k$ defined by inequality $cx \le \delta$, and let $H^F = \{x : cx = \delta\}$.
For every integer point $x^i \in P_k \setminus P$ (they are a finite number), there exists a polytope $R^F_i \subseteq H^F$ such that, for every point $r \in H^F$, we have $x^i \in \conv(r,P)$ if and only if $r \in R^F_i$.
%Note that, if $R^F_i \neq \emptyset$, then it has dimension $n-1$.
The polytope $R^F_i$ can be obtained by intersecting the hyperplane $H^F$ with the translated cone $C^F_i= x^i - \cone (W)$, where $W$ is the set of vectors $w$ such that $x^i + w$ is a vertex of $P$. It is clearly rational.

It can be checked that all relaxations of $P$ are contained in $P_k$ if and only if, for every facet $F$ of $P_k$, we have $\cup_{i \in I} R^F_i \supseteq F$, where $I = \{i : x^i \in \Z^n \cap P_k \setminus P\}$.
%For every facet $F$, by working in the affine space $H^F$, the problem reduces to the following: Given polytopes $F, R_i$, is it true that $\cup_i R_i \supseteq F$?
Hence to conclude the procedure FIN, it is sufficient to show a finite algorithm solving the following problem: given rational polyhedra $F,\{R_i\}_{i \in I}\subseteq \R^n$, with $|I| \in \N$, decide if $\cup_{i \in I} R_i \supseteq F$. It can be tested in finite time by induction on $|I| + \dim (F)$. We can assume that $F$ is full-dimensional, as otherwise we can work in the affine space $\aff (F)$ by intersecting all polyhedra $R_i$ with $\aff (F)$.
As polyhedra are closed sets, we can also assume that all polyhedra $R_i$ are full-dimensional, because we can always ignore those with dimension strictly smaller than the dimension of $F$.
The base cases are when either $\dim (F) =0$, or when $|I|=1$.
The first case is trivially solvable, while the second can be solved by linear programming.
For the inductive step, let $\bar \imath \in I$.
Clearly $\cup_{i \in I} R_i \supseteq F$ if and only if for every facet $ax \le \beta$ of $R_{\bar \imath}$, we have
\begin{equation} \label{whatever}
\cup_{i \in I}  R_i \supseteq \overline F,
\end{equation}
where $\overline F = \{x \in F : ax \ge \beta\}$.
If the polyhedron $\overline F$ is not full-dimensional, then the problem can be solved by induction.
Otherwise, if $\overline F$ is full-dimensional, \eqref{whatever} happens if and only if $\cup_{i \in I \setminus \{\bar \imath\}}  R_i \supseteq \bar F$, which can also be solved by induction.

\medskip

\noindent We now prove that $\RCGR$ can be solved in polynomial time in fixed dimension. Let ${\cal C}$ be the class of equivalence (under unimodular transformations) of lattice-free integral full-dimensional polytopes of $\R^n$ with finite reverse CG rank. From Lemma \ref{lem:averkov} we know that ${\cal C}$ is finite and that all polytopes belonging to some class from ${\cal C}$ have volume at most $c^{4^n}$. It is proved in \cite[Theorem 2]{Lazi} that an integer $n$-dimensional polytope of volume at most $V$ can be mapped via a unimodular transformation to an integral polytope contained in the cube of side at most $V \cdot n \cdot n! $. Hence, each class from ${\cal C}$ has a representative that is contained in the cube $K_n\subseteq \R^n$ of side $c^{4^n} \cdot n \cdot n!$. Because of the first part of the proof, we can construct this family ${\cal R}$ of representatives (with repetitions allowed) in fixed time for fixed dimension. To conclude, we only need to check if $P$ can be mapped via a unimodular transformation to some $\overline P \in {\cal R}$. This can be done in polynomial time as follows. Fix $\overline P \in {\cal R}$. Observe that, if $P$ and $\overline P$ are equivalent up to a unimodular transformation, they satisfy the following conditions: they have the same number of vertices, say $v_1,\dots, v_t$ for $P$ and $w_1,\dots, w_t$ for $\overline P$. Moreover, $\vol(P)=\vol(\overline P)$ must hold, so all affine transformations $A (\cdot) + u$ mapping $P$ to $\overline P$ are such that $|\det(A)|=1$. There exists a unimodular transformation mapping $P$ to $\overline P$ if and only if any solution $(A^*,u^*)$ to the following instance of $\IF$ with variables $(A,u)$:
$$\begin{array}{llll}
Av_j + u & = & w_j & \hbox{ for $j=1,\dots, t$} \\
A & \in & \Z^{n \times n}\\
u & \in & \Z^n,
\end{array}
$$

\noindent is such that $|\det(A^*)|=1$. The statement then follows from the fact that the determinant of $A^*$ and the vertices of $P$ can be computed, and $\IF$ solved, in polynomial time in fixed dimension.

%for the special case of polytopes. We are left to show it in the unbounded case, so let $P\subseteq \R^n$ be an $n$-dimensional unbounded lattice-free polyhedron with $r^*(P)<+\infty$. Note that the recession cone and the lineality space of $P$ coincide, else  $P+\linhull v$ is lattice-free for each $v \notin \reccone(P)$ such that $-v \in \reccone(P)$, contradicting Theorem \ref{th:main}. Then $P=P'+L$, where $P'$ is a relatively lattice-free integral polytope, $L$ is a linear subspace, and $\dime(P')+\dime(L)=n$. Up to a unimodular transformation, we can assume that $\affhull (P')=\{ x \in \R^n: x_{n'+1}=x_{n'+2}=\dots=x_{n}=0\}$ for some $n'<n$ and that $L$ is its orthogonal complement. As $P+\linhull v$ is not lattice-free for each $v \in \Z^n\setminus L$, then $P'+\linhull v$ is not lattice-free for every $v \in \aff(P') \setminus \{0\}$. From what proved for polytopes, up to a unimodular transformation $P'$ belongs to a finite family. The statement follows.

\section{Extensions}\label{sec:extens}

\subsection{On the definition of relaxation}

Recall that we defined a relaxation of an integral polyhedron to be a {\em rational} polyhedron. We remark that the rationality assumption is crucial in the statement of Theorem~\ref{th:main}. As an example, consider the polytope $P\subseteq \R^2$ consisting only of the origin. Any line $Q\subseteq \R^2$ passing through the origin and having irrational slope is an (irrational) polyhedron whose integer hull is $P$. One readily verifies that the CG closure of $Q$ is $Q$ itself, showing that in this case the CG closures of $Q$ do not converge to the integer hull $P$. However, no vector $v$ satisfying the conditions of Theorem~\ref{th:main} exists.

Assume now that $P$ is a polytope. As a referee pointed out, Theorem \ref{th:main} still holds if we define a relaxation of $P$ to be a convex body whose integer hull is $P$. This immediately follows from the fact that, for each convex body, one can construct a rational polytope that contains it and has the same integer hull (see e.g.~\cite[Proof of Corollary 23.4a]{sch}). The example above shows that a further extension of the definition of relaxation as to include all unbounded convex sets will make Theorem \ref{th:main} false. In particular, it is not clear if we can substantially extend the concept of relaxation for unbounded integral polyhedra $P$ and keep Theorem \ref{th:main} true.

\subsection{On relaxations with bounded facet complexity}

An interesting question is whether bounding the facet complexity of relaxations leads to stronger bounds on the CG rank of polyhedra (we refer the reader to~\cite{sch} for the concept of \emph{facet complexity}, as well as for the one of \emph{vertex complexity}). As the example from Figure \ref{fig:2-dim} shows, the CG rank of a polyhedron can be exponential in the facet complexity of the polyhedron itself. But what happens to the reverse CG rank of an integral polyhedron $P$, if we require all relaxations to be of facet complexity $\nu(c)$ for some fixed function $\nu$, where $c$ is the facet complexity of $P$? Recall the following two results (see respectively e.g.~\cite[Theorem 10.2]{sch} and~\cite[Theorem 16.1]{sch}): \emph{(i)} there exists a polynomial $\phi: \N \times \N \rightarrow \N$ such that, if $P\subseteq \R^n$ has facet complexity at most $c$, then $P$ has vertex complexity at most $\phi(n,c)$; \emph{(ii)} for each integral polyhedron $P$ and each relaxation $Q$ of $P$, $\reccone(P)=\reccone(Q)$. From \emph{(ii)}, it follows that each relaxation $Q$ of a (non-empty, integral) polyhedron $P\subseteq \R^n$ can be written as $\conv(V\cup S) + \reccone(P)$, where $V$ is composed of a point from each minimal non-empty face of $P$ and $S$ is a finite set of points. Then from \emph{(i)} we have that bounding the complexity of the inequalities defining $Q$ implies bounding the complexity of each point in $S$, and hence its $\ell_\infty$ norm. Using the notation from the proof of Theorem \ref{th:main}, we deduce that $V \cup S \subseteq P_k$, for some $k$ big enough that does not depend on $Q$ but only on $P$ and $\nu$, and consequently that $Q\subseteq P_k$, since $\reccone(Q)=\reccone(P)=\reccone(P_k)$. Repeating the arguments from the proof of the implication $(1) \Rightarrow (2)$ in Proposition \ref{prop:full-dim-nec}, we deduce that the CG rank of $Q$ is bounded by a function depending only on $P$, $\nu(c)$, and $k$. Hence, the reverse CG rank in this case would be always bounded.

Let us remark that this is in sharp contrast with what happens in the mixed-integer case, where a relaxation of small complexity may have unbounded CG (or even split) rank: see~\cite[Example 2]{CKS}.

\subsection{Reverse split rank} Another interesting problem is the extension of the concept of reverse CG rank to the case of split inequalities. It can be proved that in dimension 2 the split rank of every rational polyhedron is at most 2. That is, the \emph{reverse split rank} (defined in the obvious way) is bounded by a constant in dimension 2, while recall that this is not true for the reverse CG rank, see Section \ref{sec:intro}. However, one can prove that already in dimension 3 a constant bound does not exists, as implied by Lemma \ref{lem:split}. We remark that a characterization of integral polyhedra with finite reverse split rank is given in~\cite{split}. However,~\cite{split} postdated the current paper, and in fact builds on it.

\begin{lem}\label{lem:split}
Let $T\subseteq \R^3$ be the triangle $\conv\{(0,0,0),(0,2,0),(2,0,0)\}$. The reverse split rank of $T$ is $+\infty$.
\end{lem}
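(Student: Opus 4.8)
The plan is to exhibit, for every $\alpha \in \Z_+$, a rational polyhedron $Q_\alpha \subseteq \R^3$ whose integer hull (with respect to split cuts) requires at least $\alpha$ rounds of the split closure operator, while $(Q_\alpha)_I = T$. Since $T$ lies in the plane $\{x_3 = 0\}$ and is the lattice-free triangle $\conv\{(0,0,0),(0,2,0),(2,0,0)\}$ with no integer point in its relative interior, the natural idea is to mimic the construction from Section~\ref{sec:suff}: pick a direction $v$ along which $T$ can be stretched without absorbing new integer points, and push a far-away apex along $v$. Here the obvious choice is to work in the plane $\{x_3 = 0\}$ itself; the segment between the midpoints of two edges of $T$, extended, stays relatively lattice-free, so the same "thin triangle" phenomenon that gives infinite reverse CG rank in dimension $2$ should be recoverable. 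Concretely, set $Q_\alpha = \conv\big(\{(0,0,0),(0,2,0),(2,0,0)\} \cup \{(\alpha, \tfrac12, 0)\}\big)$ or a similar stretched simplex, and check directly that $Q_\alpha \cap \Z^3 = T \cap \Z^3$.

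The first main step is to verify that $Q_\alpha$ is a relaxation of $T$: one must show that the stretched region $\conv(Q_\alpha) \setminus T$ contains no integer points. Since everything lies in the plane $x_3 = 0$, this reduces to a two-dimensional lattice-point argument — the stretched triangle lives inside a thin strip around a line of half-integer $x_2$-coordinate (or whatever the precise geometry dictates), and a direct inspection shows the only lattice points it meets are the three vertices of $T$. The second and harder step is the lower bound on the split rank of $Q_\alpha$. Unlike for CG cuts, the paper has not provided a ready-made lower-bound lemma for split rank (Lemma~\ref{lem:lower} is specifically about CG rank), so I would need an analogue: the split closure of a polyhedron $Q$ is the intersection of $Q$ with all split cuts, and one must argue that a point like $(\alpha, \tfrac12, 0)$ (or a rescaled version) survives $\Omega(\alpha)$ rounds. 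The standard tool here is that split cuts are "weak" along directions nearly parallel to the split direction, combined with the fact that in the relevant $2$-dimensional cross-section the thin triangle has split rank growing linearly in $\alpha$ — this is essentially the folklore statement that the split rank of the triangle $\conv\{(0,0),(0,1),(\alpha,\tfrac12)\}$ in $\R^2$ is $\Theta(\alpha)$, lifted to $\R^3$.

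I expect the main obstacle to be precisely this lower bound on the split rank, because the excerpt gives us no packaged lemma and the self-contained argument requires either (a) an explicit Chv\'atal--Cook--Hartmann–style invariant for split closures, or (b) a reduction to the known $2$-dimensional behaviour together with a projection/lifting argument showing that split cuts valid for $Q_\alpha$ restrict to split cuts valid for the planar cross-section. Approach (b) seems cleanest: if $\pi\colon \R^3 \to \R^2$ forgets a coordinate, then a split cut for $Q_\alpha$ in $\R^3$ either is "vertical" (constant along the lifting direction), in which case it corresponds to a split cut in $\R^2$, or it is genuinely three-dimensional, in which case one argues it cannot cut off the apex faster than the planar cuts do — here the hypothesis that $T$ is flat, i.e. contained in $\{x_3=0\}$, is what forces the relevant splits to behave two-dimensionally. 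Once the planar split rank lower bound $\lceil \alpha/2 \rceil$ (say) is invoked, letting $\alpha \to \infty$ yields reverse split rank $+\infty$, completing the proof.
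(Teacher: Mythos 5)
Your proposal fails on both of its main steps, and the failures are not repairable within your framework. First, the construction: you keep the apex in the plane $\{x_3=0\}$ and stretch $T$ along an in-plane direction, e.g.\ $Q_\alpha=\conv\{(0,0,0),(0,2,0),(2,0,0),(\alpha,\tfrac12,0)\}$. This is not a relaxation of $T$: the cross-section of $Q_\alpha$ at $x_1=2$ is $\{2\}\times[0,\,2-3/\alpha]\times\{0\}$, so $(2,1,0)\in Q_\alpha\setminus T$ for every $\alpha\ge 3$. This is no accident of the particular direction you chose: $\conv\{(0,0),(0,2),(2,0)\}$ is a \emph{maximal} lattice-free triangle in the plane, so $T+\linhull v$ picks up integer points in its relative interior for every nonzero in-plane $v$; there is simply no ``thin triangle'' relaxation of $T$ living inside $\{x_3=0\}$. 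Second, and more fundamentally, your lower-bound engine is false: the split rank of $\conv\{(0,0),(0,1),(\alpha,\tfrac12)\}\subseteq\R^2$ is $1$, not $\Theta(\alpha)$ --- the single split $0<x_2<1$ reduces it to its integer hull, since the parts with $x_2\le 0$ and $x_2\ge 1$ are just the two integer vertices. Indeed the paper states explicitly that in dimension $2$ the split rank of every rational polyhedron is at most $2$; this is precisely why the CG-rank folklore example does \emph{not} transfer to split cuts and why the lemma needs dimension $3$. Consequently your plan (b), reducing to ``known $2$-dimensional behaviour,'' can only ever yield a bound of $2$.

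The paper's proof is genuinely three-dimensional in a way your proposal never becomes: the relaxations are $T^h=\conv(T,x^h)$ with the apex $x^h=(\tfrac12,\tfrac12,h)$ lifted \emph{out} of the plane, vertically above an interior point of the maximal lattice-free triangle. The lower bound is then only logarithmic, not linear: one shows that for $h\ge 4/3$ no single split $\{\beta<\alpha x<\beta+1\}$ can cut off both $x^h$ and $x^{h/4}$ (splits with $\alpha_3=0$ are excluded by an integrality argument using the auxiliary points $(1,\tfrac12,\tfrac h2)$ and $(\tfrac12,1,\tfrac h2)$, and splits with $\alpha_3\neq 0$ are excluded because the split has width less than $1$ while $\tfrac34 h|\alpha_3|\ge 1$), whence $SC(T^h)\supseteq T^{h/4}$, $SC^k(T^h)\supseteq T^{h/4^k}$, and $\Omega(\log h)$ rounds are needed. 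If you want to salvage your write-up, you must replace both your relaxation and your invariant with something of this kind; no Chv\'atal--Cook--Hartmann-style linear-translate argument along an in-plane direction can work for split cuts.
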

\begin{proof}
For $h \in \Q_+$, let {$x^h$}$=(\frac{1}{2},\frac{1}{2},h)\in \R^3$ and $T^h=\conv(T;x^h)$. Note that, for each $h \in \Q_+$, $(T^h)_I=T$.
We assume that the reader is familiar with basic definitions on split cuts, which can be found e.g. in~\cite{CKS}.

For a polyhedron $P$, we denote by $SC(P)$ its split closure and by $SC^k(P)$ its $k-$th split closure. For an inequality $\alpha x \leq \beta$, we let $SC_{\alpha,\beta}^{\leq}(P)$ be the set of points in $P$ that satisfy $\alpha x \leq \beta$, and $SC_{\alpha,\beta}^{\geq}(P)$ the set of points in $P$ that satisfy $\alpha x \geq \beta+1$. Also, we let $SC_{\alpha,\beta}(P)=\conv\{SC_{\alpha,\beta}^{\leq}(P),SC_{\alpha,\beta}^{\geq}(P)\}$. We say that a split $S=\{x : \beta < \alpha x < \beta +1\}$ \emph{cuts off} a point $\bar x$ if $\bar x \notin SC_{\alpha,\beta}(P)$. Note that if $S$ cuts off a point $\bar x$, necessarily $\beta < \alpha \bar x < \beta + 1$.

We prove that, for $h \geq 4/3$, there exists no split $\{x:\beta < \alpha x < \beta +1\}$ that cuts off both $x^h$ and $x^{h/4}$ from $T^h$, i.e. such that $\{x^h, x^{h/4}\} \cap SC_{\alpha, \beta} (T^h) = \emptyset$. This implies that
for $h \geq \frac{4}{3}$, $x^{h/4}\in SC(T^h)$. Thus, $SC(T^h)\supseteq T^{h/4}$ and consequently, $SC^k(T)\supseteq T^{h/4^k}$. Hence, $\Omega(\log(h))$ rounds of the split closure are needed to obtain the integer hull starting from $T^h$, and the statement follows.

Fix a rational $h\geq 4/3$, and a split $S=\{x: \beta < \alpha x < \beta +1\}$, with $\alpha=(\alpha_1,\alpha_2,\alpha_3) \in \Z^3$, $\beta \in \Z$. We first deal with splits where $\alpha_3=0$. Note that $\bar x=(1,\frac{1}{2},\frac{h}{2})\in T^h$ (resp. $\tilde x=(\frac{1}{2},1,\frac{h}{2}) \in T^h$), since this point is in the segment between $(\frac{3}{2},\frac{1}{2},0)$ and $x^h$ (resp. between ($\frac{1}{2},\frac{3}{2},0)$ and $x^h$). If $S$ cuts off $x^{h/4}$, then $\bar x, \tilde x \in S$, since $x^{h/4}$ is in the segment between $\bar x$ and $(0,\frac{1}{2},0)$ (resp. $\tilde x$ and $(\frac{1}{2},0,0)$) and the latter belongs to $T$ (hence also to $SC(T^h)$). Moreover, the split also needs to cut off $x^{h/4}$, hence $x^{h/4}\in S$. Thus, $\alpha_1,\alpha_2,\beta$ satisfy
$$ \beta < \alpha_1 + \frac{1}{2} \alpha_2, \frac{1}{2} \alpha_1 + \alpha_2,  \frac{1}{2} \alpha_1 + \frac{1}{2}\alpha_2 < \beta +1,$$
\noindent which, since $\alpha_1$, $\alpha_2$, and $\beta$ can be assumed to be integer, imply
$$ 2 \alpha_1 + \alpha_2 = \alpha_1 + 2 \alpha_2 = \alpha_1 + \alpha_2 = 2\beta +1.$$
\noindent The unique solution to the system above is $\alpha_1=0$, $\alpha_2=0$, $\beta=-\frac{1}{2}$, which is not an integral vector.

Thus, we can assume $\alpha_3 \neq 0$. As the split needs to cut off both $x^h$ and $x^{h/4}$, we have
$$ \beta < \frac{1}{2}\alpha_1 + \frac{1}{2} \alpha_2 + h \cdot \alpha_3,  \frac{1}{2}\alpha_1 + \frac{1}{2} \alpha_2 + \frac{h}{4}  \cdot \alpha_3 < \beta +1.$$
\noindent Setting $\overline \beta=\beta - \frac{1}{2}\alpha_1 - \frac{1}{2} \alpha_2$, it follows $ \overline \beta < h \cdot \alpha_3,  \frac{h}{4} \cdot \alpha_3 < \overline \beta +1$. Hence, using the fact that $|\alpha_3|\geq 1$, we obtain $1>|h\alpha_3 - \frac{h}{4}\alpha_3| = \frac{3}{4}h|\alpha_3|\geq \frac{3}{4}h\geq 1$, a contradiction.
\end{proof}

%Note $P'+\linhull v$ is relatively lattice-free for some $v \in \Z^{n+1}$ if and only if $P$ is relatively lattice-free. This reduces $\IPRI$ to $\vRLF$, showing that $\vRLF$ is $\np$-hard. Now let $P$ be a no-instance to $\IPRI$, that is, $P$ contains no integer point in its relative interior. If $\IPRI \in \np$, then there is a polynomial-size certificate showing that $P'$ is relatively lattice-free. This is also a polynomial-size certificate showing that $P$ is relatively lattice-free, hence implying that $\IPRI$ is in $\conp$.\hspace*{\stretch{1}}

%\hspace*{\stretch{1}}

\subsection*{Acknowledgments}

We thank Gennadiy Averkov for pointing out the connection between Theorem \ref{th:main} and \cite{nizi}, that lead to Lemma \ref{lem:averkov}.

\bibliographystyle{plain.bst}

\end{document}